\newtheorem{lm}{Lemma}[section]
\newtheorem{prop}[lm]{Proposition}
\newtheorem{coro}[lm]{Corollary}
\newtheorem{teo}[lm]{Theorem}
\theoremstyle{definition}
\newtheorem{oss}[lm]{Remark}
\newtheorem{defi}[lm]{Definition}
\newtheorem*{ack}{Acknowledgements}
\numberwithin{equation}{section}
\author[Brasco]{Lorenzo Brasco}
\address[L.\ Brasco]{Dipartimento di Matematica e Informatica
\newline\indent
Universit\`a degli Studi di Ferrara
\newline\indent
Via Machiavelli 35, 44121 Ferrara, Italy}
\email{lorenzo.brasco@unife.it}
\author[Lindgren]{Erik Lindgren}
\address[E. Lindgren]{Department of Mathematics, Uppsala University
\newline\indent
Box 480\\
751 06 Uppsala, Sweden}
\email{erik.lindgren@math.uu.se}
\author[Str\"omqvist]{Martin Str\"omqvist}
\address[M. Str\"omqvist]{\,}
\email{stromqv@gmail.com}
\title[Nonlinear fractional diffusion]{Continuity of solutions to a nonlinear\\ fractional diffusion equation}
\subjclass[2010]{35K55, 35K65, 35R11}
\keywords{Nonlocal parabolic equations, H\"older continuity, fractional $p-$Laplacian.}
\begin{document}
\begin{abstract} 
We study a parabolic equation for the fractional $p-$Laplacian of order $s$, for $p\ge 2$ and $0<s<1$.
We provide space-time H\"older estimates for weak solutions, with explicit exponents. The proofs are based on iterated discrete differentiation of the equation in the spirit of J. Moser.
\end{abstract}
\maketitle

\begin{center}
\begin{minipage}{8cm}
\small
\tableofcontents
\end{minipage}
\end{center}

\section{Introduction}
\subsection{The problem}
In this paper, we study the regularity of {\it weak solutions} to the nonlinear and nonlocal parabolic equation
\begin{equation}\label{MainPDE}
\partial_t u+(-\Delta_p)^s u=0,
\end{equation}
where $2\le p<\infty$, $0<s<1$ and $(-\Delta_p)^s$ is the {\it fractional $p$-Laplacian of order $s$}, i.e. the operator formally defined by
\begin{equation}\label{fracplap}
(-\Delta_p)^s u\, (x):=2\, \mathrm{P.V.} \int_{\mathbb{R}^N}\frac{|u(x)-u(x+h)|^{p-2}(u(x)-u(x+h))}{|h|^{N+s\,p}}\, dh.
\end{equation}
Here $\mathrm{P.V.}$ denotes the principal value in Cauchy sense. The operator $(-\Delta_p)^s$ arises as the first variation of the Sobolev-Slobodecki\u{\i} seminorm (see Section \ref{sec:not})
\[
u\mapsto \iint_{\mathbb{R}^N\times \mathbb{R}^N} \frac{|u(x)-u(x)|^p}{|x-y|^{N+s\,p}}\,dx\,dy.
\]
This operator can be seen as a nonlocal (or fractional) version of the $p-$Laplace operator, 
\[
-\Delta_pu=-\mathrm{div\,}(|\nabla u|^{p-2}\nabla u),
\]
since, as $s$ goes to $1$, solutions of $(-\Delta_p)^s u =0$ converge to solutions of $-\Delta_p u =0$, once suitably rescaled. See for instance \cite[Section 1.4]{Brolin} and \cite{IN}.
\begin{oss}[Homogeneity and scalings]
\label{oss:scalings}
It is important to notice that equation \eqref{MainPDE} {\it is not homogeneous}, i.e. if $u$ is a solution, then $\lambda\,u$ does not solve the same equation. Rather, it solves
\[
\partial_t u+\lambda^{2-p}\,(-\Delta_p)^s u=0.
\] 
On the other hand, solutions are invariant with respect to the natural scaling $(x,t)\mapsto (\lambda\,x,\lambda^{s\,p}\,t)$, for any $\lambda>0$. In other words, if $u$ is a solution of \eqref{MainPDE}, then the rescaled function
\[
u_\lambda(x,t)=u\left(\lambda\,x,\lambda^{s\,p}\,t\right),
\]
is still a solution. By combining the last two facts, we also get that 
\[
u_{\lambda,\mu}=\mu\,u\left(\lambda\,x,\mu^{p-2}\,\lambda^{s\,p}\,t\right),\qquad \mbox{ for } \lambda,\mu>0,
\]
still solves \eqref{MainPDE}. We will make a repeated use of this simple fact.
\end{oss}
In this paper, we are concerned with the H\"older regularity for weak solutions of \eqref{MainPDE}. More precisely, we prove that local weak solutions (see Definition \ref{locweak} below) are locally $\delta-$H\"older continuous in space and $\gamma-$H\"older continuous in time, whenever 
\[
0<\delta <\Theta(s,p):=\left\{\begin{array}{rl}
\dfrac{s\,p}{p-1},& \mbox{ if } s<\dfrac{p-1}{p},\\
&\\
1,& \mbox{ if } s\ge \dfrac{p-1}{p},
\end{array}
\right.\quad \mbox{ and }\quad 0<\gamma< \Gamma(s,p):=\left\{\begin{array}{rl}
1,& \mbox{ if } s<\dfrac{p-1}{p},\\
&\\
\dfrac{1}{s\,p-(p-2)},& \mbox{ if } s\ge \dfrac{p-1}{p}.
\end{array}
\right.
\] 
To the best of our knowledge, our result is the first pointwise continuity estimate for solutions of this equation. 

\subsection{Background and recent developments}
In recent years there has been a surge of interest around the operator \eqref{fracplap}, after its introduction in \cite{IN}. In particular, equation \eqref{MainPDE} has been studied in \cite{abd, MRT, puh, str, vasquez} and \cite{warma}. References \cite{puh}, \cite{MRT} and \cite{vasquez} dealt with existence and uniqueness of solutions, together with their long time asymptotic behaviour. Similar properties for \eqref{MainPDE} with a general right-hand side in place of $0$ are studied in \cite{abd}. In \cite{warma}, some regularity of the semigroup operator generated by $(-\Delta_p)^s$ was studied. In \cite{str}, the local boundedness of weak solutions of \eqref{MainPDE} is proved.
\par
Recently, in \cite{HL}, a weaker pointwise regularity result was obtained for {\it viscosity solutions} of the doubly nonlinear equation
\begin{equation}
\label{stromberg}
|\partial_t u|^{p-2}\,\partial_t u+(-\Delta_p)^s u=0, 
\end{equation}
by using completely different methods. This equation and its large time behavior is related to the eigenvalue problem for the fractional $p-$Laplacian. A crucial difference between this equation and \eqref{MainPDE}, is that the former is homogeneous, a feature which is not shared by our equation, as already observed in Remark \ref{oss:scalings}.
Moreover, the nonlinearity in the time derivative in \eqref{stromberg} makes the notion of weak solutions less useful. It is not clear whether the methods in \cite{HL} can be adapted to the present situation or not.
\par
In the linear or non-degenerate case, corresponding to $p=2$, the literature on regularity is vast. We mention only a fraction of it, namely  \cite{CV, LD, LD2, Sil10, Sil12} and \cite{str2}. However, we point out that neither of these results apply to our setting.
\par
The stationary version of \eqref{MainPDE}, i.e., 
$$
(-\Delta_p)^s u=0,
$$
has attracted a lot of attention, as well. The regularity of solutions has been studied for instance in \cite{Brolin, bralinschi, Co, DKP, DKP2, IMS, IMS2, KKP, KMS, KMS2, Li, Sc} and \cite{warma}. In particular, the regularity result proved in the present paper can be seen as the parabolic version of that obtained by the first two authors and Schikorra in \cite{bralinschi} for the stationary equation. 
\par
The local counterpart of \eqref{MainPDE} is the parabolic equation for the $p-$Laplacian  
\[
\partial_t u-\Delta_p u=0.
\]
This has been intensively studied and only in the last decades has its theory reached a rather complete state. We refer to \cite{DiBbook} and \cite{DGV} for a complete account on the regularity results for this equation and some of its generalizations.  
At present, the best local regularity known is spatial $C^{1,\alpha}-$regularity for some $\alpha>0$ (see \cite[Chapter IX]{DiBbook}) and $C^{0,1/2}-$regularity in time (see \cite[Theorem 2.3]{Bo}). None of these exponents is known to be sharp. However, due to the explicit solution
\[
u(x,t)=N\,t-\frac{p-1}{p}|x|^\frac{p}{p-1}, 
\]
it is clear that solutions cannot be better than $C^{1,1/(p-1)}$ in space. 

\subsection{Main result}
The main result of our paper is the following H\"older regularity for local weak solutions of \eqref{MainPDE}. 
Here, we use the following notation for parabolic cylinders
\[
Q_{R,r}(x_0,t_0)=B_R(x_0)\times(t_0-r,t_0],
\]
with $B_r(x_0)$ denoting the $N-$dimensional ball of radius $r$ centered at the point $x_0$. For the precise definition of {\it local weak solution}, as well as of the spaces $C^\delta_{x,\rm loc}(\Omega\times I)$ and 
$C^\gamma_{t,\rm loc}(\Omega\times I)$, we refer the reader to Sections \ref{sec:local} and \ref{pBspaces}, respectively
\begin{teo}\label{thm1}
Let $\Omega\subset\mathbb{R}^N$ be a bounded and open set, $I=(t_0,t_1]$, $p\geq 2$ and $0<s<1$. Suppose $u$
is a local weak solution of
\[
u_t+(-\Delta_p)^s u=0\qquad \mbox{ in }\Omega\times I,
\]
such that 
\begin{equation}
\label{finitesup}
u\in L^{\infty}_{\rm loc}(I;L^\infty(\mathbb{R}^N)).
\end{equation}
Define the exponents
\begin{equation}
\label{exponents}
\Theta(s,p):=\left\{\begin{array}{rl}
\dfrac{s\,p}{p-1},& \mbox{ if } s<\dfrac{p-1}{p},\\
&\\
1,& \mbox{ if } s\ge \dfrac{p-1}{p},
\end{array}
\right.\quad \mbox{ and }\quad \Gamma(s,p):=\left\{\begin{array}{rl}
1,& \mbox{ if } s<\dfrac{p-1}{p},\\
&\\
\dfrac{1}{s\,p-(p-2)},& \mbox{ if } s\ge \dfrac{p-1}{p}.
\end{array}
\right.
\end{equation}
Then 
\[
u\in C^\delta_{x,\rm loc}(\Omega\times I)\cap C^\gamma_{t,\rm loc}(\Omega\times I),\qquad \mbox{ for every }0<\delta<\Theta(s,p) \ \mbox{ and } \ 0<\gamma<\Gamma(s,p).
\] 
More precisely, for every $0<\delta<\Theta(s,p)$, $0<\gamma<\Gamma(s,p)$, $R>0$, $x_0\in\Omega$ and $T_0$ such that 
\[
Q_{2R,2R^{s\,p}}(x_0,T_0)\Subset\Omega\times (t_0,t_1],
\] 
there exists a constant $C=C(N,s,p,\delta, \gamma)>0$ such that
\begin{equation}
\label{apriori_spacetime}
\begin{split}
|u(x_1,\tau_1)-u(x_2,\tau_2)|&\leq C\,(\|u\|_{L^\infty(Q_{\infty,R^{s\,p}}(x_0,T_0))}+1)\, \left(\frac{|x_1-x_2|}{R}\right)^\delta\\
&+C\,(\|u\|_{L^\infty(Q_{\infty,R^{s\,p}}(x_0,T_0))}+1)^{\gamma\,(p-2)+1}\,\left(\frac{|\tau_1-\tau_2|}{R^{s\,p}}\right)^\gamma.
\end{split}
\end{equation}
for any $(x_1,\tau_1),\,(x_2,\tau_2)\in Q_{R/4,R^{s\,p}/4}(x_0,T_0)$. 
\end{teo}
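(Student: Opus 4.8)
The plan is to follow the Moser-type iteration scheme alluded to in the abstract, combining discrete differentiation estimates in space with a Caccioppoli-type argument, and then to transfer the spatial regularity to time regularity via the equation. The structure would be: first, establish an improvement-of-integrability / energy estimate for finite differences $\delta_h u(x,t) = u(x+h,t)-u(x,t)$ of weak solutions, showing that membership of $u$ in a fractional Sobolev space $W^{\sigma,q}$ of order $\sigma$ (locally, in space, uniformly in time on a slightly smaller cylinder) self-improves to order $\sigma' > \sigma$, as long as $\sigma' < \Theta(s,p)$. The gain at each step of the iteration comes from testing the equation for $\delta_h u$ against a suitable power of $\delta_h u$ times a cutoff, using the monotonicity and the quantitative $p$-growth of the kernel in $(-\Delta_p)^s$; for $p \ge 2$ one has the standard inequality $|a|^{p-2}a - |b|^{p-2}b)(a-b) \gtrsim |a-b|^p$, which produces a $p$-energy term for the difference quotient. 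Iterating this finitely many (or countably many, with summable gains) times, and then using a fractional Sobolev/Morrey embedding, yields $u \in C^{\delta}_{x,\mathrm{loc}}$ for every $\delta < \Theta(s,p)$, with the quantitative bound in terms of $\|u\|_{L^\infty}+1$ dictated by Remark \ref{oss:scalings} and the non-homogeneity of the equation.

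Once the spatial Hölder continuity is in hand, the time regularity follows from a comparison/mollification argument exploiting the equation itself: fixing $x$ and two times $\tau_1 < \tau_2$, one writes $u(x,\tau_2)-u(x,\tau_1) = -\int_{\tau_1}^{\tau_2} (-\Delta_p)^s u(x,\tau)\,d\tau$ in a weak (tested) sense, localizes with a spatial cutoff supported in a ball of radius $\rho$, and splits the nonlocal operator into a near part (controlled by the spatial $C^\delta$ seminorm on the ball, producing a factor $\rho^{\delta(p-1) - sp}$ after integrating the kernel) and a tail part (controlled by $\|u\|_{L^\infty}$, producing $\rho^{-sp}$). Balancing these against the "mass" term $\rho^N$ and optimizing $\rho$ as a power of $|\tau_2-\tau_1|$ gives the time exponent $\gamma$; the arithmetic of this optimization is exactly what produces $\Gamma(s,p) = 1/(sp-(p-2))$ in the regime $s \ge (p-1)/p$ (where the spatial exponent saturates at $1$ so $\delta(p-1)$ can be taken close to $p-1$ and the relevant power of $\rho$ is $p-1-sp = (p-2)-sp$, which is negative precisely when $s > (p-1)/p$), and $\gamma$ close to $1$ in the complementary regime. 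The extra power $\gamma(p-2)+1$ on $\|u\|_{L^\infty}+1$ in \eqref{apriori_spacetime} is the bookkeeping of the non-homogeneous scaling $u_{\lambda,\mu} = \mu\, u(\lambda x, \mu^{p-2}\lambda^{sp} t)$, used to reduce to the normalized case $\|u\|_{L^\infty} \le 1$.

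I would organize the write-up by first proving the a priori estimate \eqref{apriori_spacetime} for solutions that are already known to be smooth enough to justify all the test-function manipulations (or for solutions of a regularized equation), and then removing this extra assumption by approximation, using the stability of the class of weak solutions and the fact that the final estimate depends only on $\|u\|_{L^\infty}$ and the structural constants. The cylinders $Q_{2R,2R^{sp}}$ versus $Q_{R/4,R^{sp}/4}$ leave room for the cutoff functions in both space and time at each iteration step, and the intermediate cylinder $Q_{\infty,R^{sp}}$ in the $L^\infty$ norm reflects that the nonlocal tail sees all of $\mathbb{R}^N$ but only the relevant time slab.

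The main obstacle I expect is controlling the nonlocal tail terms uniformly through the infinitely many steps of the Moser iteration: each differentiation of the equation in the direction $h$ generates tail contributions $\int_{|y| > 1} |u(x+y,t)|^{p-1}|y|^{-N-sp}\,dy$-type integrals that must be reabsorbed without degrading the exponent, and the bookkeeping of how the constants and the admissible radii shrink at each step — while keeping the gain in the differentiability order summable up to $\Theta(s,p)$ but no further — is delicate. A secondary difficulty is that, unlike the local $p$-Laplacian, one cannot differentiate the equation classically; all "differentiations" are discrete, so every step requires a careful discrete Leibniz rule and a discrete integration-by-parts on the double integral defining the energy, with error terms that are genuinely nonlocal and must be estimated by fractional Sobolev interpolation rather than pointwise.
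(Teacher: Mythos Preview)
Your spatial regularity strategy matches the paper's: discrete differentiation, testing against $J_{\beta+1}(\delta_h u)\,\eta^p$, iterating an integrability/differentiability gain, and concluding via a Besov--Morrey embedding. The paper organizes this in two stages (first $C^{\delta}$ for $\delta<s$ via Proposition~\ref{prop:improve}, then up to $\Theta(s,p)$ via Proposition~\ref{prop:improve2}, where the first stage is used to control the cross term $|u(x)-u(y)|^{p-2}|\eta(x)-\eta(y)|^2$), and works with \emph{second}-order differences $\delta_h^2 u$ throughout the iteration, but these are refinements of the same idea.

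There is, however, a genuine gap in your time-regularity step. If the tail is estimated only by $\|u\|_{L^\infty}$, it contributes $\theta\,\rho^{-sp}$ (after dividing out the volume $\rho^N$), which \emph{dominates} the near part $\theta\,\rho^{\delta(p-1)-sp}$ for small $\rho$. Balancing $\rho^\delta$ against $\theta\,\rho^{-sp}$ then gives $\theta=\rho^{\delta+sp}$, hence $\gamma=\delta/(\delta+sp)$, which tends at best to $1/(1+sp)$ as $\delta\to 1$ --- strictly worse than $\Gamma(s,p)=1/(sp-(p-2))$ for $p>2$. (Your line ``$p-1-sp=(p-2)-sp$'' is not an identity, and the optimization of the terms you wrote does not reproduce $\Gamma(s,p)$.) The fix, which the paper uses in Proposition~\ref{prop_time}, is to observe that local $C^\delta$ regularity on $B_{1/2}$ combined with the global $L^\infty$ bound yields $|u(x,t)-u(y,t)|\le C|x-y|^\delta$ for \emph{all} $y\in\mathbb{R}^N$ (for $|y|\ge 1/2$ this is trivial since $|x-y|\ge c$). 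With this, the tail integral scales exactly like the near part, namely $\theta\,\rho^{\delta(p-1)-sp}$, and the balance $\rho^\delta \sim \theta\,\rho^{\delta(p-1)-sp}$ gives $\theta=\rho^{\,sp-\delta(p-2)}$ and the claimed $\gamma=\big(sp/\delta-(p-2)\big)^{-1}$.

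Two further points where the paper differs from your outline. First, it does not try to make sense of $u(x,\tau_2)-u(x,\tau_1)$ pointwise; instead it runs a Campanato argument, estimating $\fint_{Q_{r,\theta}}|u-\overline{u}_{r,\theta}|$ and invoking the metric Campanato characterization of H\"older spaces with the anisotropic distance $|x-y|+|\tau_1-\tau_2|^{1/(sp-\delta(p-2))}$. This is more robust for weak solutions. Second, rather than proving the estimate first for smooth solutions and then approximating, the paper justifies the nonlinear test functions directly by time-mollification of the weak formulation (Lemma~\ref{Mlemreg}); setting up an approximation scheme for this nonlocal, non-homogeneous equation would itself require nontrivial work.
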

\begin{oss}[Comment on the time regularity] The regularity in time is almost sharp for $s\,p\leq (p-1)$. Indeed, our result in this case gives H\"older continuity for any exponent less than 1. The following example from \cite{LD2} shows that solutions are not $C^1$ in time in general. Let  
$$
v(x,t)=\left\{\begin{array}{rl} 0,& \mbox{ if }t<-1/2,\\
C\,(1/2+t)+1_{B_3\setminus B_2}(x) ,& \mbox{ if }t\geq -1/2,\end{array}\right.
$$
where $C\neq 0$ is chosen so that $v$ is a local weak subsolution (see Definition \ref{locweak}) in $B_1\times (-1,0]$. Then, if $u$ is the unique solution (given by Theorem \ref{thm:exunsol}) of 
\[
 \left\{\begin{array}{rcll}
\partial_tu + (-\Delta_p)^su&=&0,&\mbox{ in } B_1\times (-1,0],\\
u&=&v,& \mbox{ on }(\mathbb{R}^N\setminus B_1)\times (-1,0],\\
u(\cdot,0) &=& 0,&\mbox{ on }\Omega, 
\end{array}\right.
\]
by Proposition \ref{prop:subcomp} we get $u\geq v$ in $B_1\times (-1,0]$. Moreover, by Proposition \ref{prop:linftyglobal}, $u=0$ in $B_1\times (-1,1/2)$. Therefore, 
$$
u(x,-1/2+h)-u(x,-1/2-h)\geq C\,h, 
$$
for $h>0$ and $x\in B_1$. Hence, $u$ cannot have a continuous time derivative.
\end{oss}
\begin{oss}[Comments on the assumption]
We have chosen to assume the global boundedness \eqref{finitesup} of our weak solutions, in order to simplify the presentation.
Actually, the estimate \eqref{apriori_spacetime} could be proved under the weaker assumption
\begin{equation}\label{localbound}
u\in L^\infty_{\rm loc}(I;L^\infty_{\rm loc}(\Omega)),
\end{equation}
and	
\begin{equation}\label{thesuptail}
u\in L^\infty_{\rm loc}(I;L^{p-1}_{s\,p}(\mathbb{R}^N)),
\end{equation}
where the {\it tail space} $L^{p-1}_{s\,p}(\mathbb{R}^N)$ is defined by
\[
L^{p-1}_{s\,p}(\mathbb{R}^N)=\left\{u \in L^{p-1}_{\rm loc}(\mathbb{R}^N)\, :\, \int_{\mathbb{R}^N} \frac{|u|^{p-1}}{1+|x|^{N+s\,p}}\,dx<+\infty\right\}.
\]
We point out that by \cite[Lemma 2.6]{str}, condition \eqref{thesuptail} is a natural one in order to guarantee the local boundedness \eqref{localbound}.
However, it is not known apriori if the quantity \eqref{thesuptail} is finite whenever $u$ is a weak solution. Indeed, even if $u$ solves the initial boundary value problem
\[
 \left\{\begin{array}{rcll}
\partial_tu + (-\Delta_p)^su&=&0,&\mbox{ in }\Omega\times I,\\
u&=&g,& \mbox{ on }(\mathbb{R}^N\setminus\Omega)\times I,\\
u &=& u_0,&\mbox{ on }\Omega\times\{t=t_0\}, 
\end{array}\right.
\]
with the boundary data $g$ satisfying 
\[
g\in L^\infty_{\rm loc}(I;L^{p-1}_{s\,p}(\mathbb{R}^N)),
\]
it is not evident that this is sufficient to entail \eqref{thesuptail}.
For this reason, and to not overburden an already technical proof, we have chosen to assume the simpler condition \eqref{finitesup}. For completeness, in Appendix \ref{sec:A} we give some sufficient conditions assuring that our weak solutions verify \eqref{finitesup}, see Corollary \ref{coro:linftyglobal} below.
\end{oss}

\subsection{Main ideas of the paper} The idea we use to prove Theorem \ref{thm1} is very similar to the method employed in \cite{bralinschi} for the elliptic case: we differentiate equation \eqref{MainPDE} in a discrete sense and then test the differentiated equation against functions of the form
$$
\left|\frac{\delta_h u}{|h|^\vartheta}\right|^{\beta-1}\,\frac{\delta_h u}{|h|^\vartheta},\qquad \mbox{ where } \delta_h u(x,t):=u(x+h,t)-u(x,t).
$$
For suitable choices of $\vartheta>0$ and $\beta\ge 1$, this gives an integrability gain (see Proposition \ref{prop:improve}) of the form
\begin{equation}
\label{imp}
\int_{-1+\mu}^T\left\|\frac{\delta^2_h u(x,t)}{|h|^{s}}\right\|_{L^{q+1}(B_{1/2})}^{q+1}dt +\left\|\frac{\delta_h u(\cdot,T)}{|h|^{\frac{(q+2-p)\,s}{q+3-p}}}\right\|_{L^{q+3-p}(B_{1/2})}^{q+3-p}\lesssim \int_{-1}^T\left\|\frac{\delta^2_h u(x,t)}{|h|^s}\right\|_{L^{q}(B_1)}^q dt,
\end{equation}
for $-1/2\le T\le 0$ and an arbitrary $\mu>0$.
By first fixing $T=0$ and ignoring the second term in the left-hand side of \eqref{imp}, this can be iterated finitely many times in order to obtain
\[
\frac{\delta_h u}{|h|^s}\in L^q([-1/2,0];L^q_{\rm loc}),\qquad \mbox{ for every } q<\infty, \mbox{ uniformly in } |h|\ll 1.
\]
We can then use the second term in the left-hand side of \eqref{imp}, so to get
$$
\frac{\delta_h u(\cdot,T)}{|h|^s}\in L^q_{\rm loc},\qquad \mbox{ for every } q<\infty, \mbox{ uniformly in } |h|\ll 1 \mbox{ and } -\frac{1}{2}\le T\le 0.
$$
Thus, by using a Morrey-type embedding result, we can conclude that $u\in C_\text{loc}^\delta$ spatially for any $0<\delta<s$. 
\par
After this, we prove Proposition \ref{prop:improve2}, which comprises a refined version of the scheme \eqref{imp}. Namely, an estimate of the form
\begin{equation}
\label{imperik}
\int_{-1+\mu}^T\left\|\frac{\delta^2_h u(x,t)}{|h|^{\frac{1+s\,p+\vartheta\,\beta}{\beta-1+p}}}\right\|_{L^{\beta-1+p}(B_{1/2})}^{\beta-1+p}dt +\left\|\frac{\delta_h u(\cdot,T)}{|h|^{\frac{1+\vartheta\beta}{\beta+1}}}\right\|_{L^{\beta+1}(B_{1/2})}^{\beta+1}\lesssim \int_{-1}^T\left\|\frac{\delta^2_h u(x,t)}{|h|^\frac{1+\vartheta\, \beta}{\beta}}\right\|_{L^{\beta}(B_1)}^\beta dt.
\end{equation}
Also \eqref{imperik} can be iterated, where now both the differentiability $\vartheta$ and the integrability $\beta$ change. The result is that
\[
u\in C_{\rm loc}^{\delta} \mbox{ spatially},\qquad \mbox{ for every } 0<\delta <\Theta(s,p),
\] 
again uniformly in time.
The last part of the paper, where we obtain the regularity in time, is quite standard for this kind of diffusion equations (see for example \cite[page 118]{CD}). It amounts to using the already established spatial regularity and the information given by the equation. However, due to the fractional character of the spatial part of our equation, some care is needed in order to properly handle the time regularity. In particular, we have to treat the cases
\[
s< \frac{p-1}{p}\qquad \mbox{ and }\qquad s\ge \frac{p-1}{p}, 
\]
separately. This is done in Proposition \ref{prop_time} and it yields the $\gamma-$H\"older continuity in time for any 
\[
\gamma = \frac{1}{\dfrac{s\,p}{\delta}-\,(p-2)},
\]
given that the solution is $\delta-$H\"older continuous in the $x$ variable. In particular, by the possible choice of $\delta$, this yields
that we may choose any $\gamma<\Gamma(s,p)$, where the latter exponent is the one defined in \eqref{exponents}.

\subsection{Plan of the paper}
The plan of the paper is as follows. In Section \ref{sec:prel}, we introduce the expedient spaces and notation used in this paper. In Section \ref{sec:weak}, we define local weak solutions and justify that we can insert certain test functions in the differentiated equation (see Lemma \ref{Mlemreg} below). This is followed by Section \ref{sec:almost}, where we prove that weak solutions are almost $s-$H\"older continuous in the spatial variable. In Section \ref{sec:higher}, we improve this result up to the exponent $\Theta(s,p)$ defined in \eqref{exponents}. This result is then used in Section \ref{sec:time}, where we prove the corresponding H\"older regularity in time. Finally, in Section \ref{sec:main} we prove our main theorem.
\par
The paper is complemented by an appendix, where for completeness we prove existence and uniqueness of weak solutions for the initial boundary value problem related to our equation. A comparison principle is also presented.

\begin{ack}
We thank Eleonora Cinti for drawing our attention on the papers \cite{LD, LD2}.
E.\,L.  is supported by the Swedish Research Council, grant no. 2012-3124 and 2017-03736. Part of this work has been done during a visit of L.\,B. to Uppsala and a visit of E.\,L. to Bologna and Ferrara. The paper has been finalized during the conference ``{\it Nonlinear averaging and PDEs\,}'', held in Levico Terme in June 2019.
The hosting institutions and the organizers are kindly acknowledged.
\end{ack}

\section{Preliminaries}\label{sec:prel}

\subsection{Notation}\label{sec:not} We denote by $B_r(x_0)$ the $N-$dimensional open ball of radius $r$ centered at the point $x_0$. The ball of radius $r$ centered at the origin is denoted by $B_r$. Its Lebesgue measure is given by
\[
|B_r(x_0)|=\omega_N\,r^N.
\]
We use the following notation for the parabolic cylinder
\[
Q_{R,r}(x_0,t_0)=B_R(x_0)\times(t_0-r,r].
\]
Again, when $x_0=0$ and $t_0=0$, we simply write $Q_{R,r}$.
\par
Let $1<p<\infty$, we denote by $p'=p/(p-1)$ the dual exponent of $p$. For every $\beta> 1$, we define the monotone function $J_\beta:\mathbb{R}\to\mathbb{R}$ by
\[
J_\beta(t)=|t|^{\beta-2}\, t.
\]
For a function $\psi:\mathbb{R}^N\times \mathbb{R} \to\mathbb{R}$ and a vector $h\in\mathbb{R}^N$, we define
\[
\psi_h(x,t)=\psi(x+h,t),\quad \delta_h \psi(x,t)=\psi_h(x,t)-\psi(x,t),\quad \delta^2_h \psi(x,t)=\delta_h(\delta_h \psi(x,t))=\psi_{2\,h}(x,t)+\psi(x,t)-2\,\psi_h(x,t).
\]
It is not difficult to see that the following {\it discrete Leibniz rule} holds
\[
\delta_h(\varphi\,\psi)=\psi_h\,\delta_h \varphi+\varphi\,\delta_h \psi.
\]
\subsection{Sobolev spaces}
We now recall the main notations and definitions for the relevant fractional Sobolev--type spaces throughout the paper.
\par
Let $1\le q<\infty$ and let $\psi\in L^q(\mathbb{R}^N)$, for $0<\beta\le 1$ we set
\[
[\psi]_{\mathcal{N}^{\beta,q}_\infty(\mathbb{R}^N)}:=\sup_{|h|>0} \left\|\frac{\delta_h \psi}{|h|^{\beta}}\right\|_{L^q(\mathbb{R}^N)},
\]
and for $0<\beta<2$
\[
[\psi]_{\mathcal{B}^{\beta,q}_\infty(\mathbb{R}^N)}:=\sup_{|h|>0} \left\|\frac{\delta_h^2 \psi}{|h|^{\beta}}\right\|_{L^q(\mathbb{R}^N)}.
\]
We then introduce the two Besov-type spaces
\[
\mathcal{N}^{\beta,q}_\infty(\mathbb{R}^N)=\left\{\psi\in L^q(\mathbb{R}^N)\, :\, [\psi]_{\mathcal{N}^{\beta,q}_\infty(\mathbb{R}^N)}<+\infty\right\},\qquad 0<\beta\le 1,
\]
and
\[
\mathcal{B}^{\beta,q}_\infty(\mathbb{R}^N)=\left\{\psi\in L^q(\mathbb{R}^N)\, :\, [\psi]_{\mathcal{B}^{\beta,q}_\infty(\mathbb{R}^N)}<+\infty\right\},\qquad 0<\beta<2.
\]
We also need the {\it Sobolev-Slobodecki\u{\i} space}
\[
W^{\beta,q}(\mathbb{R}^N)=\left\{\psi\in L^q(\mathbb{R}^N)\, :\, [\psi]_{W^{\beta,q}(\mathbb{R}^N)}<+\infty\right\},\qquad 0<\beta<1,
\]
where the seminorm $[\,\cdot\,]_{W^{\beta,q}(\mathbb{R}^N)}$ is defined by
\[
[\psi]_{W^{\beta,q}(\mathbb{R}^N)}=\left(\iint_{\mathbb{R}^N\times \mathbb{R}^N} \frac{|\psi(x)-\psi(y)|^q}{|x-y|^{N+\beta\,q}}\,dx\,dy\right)^\frac{1}{q}.
\]
We endow these spaces with the norms
\[
\|\psi\|_{\mathcal{N}^{\beta,q}_\infty(\mathbb{R}^N)}=\|\psi\|_{L^q(\mathbb{R}^N)}+[\psi]_{\mathcal{N}^{\beta,q}_\infty(\mathbb{R}^N)},
\]
\[
\|\psi\|_{\mathcal{B}^{\beta,q}_\infty(\mathbb{R}^N)}=\|\psi\|_{L^q(\mathbb{R}^N)}+[\psi]_{\mathcal{B}^{\beta,q}_\infty(\mathbb{R}^N)},
\]
and
\[
\|\psi\|_{W^{\beta,q}(\mathbb{R}^N)}=\|\psi\|_{L^q(\mathbb{R}^N)}+[\psi]_{W^{\beta,q}(\mathbb{R}^N)}.
\]
A few times we will also work with the space $W^{\beta,q}(\Omega)$ for a subset $\Omega\subset \mathbb{R}^N$,
\[
W^{\beta,q}(\Omega)=\left\{\psi\in L^q(\Omega)\, :\, [\psi]_{W^{\beta,q}(\Omega)}<+\infty\right\},\qquad 0<\beta<1,
\]
where we define
\[
 [\psi]_{W^{\beta,q}(\Omega)}=\left(\iint_{\Omega\times \Omega} \frac{|\psi(x)-\psi(y)|^q}{|x-y|^{N+\beta\,q}}\,dx\,dy\right)^\frac{1}{q}.
\]
The space $W_0^{\beta,q}(\Omega)$ is the subspace of $W^{\beta,q}(\mathbb{R}^N)$ consisting of functions that are identically zero in the complement of $\Omega$.

\subsection{Parabolic Banach spaces}\label{pBspaces}
Let $I\subset \mathbb{R}$ be an interval and let $V$ be a separable, reflexive Banach space, endowed with a norm $\|\cdot\|_V$. We denote by $V^*$ its topological dual space. 
Let us suppose that $v$ is a mapping such that for almost every $t\in I$, $v(t)$ belongs to $V$. If the function $t\mapsto \|v(t)\|_V$ is measurable on $I$ and $1\le p\le \infty$, then $v$ is an element of the Banach space $L^p(I;V)$ if and only if 
\[
\int_I\|v(t)\|_V^pdt<+\infty.
\]
By \cite[Theorem 1.5]{Sh}, the dual space of $L^p(I;V)$ can be characterized according to 
\[
(L^p(I;V))^* = L^{p'}(I;V^*). 
\]
We write $v\in C(I;V)$ if the mapping $t\mapsto v(t)$ is continuous with respect to the norm on $V$. 
We say that $u$ is {\it locally $\alpha-$H\"older continuous in space} (respectively, {\it locally  $\beta-$H\"older continuous in time}) on $\Omega\times I$ and write 
\[
u\in C^\alpha_{x,\text{loc}}(\Omega\times I), \qquad\left(\mbox{respectively, } u\in C^\beta_{t,\text{loc}}(\Omega\times I)\right), 
\]
if for any compact set $K\times J\subset \Omega\times I$, 
\[
\sup_{t\in J}[u(\cdot,t)]_{C^\alpha(K)} < +\infty,\qquad\left(\mbox{respectively, }\sup_{x\in K}[u(x,\cdot)]_{C^\beta(J)} < +\infty\right). 
\]
That is, if $u\in C^\alpha_{x}(K\times J)$ (respectively, $u\in C^\beta_{t}(K\times J)$). 

\subsection{Tail spaces} We recall the definition of {\it tail space}
\[
L^{q}_{\alpha}(\mathbb{R}^N)=\left\{u\in L^{q}_{\rm loc}(\mathbb{R}^N)\, :\, \int_{\mathbb{R}^N} \frac{|u|^q}{1+|x|^{N+\alpha}}\,dx<+\infty\right\},\qquad q\ge 1 \mbox{ and } \alpha>0,
\]
which is endowed with the norm 
\[
\|u\|_{L_\alpha^{q}(\mathbb{R}^N)} = \left(\int_{\mathbb{R}^N} \frac{|u|^q}{1+|x|^{N+\alpha}}\,dx\right)^{\frac{1}{q}}.
\]
For every $x_0\in\mathbb{R}^N$, $R>0$ and $u\in L^q_{\alpha}(\mathbb{R}^N)$, the following quantity
\[
\mathrm{Tail}_{q,\alpha}(u;x_0,R)=\left[R^{\alpha}\,\int_{\mathbb{R}^N\setminus B_R(x_0)} \frac{|u|^q}{|x-x_0|^{N+\alpha}}\,dx\right]^\frac{1}{q},
\]
plays an important role in regularity estimates for solutions of fractional problems.
We recall the following result, see for example \cite[Lemmas 2.1 \& 2.2]{bralinschi} for the proof.
\begin{lm}
Let $\alpha>0$ and $1\le q<m<\infty$. Then:
\begin{itemize}
\item we have the continuous inclusion
\[
L^{m}_{\alpha}(\mathbb{R}^N)\subset L^{q}_{\alpha}(\mathbb{R}^N);
\]
\item for every $0<r<R$ and $x_0\in\mathbb{R}^N$ we have
\[
R^\alpha\,\sup_{x\in B_r(x_0)}\int_{\mathbb{R}^N\setminus B_R(x_0)} \frac{|u(y)|^q}{|x-y|^{N+\alpha}}\,dy\le \left(\frac{R}{R-r}\right)^{N+\alpha}\,\mathrm{Tail}_{q,\alpha}(u;x_0,R)^q.
\]
\end{itemize}
\end{lm}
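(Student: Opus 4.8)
The plan is to prove the two assertions separately by elementary means; neither uses the equation, only H\"older's inequality and the triangle inequality. I will treat the continuous inclusion first, then the tail comparison.

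For the inclusion $L^m_\alpha(\mathbb{R}^N)\subset L^q_\alpha(\mathbb{R}^N)$ when $q<m$, I would fix $u\in L^m_\alpha(\mathbb{R}^N)$ and write the integrand defining $\|u\|_{L^q_\alpha}$ as a product
\[
\frac{|u|^q}{1+|x|^{N+\alpha}}=\left(\frac{|u|^m}{1+|x|^{N+\alpha}}\right)^{\frac{q}{m}}\cdot\left(\frac{1}{1+|x|^{N+\alpha}}\right)^{1-\frac{q}{m}},
\]
and then apply H\"older's inequality with the conjugate exponents $m/q$ and $m/(m-q)$. This yields
\[
\int_{\mathbb{R}^N}\frac{|u|^q}{1+|x|^{N+\alpha}}\,dx\le \left(\int_{\mathbb{R}^N}\frac{|u|^m}{1+|x|^{N+\alpha}}\,dx\right)^{\frac{q}{m}}\left(\int_{\mathbb{R}^N}\frac{dx}{1+|x|^{N+\alpha}}\right)^{\frac{m-q}{m}},
\]
and the last integral is finite exactly because $\alpha>0$ forces the weight $(1+|x|^{N+\alpha})^{-1}$ to decay like $|x|^{-N-\alpha}$ at infinity (while it is bounded near the origin). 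Taking $q$-th roots gives the continuous embedding with an explicit constant $\|u\|_{L^q_\alpha(\mathbb{R}^N)}\le C(N,\alpha,q,m)\,\|u\|_{L^m_\alpha(\mathbb{R}^N)}$.

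For the tail estimate, I would observe that if $x\in B_r(x_0)$ and $y\in\mathbb{R}^N\setminus B_R(x_0)$, then $|x-x_0|<r$ and $|y-x_0|\ge R$, hence $r\le (r/R)\,|y-x_0|$, and therefore by the triangle inequality
\[
|x-y|\ge |y-x_0|-|x-x_0|\ge |y-x_0|-r\ge \left(1-\frac{r}{R}\right)|y-x_0|=\frac{R-r}{R}\,|y-x_0|.
\]
This gives the pointwise bound $|x-y|^{-(N+\alpha)}\le \big(R/(R-r)\big)^{N+\alpha}\,|y-x_0|^{-(N+\alpha)}$ on the region $x\in B_r(x_0)$, $y\notin B_R(x_0)$. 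Multiplying by $|u(y)|^q$, integrating over $\mathbb{R}^N\setminus B_R(x_0)$, then taking the supremum over $x\in B_r(x_0)$ and multiplying by $R^\alpha$ reproduces exactly the right-hand side $\big(R/(R-r)\big)^{N+\alpha}\,\mathrm{Tail}_{q,\alpha}(u;x_0,R)^q$, since the dependence on $x$ has been eliminated and what remains is the definition of $\mathrm{Tail}_{q,\alpha}$.

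I expect no genuine obstacle in either part; the only points requiring a little care are verifying the integrability of the radial weight in the first part (which is where the hypothesis $\alpha>0$ is used) and getting the geometric factor $R/(R-r)$ right in the triangle-inequality comparison of the second part. Both steps are standard and recur throughout the regularity theory for nonlocal operators, which is why the statement is quoted here from \cite{bralinschi} rather than reproved in detail.
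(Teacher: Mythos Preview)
Your argument is correct and is exactly the standard elementary proof: H\"older's inequality against the integrable weight $(1+|x|^{N+\alpha})^{-1}$ for the inclusion, and the triangle-inequality comparison $|x-y|\ge \frac{R-r}{R}\,|y-x_0|$ for the tail bound. The paper does not give its own proof of this lemma but simply refers to \cite[Lemmas 2.1 \& 2.2]{bralinschi}, where the same computations are carried out, so there is nothing to compare.
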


\section{Weak formulation}
\label{sec:weak}

\subsection{Local weak solutions}
\label{sec:local}
In the following, we assume that $\Omega\subset \mathbb{R}^N$ is a bounded open set in $\mathbb{R}^N$. 

\begin{defi}\label{locweak} For any $t_0,t_1\in\mathbb{R}$ with $t_0<t_1$, we define $I=(t_0,t_1]$.
Let 
\[
f\in L^{p'}(I;(W^{s,p}(\Omega))^*).
\]
We say that $u$ is a \emph{local weak solution} to the equation
\begin{equation}\label{locweaksol}
\partial_t u + (-\Delta_p)^su = f,\qquad\mbox{ in }\Omega\times I,
\end{equation}
if for any closed interval $J=[T_0,T_1]\subset I$, the function $u$ is such that
\[
u\in L^p(J;W_{{\rm loc}}^{s,p}(\Omega))\cap L^{p-1}(J;L_{s\,p}^{p-1}(\mathbb{R}^N))\cap C(J;L_{{\rm loc}}^2(\Omega)),
\]
and it satisfies 
\begin{equation}
\begin{split}
\label{locweakeq}
-\int_J\int_\Omega u(x,t)\,\partial_t\phi(x,t)\,dx\,dt&+ \int_J\iint_{\mathbb{R}^N\times\mathbb{R}^N}\frac{J_p(u(x,t)-u(y,t))\,(\phi(x,t)-\phi(y,t))}{|x-y|^{N+s\,p}}\,dx\,dy\,dt  \\   
& = \int_\Omega u(x,T_0)\,\phi(x,T_0)\,dx -\int_\Omega u(x,T_1)\,\phi(x,T_1)\,dx \\
&+ \int_J\langle f(\cdot,t),\phi(\cdot,t)\rangle\,dt,
\end{split}
\end{equation}
for any $\phi\in L^p(J;W^{s,p}(\Omega))\cap C^1(J;L^2(\Omega))$ which has spatial support compactly contained in $\Omega$. 
In equation \eqref{locweakeq}, the symbol $\langle\cdot,\cdot\rangle$  stands for the duality pairing between $W^{s,p}(\Omega)$ and its dual space $(W^{s,p}(\Omega))^*$. 
\par
We also say that $u$ is a \emph{local weak subsolution} if instead of the equality above, we have the $\leq$ sign, for any \emph{non-negative} $\phi$ as above. A \emph{local weak supersolution} is defined similarly.
\end{defi}
\begin{oss}
We observe that $L^\infty(\mathbb{R}^N)\subset L^{p-1}_{s\,p}(\mathbb{R}^N)$. This in turn implies that 
\[
L^\infty(J;L^\infty(\mathbb{R}^N))\subset L^{p-1}(J;L^{p-1}_{s\,p}(\mathbb{R}^N)).
\]
We will use this fact repeatedly.
\end{oss}

\subsection{Regularization of test functions}

Let $\zeta:\mathbb{R}\mapsto \mathbb{R}$ be a nonnegative, even smooth function with compact support in $(-1/2,1/2)$, satisfying $\int_{\mathbb{R}}\zeta(\tau)\,d\tau=1$. 
If $f\in L^1((a,b))$, we define the convolution 
\begin{equation}
\label{convolution}
f^\varepsilon(t) = \frac{1}{\varepsilon}\,\int_{t-\frac{\varepsilon}{2}}^{t+\frac{\varepsilon}{2}}\zeta\left(\frac{t-\ell}{\varepsilon}\right)\,f(\ell)\,d\ell=\frac{1}{\varepsilon}\,\int_{-\frac{\varepsilon}{2}}^{\frac{\varepsilon}{2}}\zeta\left(\frac{\sigma}{\varepsilon}\right)\,f(t-\sigma)\,d\sigma,\qquad \mbox{ for } t\in (a,b),
\end{equation}
where $0<\varepsilon<\min\{b-t,\,t-a\}$. 
The following result justifies that we may take powers of differential quotients of a solution, as test functions. This is needed in the sequel. In the rest of the paper, we will use the abbreviated notation
\[
d\mu(x,y)=\frac{dx\,dy}{|x-y|^{N+s\,p}}.
\]
\begin{lm}[Discrete differentiation of the equation]
\label{Mlemreg}
Assume that $u$ is a local weak solution of \eqref{locweaksol} with $f=0$ in $B_2\times (-2,0]$, such that
\[
u\in L^\infty([-1,0]\times E),\qquad \mbox{ for every } E\Subset B_2.
\] 
Let $\eta$ be a non-negative Lipschitz function, with compact support in $B_2$. Let $\tau$ be a smooth non-negative function such that $0\le \tau\le 1$ and
\[
\tau(t)=0\quad \mbox{ for }t\leq T_0, \qquad \tau(t)=1 \quad \mbox{ for }t\ge T_1 
\]
for some $-1<T_0<T_1< 0$.
\par
Then, for any locally Lipschitz function $F:\mathbb{R}\to\mathbb{R}$ and any $h\in\mathbb{R}^N$ such that $0<h<\mathrm{dist\,}(\mathrm{supp\,}\eta, \partial B_2)/4$, we have
\begin{equation}
\label{eq:diffeq}
\begin{split}
\int_{T_0}^{T_1}\iint_{\mathbb{R}^N\times\mathbb{R}^N}& {\Big(J_p(u_h(x,t)-u_h(y,t))-J_p(u(x,t)-u(y,t))\Big)}\\
&\times\Big(F(u_h(x,t)-u(x,t))\,\eta(x)^p-F(u_h(y,t)-u(y,t))\,\eta(y)^p\Big)\tau(t)\,d\mu\, dt\\
&+\int_{B_2} \mathcal{F}(\delta_h u(x,T_1))\, \eta(x)^p\, dx=\int_{T_0}^{T_1}\int_{B_2} \mathcal{F}(\delta_h u)\, \eta^p\,\tau'\, dx \,dt,
\end{split}
\end{equation} 
where $\mathcal{F}(t)=\int_0^t F(\rho)\,d\rho$.
\end{lm}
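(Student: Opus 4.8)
The plan is to obtain \eqref{eq:diffeq} by inserting the (non-admissible) test function $\phi(x,t)=F(\delta_h u(x,t))\,\eta(x)^p\,\tau(t)$ into the weak formulation \eqref{locweakeq}, written both for $u$ and for its spatial translate $u_h$, and subtracting. The only genuine obstruction is that this $\phi$ is not $C^1$ in time and hence not admissible, so throughout the argument I replace $\delta_h u$ by its time mollification as in \eqref{convolution} and remove the regularization only at the very end.

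\textbf{Step 1 (time localization and discrete differentiation).} I first apply \eqref{locweakeq} on a closed subinterval of $I$ strictly containing $[T_0,T_1]$, with the test function $\phi(x,t)=\varphi(x)\,\tfrac{1}{\varepsilon}\,\zeta\!\big(\tfrac{\ell-t}{\varepsilon}\big)$, where $\varphi\in W^{s,p}(\Omega)$ has compact spatial support in $\Omega$ and $\ell\in[T_0,T_1]$; for $\varepsilon$ small the boundary terms vanish for all such $\ell$, smoothness of $\zeta$ makes $\phi$ admissible, and the evenness of $\zeta$ gives $-\int u(x,t)\,\partial_t[\tfrac1\varepsilon\zeta(\tfrac{\ell-t}{\varepsilon})]\,dt=\partial_\ell u^\varepsilon(x,\ell)$. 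This yields, for every $\ell\in[T_0,T_1]$,
\[
\int_\Omega \partial_\ell u^\varepsilon(x,\ell)\,\varphi(x)\,dx+\Big(\iint_{\mathbb{R}^N\times\mathbb{R}^N}J_p\big(u(x,\cdot)-u(y,\cdot)\big)\,\big(\varphi(x)-\varphi(y)\big)\,d\mu\Big)^{\!\varepsilon}(\ell)=0,
\]
where $(\,\cdot\,)^\varepsilon$ is the mollification in $t$. Since the kernel $|x-y|^{-N-s\,p}$ and Lebesgue measure are translation invariant, $u_h$ is a local weak solution in $B_2-h$, and for $0<|h|<\mathrm{dist}(\mathrm{supp}\,\eta,\partial B_2)/4$ the same $\varphi$ is admissible there; subtracting the two identities gives the analogous relation with $J_p(u_h(x,\cdot)-u_h(y,\cdot))-J_p(u(x,\cdot)-u(y,\cdot))$ in place of $J_p(u(x,\cdot)-u(y,\cdot))$ and $\partial_\ell(\delta_h u)^\varepsilon$ in place of $\partial_\ell u^\varepsilon$.

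\textbf{Step 2 (choice of test function and $\varepsilon$-level identity).} For each fixed $\ell$ I insert into this relation the function $\varphi=\varphi_\ell:=F\big((\delta_h u)^\varepsilon(\cdot,\ell)\big)\,\eta^p$; it is admissible, since by the local boundedness hypothesis $(\delta_h u)^\varepsilon(\cdot,\ell)$ is bounded uniformly in $\varepsilon$ on a compact neighbourhood of $\mathrm{supp}\,\eta$ and lies in $W^{s,p}_{\rm loc}$, $F$ is Lipschitz on the relevant bounded range, and $\eta$ is Lipschitz with compact support. As $(\delta_h u)^\varepsilon$ is smooth in $\ell$, the chain rule gives $\partial_\ell(\delta_h u)^\varepsilon\,F((\delta_h u)^\varepsilon)=\partial_\ell\mathcal{F}((\delta_h u)^\varepsilon)$; multiplying by $\tau(\ell)$, integrating over $(T_0,T_1)$ and integrating by parts in $\ell$ with $\tau(T_0)=0$, $\tau(T_1)=1$, I arrive at the identity \eqref{eq:diffeq} in which $\delta_h u$ is replaced by $(\delta_h u)^\varepsilon$ in the two terms containing $\mathcal{F}$, and the nonlocal double integral carries an extra outer mollification $(\,\cdot\,)^\varepsilon(\ell)$.

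\textbf{Step 3 (removing the regularization).} Because $u\in C(J;L^2_{\rm loc}(\Omega))$, for every $\ell$ one has $(\delta_h u)^\varepsilon(\cdot,\ell)\to\delta_h u(\cdot,\ell)$ in $L^2_{\rm loc}$ with a uniform local $L^\infty$ bound; since $\mathcal{F}$ is locally Lipschitz, $\int_{B_2}\mathcal{F}((\delta_h u)^\varepsilon(x,T_1))\,\eta^p\,dx\to\int_{B_2}\mathcal{F}(\delta_h u(x,T_1))\,\eta^p\,dx$ and, by dominated convergence in $\ell$, the right-hand side converges to $\int_{T_0}^{T_1}\!\int_{B_2}\mathcal{F}(\delta_h u)\,\eta^p\,\tau'\,dx\,dt$. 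For the nonlocal term I split the integral into a near part, where $x,y$ both lie in a fixed ball containing $\mathrm{supp}\,\eta$ and $u,u_h$ are bounded, and a far part, where $\eta$ vanishes at one of the two points so that $|x-y|$ is bounded below and one variable is localized by $\eta^p$; using $|J_p(a)-J_p(b)|\le(p-1)(|a|+|b|)^{p-2}|a-b|$, the pointwise a.e. convergence $(\delta_h u)^\varepsilon\to\delta_h u$, the $L^1$-convergence $g^\varepsilon(\ell)\to g(\ell)$ of the outer mollification, and as integrable majorant the integrand already present in the weak formulation — finite by $u(\cdot,t)\in W^{s,p}_{\rm loc}$ on the near part and by $u(\cdot,t)\in L^{p-1}_{s\,p}(\mathbb{R}^N)$ on the far part, both $L^1$ in $t$ — dominated convergence produces exactly the first double integral of \eqref{eq:diffeq}. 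Collecting the three limits proves the lemma. \emph{The main obstacle} is precisely this last passage to the limit: one must send the mollification parameter to zero \emph{inside} a nonlinear and nonlocal quantity while keeping the long-range interactions under control through the tail integrability, which requires exhibiting a single dominating function valid on both the near and the far region.
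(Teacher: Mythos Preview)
Your strategy coincides with the paper's: time-mollify, insert the regularized test function into the differentiated weak formulation, integrate by parts in time, and then remove the regularization. Your Step~1 is in fact cleaner than what the paper does --- by working on a closed subinterval strictly containing $[T_0,T_1]$ the boundary terms in \eqref{locweakeq} vanish identically, so you avoid the error term $\Sigma(\varepsilon)$ and the rescaled cutoff $\tau_\varepsilon$ that the paper has to carry and dispose of separately. Step~2 is the same as the paper's.

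The one soft spot is the near-part limit in Step~3. You invoke dominated convergence with ``the integrand already present in the weak formulation'' as majorant, but the test function $F((\delta_h u)^\varepsilon(\cdot,\ell))\,\eta^p$ depends on $\varepsilon$, and there is no $\varepsilon$-free pointwise bound on
\[
\frac{\big|F((\delta_h u)^\varepsilon(x,\ell))-F((\delta_h u)^\varepsilon(y,\ell))\big|}{|x-y|^{\frac{N}{p}+s}};
\]
its $L^p$ norm in $(x,y)$ is only controlled after a further integration in $\ell$ (via Jensen on the mollifier), not pointwise in $\ell$. So plain Lebesgue dominated convergence does not apply as stated. The paper (Appendix~\ref{sec:lemma33}) closes this by proving that the mollified test function is uniformly bounded in $L^p\big([T_0,T_1];W^{s,p}(B_{2-2h})\big)$ and hence, up to a subsequence, weakly convergent there, while the factor
\[
\frac{J_p(u_h(x,t)-u_h(y,t))-J_p(u(x,t)-u(y,t))}{|x-y|^{\frac{N}{p'}+s(p-1)}}
\]
lies in the dual $L^{p'}\big([T_0,T_1];L^{p'}(B_{2-2h}\times B_{2-2h})\big)$; the pairing then passes to the limit. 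An equivalent fix in your framework would be to replace the dominated convergence step on the near part by a uniform-integrability / Vitali argument. Your far-part argument, by contrast, is correct as written: the uniform $L^\infty$ bound on $F((\delta_h u)^\varepsilon)$, the lower bound on $|x-y|$, and the tail integrability $u\in L^{p-1}_{s\,p}(\mathbb{R}^N)$ do produce a genuine $\varepsilon$-independent majorant.
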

\begin{proof} We take $J=[T_0,T_1]\subset (-1,0)$ and $\phi\in L^p(J;W^{s,p}(B_2))\cap C^1(J;L^2(B_2))$, whose spatial support is compactly contained in $B_2$. 
We want to use the time-regularization $\phi^\varepsilon$ as test function in \eqref{locweaksol}. For this, we take 
\[
0<\varepsilon<\varepsilon_0:=\frac{1}{2}\,\min\{-T_1,T_0+1,T_1-T_0\}.
\]
Then, we preliminary observe that from elementary properties of convolutions, Fubini's Theorem and integration by parts, we have
\[
\begin{split}
-\int_{T_0}^{T_1} \int_{B_2} u(x,t)\,\partial_t \phi^\varepsilon(x,t)\,dx\,dt&=-\int_{B_2}\int_{T_0}^{T_1}  u(x,t)\,(\partial_t \phi)^\varepsilon\,dt\,dx\\
&=-\int_{B_2} \int_{T_0}^{T_1} \frac{1}{\varepsilon}\,\int_{t-\frac{\varepsilon}{2}}^{t+\frac{\varepsilon}{2}} u(x,t)\,\partial_\ell\phi(x,\ell)\,\zeta\left(\frac{t-\ell}{\varepsilon}\right)\,d\ell\,dt\,dx\\
&=-\int_{B_2}\int_{T_0+\frac{\varepsilon}{2}}^{T_1-\frac{\varepsilon}{2}} u^\varepsilon(x,\ell)\,\partial_\ell\phi(x,\ell)\,d\ell\,dx\\
&-\int_{B_2}\int_{T_0-\frac{\varepsilon}{2}}^{T_0+\frac{\varepsilon}{2}}  \left(\frac{1}{\varepsilon}\,\int_{T_0}^{\ell+\frac{\varepsilon}{2}} u(x,t)\,\zeta\left(\frac{\ell-t}{\varepsilon}\right)\,dt\right)\,\partial_\ell\phi(x,\ell)\,d\ell\,dx\\
&-\int_{B_2}\int_{T_1-\frac{\varepsilon}{2}}^{T_1+\frac{\varepsilon}{2}}  \left(\frac{1}{\varepsilon}\,\int_{\ell-\frac{\varepsilon}{2}}^{T_1} u(x,t)\,\zeta\left(\frac{\ell-t}{\varepsilon}\right)\,dt\right)\,\partial_\ell\phi(x,\ell)\,d\ell\,dx\\
&=\int_{B_2}\int_{T_0+\frac{\varepsilon}{2}}^{T_1-\frac{\varepsilon}{2}} \partial_\ell u^\varepsilon(x,\ell)\,\phi(x,\ell)\,d\ell\,dx+\Sigma(\varepsilon)\\
&-\int_{B_2} \left[u^\varepsilon\left(x,T_1-\frac{\varepsilon}{2}\right)\,\phi\left(x,T_1-\frac{\varepsilon}{2}\right)-u^\varepsilon\left(x,T_0+\frac{\varepsilon}{2}\right)\,\phi\left(x,T_0+\frac{\varepsilon}{2}\right)\right]\,dx.
\end{split}
\]
For simplicity, we have set
\[
\begin{split}
\Sigma(\varepsilon)=&-\int_{B_2}\int_{T_0-\frac{\varepsilon}{2}}^{T_0+\frac{\varepsilon}{2}}  \left(\frac{1}{\varepsilon}\,\int_{T_0}^{\ell+\frac{\varepsilon}{2}} u(x,t)\,\zeta\left(\frac{\ell-t}{\varepsilon}\right)\,dt\right)\,\partial_\ell\phi(x,\ell)\,d\ell\,dx\\
&-\int_{B_2}\int_{T_1-\frac{\varepsilon}{2}}^{T_1+\frac{\varepsilon}{2}}  \left(\frac{1}{\varepsilon}\,\int_{\ell-\frac{\varepsilon}{2}}^{T_1} u(x,t)\,\zeta\left(\frac{\ell-t}{\varepsilon}\right)\,dt\right)\,\partial_\ell\phi(x,\ell)\,d\ell\,dx.
\end{split}
\]
Thus from \eqref{locweakeq} it follows that for $0<\varepsilon<\varepsilon_0$ 
\begin{equation}
\label{eq:epseq}
\begin{split}
\int_{T_0}^{T_1} \iint_{\mathbb{R}^N\times\mathbb{R}^N} &\Big(J_p(u(x,t)-u(y,t))\Big)\,\Big(\phi^\varepsilon(x,t)-\phi^\varepsilon(y,t)\Big)\,d\mu(x,y)\,dt\\
&+\int_{B_2}\int_{T_0+\frac{\varepsilon}{2}}^{T_1-\frac{\varepsilon}{2}} \partial_t u^\varepsilon(x,t)\,\phi(x,t)\,dt\,dx+\Sigma(\varepsilon)\\
& = \int_{B_2} \left[u(x,T_0)\,\phi(x,T_0)-u^\varepsilon\left(x,T_0+\frac{\varepsilon}{2}\right)\,\phi\left(x,T_0+\frac{\varepsilon}{2}\right)\right]\,dx \\
&+\int_{B_2} \left[u^\varepsilon\left(x,T_1-\frac{\varepsilon}{2}\right)\,\phi\left(x,T_1-\frac{\varepsilon}{2}\right)-u(x,T_1)\,\phi(x,T_1)\right]\,dx,
\end{split}
\end{equation}
Before proceeding further, we observe that by using an integration by parts, the term $\Sigma(\varepsilon)$ can be rewritten as
\[
\begin{split}
\Sigma(\varepsilon)=&-\int_{B_2}\left(\frac{1}{\varepsilon}\,\int_{T_0}^{T_0+\varepsilon} u(x,t)\,\zeta\left(\frac{T_0-t}{\varepsilon}+\frac{1}{2}\right)\,dt\right)\,\phi\left(x,T_0+\frac{\varepsilon}{2}\right)\,dx\\
&+\int_{B_2}\int_{T_0-\frac{\varepsilon}{2}}^{T_0+\frac{\varepsilon}{2}}  \left(\frac{1}{\varepsilon^2}\,\int_{T_0}^{\ell+\frac{\varepsilon}{2}} u(x,t)\,\zeta'\left(\frac{\ell-t}{\varepsilon}\right)\,dt\right)\,\phi(x,\ell)\,d\ell\,dx\\
&+\int_{B_2}\left(\frac{1}{\varepsilon}\,\int^{T_1}_{T_1-\varepsilon} u(x,t)\,\zeta\left(\frac{T_1-t}{\varepsilon}-\frac{1}{2}\right)\,dt\right)\,\phi\left(x,T_1-\frac{\varepsilon}{2}\right)\,dx\\
&-\int_{B_2}\int_{T_1-\frac{\varepsilon}{2}}^{T_1+\frac{\varepsilon}{2}}  \left(\frac{1}{\varepsilon^2}\,\int^{T_1}_{\ell-\frac{\varepsilon}{2}} u(x,t)\,\zeta'\left(\frac{\ell-t}{\varepsilon}\right)\,dt\right)\,\phi(x,\ell)\,d\ell\,dx,
\end{split}
\]
where we also used that $\zeta$ has compact support in $(-1/2,1/2)$.
By further using a suitable change of variables, we can also write
\begin{equation}
\label{sigma}
\begin{split}
\Sigma(\varepsilon)&=-\int_{B_2}\left(\frac{1}{\varepsilon}\,\int_{T_0}^{T_0+\varepsilon} u(x,t)\,\zeta\left(\frac{T_0-t}{\varepsilon}+\frac{1}{2}\right)\,dt\right)\,\phi\left(x,T_0+\frac{\varepsilon}{2}\right)\,dx\\
&+\int_{B_2}\int_{-\frac{1}{2}}^{\frac{1}{2}}  \left(\int_{-\frac{1}{2}}^{\rho} u(x,\varepsilon\,\rho+T_0-\varepsilon\,\sigma)\,\zeta'\left(\sigma\right)\,d\sigma\right)\,\phi(x,\varepsilon\,\rho+T_0)\,d\rho\,dx\\
&+\int_{B_2}\left(\frac{1}{\varepsilon}\,\int^{T_1}_{T_1-\varepsilon} u(x,t)\,\zeta\left(\frac{T_1-t}{\varepsilon}-\frac{1}{2}\right)\,dt\right)\,\phi\left(x,T_1-\frac{\varepsilon}{2}\right)\,dx\\
&-\int_{B_2}\int_{-\frac{1}{2}}^{\frac{1}{2}}  \left(\int^{\rho}_{\frac{1}{2}} u(x,\varepsilon\,\rho+T_1-\varepsilon\,\sigma)\,\zeta'\left(\sigma\right)\,d\sigma\right)\,\phi(x,\varepsilon\,\rho+T_1)\,d\rho\,dx\\
\end{split}
\end{equation}
By testing \eqref{eq:epseq} with $\phi_{-h}(x,t)=\phi(x-h,t)$ for 
\[
0<|h|<\frac{h_0}{4}:=\frac{1}{4}\,\mathrm{dist}(\mathrm{supp\,}\phi, \partial B_2),
\] 
and then changing variables, we get
\begin{equation}
\label{equationepsh}
\begin{split}
\int_{T_0}^{T_1}\iint_{\mathbb{R}^N\times\mathbb{R}^N}& \Big(J_p(u_h(x,t)-u_h(y,t))\Big)\,\Big(\phi^\varepsilon(x,t)-\phi^\varepsilon(y,t)\Big)\,d\mu(x,y)\,dt\\
&+\int_{B_2}\int_{T_0-\frac{\varepsilon}{2}}^{T_1+\frac{\varepsilon}{2}} \partial_t u^\varepsilon_h\,\phi\,dt\,dx+\Sigma_h(\varepsilon)\\
& = \int_{B_2} \left[u_h(x,T_0)\,\phi(x,T_0)-u^\varepsilon_h\left(x,T_0+\frac{\varepsilon}{2}\right)\,\phi\left(x,T_0+\frac{\varepsilon}{2}\right)\right]\,dx \\
&+\int_{B_2} \left[u^\varepsilon_h\left(x,T_1-\frac{\varepsilon}{2}\right)\,\phi\left(x,T_1-\frac{\varepsilon}{2}\right)-u_h(x,T_1)\,\phi(x,T_1)\,dx\right]\,dx.
\end{split}
\end{equation}
The quantity $\Sigma_h(\varepsilon)$ is defined as in \eqref{sigma}, with $u_h$ in place of $u$.
We subtract \eqref{eq:epseq} from \eqref{equationepsh}, so to get
\begin{equation}
\label{differentiatedeps}
\begin{split}
&\int_{T_0}^{T_1}\iint_{\mathbb{R}^N\times\mathbb{R}^N} \Big(J_p(u_h(x,t)-u_h(y,t))-J_p(u(x,t)-u(y,t))\Big)\,\Big(\phi^\varepsilon(x,t)-\phi^\varepsilon(y,t)\Big)\,d\mu\, dt\\
&+\int_{T_0}^{T_1}\int_{B_2}\partial_t(u^\varepsilon_h-u^\varepsilon)\,\phi\, dx\, dt+(\Sigma_h(\varepsilon)-\Sigma(\varepsilon))\\
& = \int_{B_2} \left[\delta_h u(x,T_0)\,\phi(x,T_0)-\delta_h u^\varepsilon\left(x,T_0+\frac{\varepsilon}{2}\right)\,\phi\left(x,T_0+\frac{\varepsilon}{2}\right)\right]\,dx \\
&+\int_{B_2} \left[\delta_h u^\varepsilon\left(x,T_1-\frac{\varepsilon}{2}\right)\,\phi\left(x,T_1-\frac{\varepsilon}{2}\right)-\delta_ h u(x,T_1)\,\phi(x,T_1)\,dx\right]\,dx,
\end{split}
\end{equation}
for every $\phi\in L^p([T_0,T_1];W^{s,p}(B_2))\cap C^1([T_0,T_1];L^2(B_2))$, whose spatial support is compactly contained in $B_2$. We take $F$ as in the statement and use \eqref{differentiatedeps} with the test function
\[
\phi=F(u^\varepsilon_h-u^\varepsilon)\,\eta^p\,\tau_\varepsilon=F(\delta_h u^\varepsilon)\,\eta^p\,\tau_\varepsilon,
\]
where
\[
\tau_\varepsilon(t)=\tau\left(\frac{T_1-T_0}{T_1-T_0-\varepsilon}\,\left(t-T_1+\frac{\varepsilon}{2}\right)+T_1\right),
\] 
and $\eta$ and $\tau$ are as in the statement. By observing that
\[
\tau_\varepsilon(t)=0,\quad \mbox{ for }t\le T_0+\frac{\varepsilon}{2},\qquad \tau_\varepsilon(t)=1,\quad \mbox{ for }t\ge T_1-\frac{\varepsilon}{2},
\]
we get
\begin{equation}
\label{everyday}
\begin{split}
\int_{T_0}^{T_1}\iint_{\mathbb{R}^N\times\mathbb{R}^N}& {\Big(J_p(u_h(x,t)-u_h(y,t))-J_p(u(x,t)-u(y,t))\Big)}\\
&\times\left(\Big(F(\delta_h u^\varepsilon(x,t))\,\tau_\varepsilon(t)\Big)^\varepsilon\,\eta(x)^p-\Big( F(\delta_h u^\varepsilon(y,t))\,\tau_\varepsilon(t)\Big)^\varepsilon\,\eta(y)^p\right)\,d\mu\, dt\\
&+\int_{T_0}^{T_1}\int_{B_2}\partial_t(\delta_h u^\varepsilon)\,F(\delta_h u^\varepsilon)\,\eta^p(x)\,\tau_\varepsilon(t)\, dx\, dt+(\Sigma_h(\varepsilon)-\Sigma(\varepsilon))\\
&=\int_{B_2} \left[\delta_h u^\varepsilon\left(x,T_1-\frac{\varepsilon}{2}\right) F\left(\delta_h u^\varepsilon\left(x,T_1-\frac{\varepsilon}{2}\right)\right)-\delta_h u\left(x,T_1\right) F\left(\delta_h u\left(x,T_1\right)\right)\right]\,\eta^p\,dx.
\end{split}
\end{equation}
Observe that we used the properties of $\tau_\varepsilon$. In order to deal with the integral containing the time derivative of $\delta_h u^\varepsilon$, we first observe that
\[
\partial_t(\delta_h u^\varepsilon)\,F(\delta_h u^\varepsilon)=\partial_t \mathcal{F}(\delta_h u^\varepsilon).
\] 
Thus we can use integration by parts, which yields
\[
\begin{split}
\int_{T_0}^{T_1}\int_{B_2}\partial_t(\delta_h u^\varepsilon(x,t))\,F(\delta_h u^\varepsilon)\,\eta^p(x)\,\tau_\varepsilon(t)\, dx\, dt&=\int_{B_2} \mathcal{F}(\delta_h u^\varepsilon(x,T_1))\, \eta(x)^p \,dx-\int_{-1}^{T_1}\int_{B_2} \mathcal{F}(\delta_h u^\varepsilon)\, \eta^p\,\tau'_\varepsilon\, dx\, dt.
\end{split}
\]
By inserting this into \eqref{everyday}, we get
\begin{equation}
\label{dioboe}
\begin{split}
\int_{T_0}^{T_1}\iint_{\mathbb{R}^N\times\mathbb{R}^N}& {\Big(J_p(u_h(x,t)-u_h(y,t))-J_p(u(x,t)-u(y,t))\Big)}\\
&\times\left(\Big(F(\delta_h u^\varepsilon(x,t))\,\tau_\varepsilon(t)\Big)^\varepsilon\,\eta(x)^p-\Big( F(\delta_h u^\varepsilon(y,t))\,\tau_\varepsilon(t)\Big)^\varepsilon\,\eta(y)^p\right)\,d\mu\, dt\\
&+\int_{B_2}\mathcal{F}(\delta_h u^\varepsilon(x,T_1))\, \eta(x)^p \,dx\\
&-\int_{T_0}^{T_1}\int_{B_2} \mathcal{F}(\delta_h u^\varepsilon)\, \eta^p\,\tau'_\varepsilon\, dx\, dt+(\Sigma_h(\varepsilon)-\Sigma(\varepsilon))\\
&=\int_{B_2} \left[\delta_h u^\varepsilon\left(x,T_1-\frac{\varepsilon}{2}\right) F\left(\delta_h u^\varepsilon\left(x,T_1-\frac{\varepsilon}{2}\right)\right)-\delta_h u\left(x,T_1\right) F\left(\delta_h u\left(x,T_1\right)\right)\right]\,\eta^p\,dx.
\end{split}
\end{equation}
We recall that this is valid for 
\[
0<|h|<\frac{h_0}{4}\qquad \mbox{ and }\qquad 0<\varepsilon<\varepsilon_0.
\]
Before taking the limit as $\varepsilon$ goes to $0$, we first observe that for $t\in [T_0-\varepsilon/2,T_1+\varepsilon/2]$ and $x\in B_{2-2\,h}$ we have
\[
\begin{split}
|\delta_h u^\varepsilon(x,t)|&\le \frac{1}{\varepsilon}\,\int_{-\frac{\varepsilon}{2}}^\frac{\varepsilon}{2}\,\zeta\left(\frac{\sigma}{\varepsilon}\right)\,|\delta_h u(x,t-\sigma)|\,d\sigma\\
&=\int_{-\frac{1}{2}}^\frac{1}{2} \zeta(\sigma)\,|\delta_h u(x,t-\varepsilon\,\sigma)|\,d\sigma\le \|\delta_h u\|_{L^\infty([T_0-\varepsilon_0,T_1+\varepsilon_0]\times B_{2-2\,h})}.
\end{split}
\]
This shows that we have the uniform $L^\infty$ estimate
\begin{equation}
\label{unifaus}
\|\delta_h u^\varepsilon\|_{L^\infty\left(\left[T_0-\frac{\varepsilon}{2},T_1+\frac{\varepsilon}{2}\right]\times B_{2-2\,h}\right)}\le 2\,\|u\|_{L^\infty([T_0-\varepsilon_0,T_1+\varepsilon_0]\times B_{2-h})},\qquad \mbox{ for } 0<\varepsilon<\varepsilon_0,\, 0<|h|<\frac{h_0}{4}.
\end{equation}
Finally, we pass to the limit in \eqref{dioboe} as $\varepsilon$ goes to $0$.
We start from the right-hand side: by using the local Lipschitz regularity of $F$ and \eqref{unifaus}, we have
\[
\begin{split}
\Big|\delta_h u^\varepsilon\left(x,T_1-\frac{\varepsilon}{2}\right)& F\left(\delta_h u^\varepsilon\left(x,T_1-\frac{\varepsilon}{2}\right)\right)-\delta_h u\left(x,T_1\right) F\left(\delta_h u\left(x,T_1\right)\right)\Big|\,\eta(x)^p\\
&\le C\, \left|\delta_h u^\varepsilon\left(x,T_1-\frac{\varepsilon}{2}\right)-\delta_h u\left(x,T_1\right)\right|\,\eta(x)^p\\
&\le C\, \left|u^\varepsilon\left(x+h,T_1-\frac{\varepsilon}{2}\right)-u(x+h,T_1)\right|\,\eta(x)^p\\
&+C\, \left|u^\varepsilon\left(x,T_1-\frac{\varepsilon}{2}\right)-u(x,T_1)\right|\,\eta(x)^p,
\end{split}
\] 
where $C>0$ does not depend on $\varepsilon$. Thus, by using that $\eta$ has compact support in $B_2$ and $0<|h|<h_0/4$, we get from the last estimate (after a change of variable)
\[
\begin{split}
\Big|\int_{B_2}& 
\Big|\delta_h u^\varepsilon\left(x,T_1-\frac{\varepsilon}{2}\right) F\left(\delta_h u^\varepsilon\left(x,T_1-\frac{\varepsilon}{2}\right)\right)-\delta_h u\left(x,T_1\right) F\left(\delta_h u\left(x,T_1\right)\right)\Big|
\,\eta(x)^p\,dx\Big|\\
&\le C\,\int_{B_{2-2\,h}} \left|u^\varepsilon\left(x,T_1-\frac{\varepsilon}{2}\right)-u(x,T_1)\right|\,dx\\
&\le C\,\int_{B_{2-2\,h}} \left|\frac{1}{\varepsilon}\,\int_{-\frac{\varepsilon}{2}}^{\frac{\varepsilon}{2}}\zeta\left(\frac{\sigma}{\varepsilon}\right)\,\left[u\left(x,T_1-\frac{\varepsilon}{2}-\sigma\right)-u(x,T_1)\right]\,d\sigma \right|\,dx\\
&\le C\,\int_{-\frac{1}{2}}^{\frac{1}{2}}\zeta\left(\rho\right)\,\left(\int_{B_{2-2\,h}} \left|u\left(x,T_1-\frac{\varepsilon}{2}-\varepsilon\,\rho\right)-u(x,T_1)\right|\,dx\right)\,d\rho \\
&\le C\,\sup_{-\varepsilon\le t\le 0} \int_{B_{2-2\,h}} \left|u\left(x,T_1-t\right)-u(x,T_1)\right|\,dx. 
\end{split}
\]
The constant $C$ is still independent of $0<\varepsilon<\varepsilon_0$.
If we now use that $u\in C((-2,0];L^2_{\rm loc}(B_2))$, we get that the last quantity converges to $0$, as $\varepsilon$ goes to $0$.
\par
For the term 
\[
\int_{B_2} \mathcal{F}(\delta_h u^\varepsilon(x,T_1))\, \eta(x)^p \,dx,
\]
we proceed similarly as above.
We observe that 
\[
\begin{split}
&\left|\int_{B_2} \mathcal{F}(\delta_h u^\varepsilon(x,T_1))\, \eta(x)^p \,dx-\int_{B_2} \mathcal{F}(\delta_h u(x,T_1))\, \eta(x)^p\, dx\right|
\\
&\le \int_{B_{2-2\,h}}\left( \frac{1}{\varepsilon}\,\int_{-\frac{\varepsilon}{2}}^\frac{\varepsilon}{2} \zeta\left(\frac{\sigma}{\varepsilon}\right)\,\Big|\mathcal{F}(\delta_h u(x,T_1-\sigma))-\mathcal{F}(\delta_h u(x,T_1))\Big|\,d\sigma\right)\,dx\\
&=\frac{1}{\varepsilon}\,\int_{-\frac{\varepsilon}{2}}^\frac{\varepsilon}{2} \zeta\left(\frac{\sigma}{\varepsilon}\right)\left( \int_{B_{2-2\,h}}\,\Big|\mathcal{F}(\delta_h u(x,T_1-\sigma))-\mathcal{F}(\delta_h u(x,T_1))\Big|\,\eta(x)^p\,dx\right)\,d\sigma\\
&\le C\, \sup_{-\frac{\varepsilon}{2}\le t\le \frac{\varepsilon}{2}}\int_{B_{2-2\,h}}\,\Big|\delta_h u(x,T_1-t)-\delta_h u(x,T_1)\Big|\,\eta(x)^p\,dx.
\end{split}
\]
We can now use again that $u\in C((-2,0];L^2_{\rm loc}(B_2))$ and obtain that the last quantity converges to $0$, as $\varepsilon$ goes to $0$.
\par
As for the term
\[
-\int_{T_0}^{T_1}\int_{B_2} \mathcal{F}(\delta_h u^\varepsilon)\, \eta^p\,\tau'_\varepsilon\, dx\, dt,
\] 
we can proceed exactly as before, we omit the details. In a similar fashion, we can also show that
\[
\lim_{\varepsilon\to 0} |\Sigma_h(\varepsilon)-\Sigma(\varepsilon)|=0.
\]
This is still similar to the previous limits.
It is sufficient to use the expression \eqref{sigma}, the uniform $L^\infty$ estimate \eqref{unifaus} and the fact $u\in C((-2,0];L^2_{\rm loc}(B_2))$, in order to apply the Lebesgue Dominated Convergence Theorem. 
\par
Finally, the convergence of the double integral requires quite lengthy computations and thus we prefer to postpone them to Appendix \ref{sec:lemma33} below.
\end{proof}

\begin{oss}
We observe that the global $L^\infty$ bound on the weak solution is not needed in the previous result. It is sufficient to know that the weak solution is locally bounded. We refer to \cite[Theorem 1.1]{str2} for local boundedness of weak solutions. 
\end{oss}

\section{Spatial almost $C^s$-regularity}
\label{sec:almost}
The following result is an integrability gain for the discrete derivative of order $s$ of a local weak solution. This is the parabolic version of \cite[Proposition 4.1]{bralinschi}, to which we refer for all the missing details.
\begin{prop}
\label{prop:improve} 
Assume $p\ge 2$ and $0<s<1$. Let  $u$ be a local weak solution of $u_t+(-\Delta_p)^s u=0$ in $B_2\times (-2,0]$. We assume that 
\[
\|u\|_{L^\infty(\mathbb{R}^N\times [-1,0])}\leq 1,
\]
and that, for some $q\ge p$ and $0<h_0<1/10$, we have
\[
\int_{T_0}^{T_1}\sup_{0<|h|< h_0}\left\|\frac{\delta^2_h u }{|h|^s}\right\|_{L^q(B_{R+4\,h_0})}^q dt<+\infty,
\]
for a radius $4\,h_0<R\le 1-5\,h_0$ and two time instants $-1<T_0<T_1\le 0$. Then we have
\begin{equation}
\label{iteralo}
\begin{split}
\int_{T_0+\mu}^{T_1}\sup_{0<|h|< h_0}\left\|\frac{\delta^2_h u}{|h|^{s}}\right\|_{L^{q+1}(B_{R-4\,h_0})}^{q+1}dt &+\frac{1}{q+3-p}\sup_{0<|h|< h_0}\left\|\frac{\delta_h u(\cdot,T_1)}{|h|^{\frac{(q+2-p)\,s}{q+3-p}}}\right\|_{L^{q+3-p}(B_{R-4\,h_0})}^{q+3-p}\\
&\leq C\,\int_{T_0}^{T_1}\left(\sup_{0<|h|< h_0}\left\|\frac{\delta^2_h u }{|h|^s}\right\|_{L^q(B_{R+4h_0})}^q+1\right)dt,
\end{split}
\end{equation}
for every $0<\mu<T_1-T_0$.
Here $C=C(N,s,p,q,h_0,\mu)>0$ and $C\nearrow +\infty$ as $h_0\searrow 0$ or $\mu\searrow 0$.
\end{prop}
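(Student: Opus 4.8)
\emph{Proof strategy.} The argument is the parabolic analogue of \cite[Proposition 4.1]{bralinschi}. The plan is to insert into the discretely differentiated equation of Lemma \ref{Mlemreg} a test function of the form $F(\delta_hu)\,\eta^p$, with $F$ essentially equal to $J_{q+3-p}$ — so that $\mathcal{F}(t)=\int_0^tF(\rho)\,d\rho\simeq |t|^{q+3-p}/(q+3-p)$ — together with a Lipschitz spatial cut-off $\eta$ with $\eta\equiv1$ on $B_{R-4h_0}$, $\operatorname{supp}\eta\subset B_R$ and $|\nabla\eta|\le C/h_0$, and a smooth temporal cut-off $\tau$ with $\tau\equiv0$ on $\{t\le T_0\}$, $\tau\equiv1$ on $\{t\ge T_0+\mu\}$ and $|\tau'|\le C/\mu$. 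Since $\|u\|_\infty\le1$, the function $F$ is locally Lipschitz on the range of $\delta_hu$, and $|h|<h_0$ makes the choice admissible in Lemma \ref{Mlemreg}. With this choice the boundary term $\int_{B_2}\mathcal{F}(\delta_hu(\cdot,T_1))\,\eta^p\,dx$ produced by \eqref{eq:diffeq} is, up to a dimensional constant and the appropriate power of $|h|$ and after using $\eta\equiv1$ on $B_{R-4h_0}$, exactly the second term on the left-hand side of \eqref{iteralo}, so it should be kept there; the right-hand side of \eqref{eq:diffeq} becomes $\int_{T_0}^{T_0+\mu}\int\mathcal{F}(\delta_hu)\,\eta^p\,\tau'$, supported where $t\le T_0+\mu$ and carrying the factor $1/\mu$.

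The crux is a lower bound for the double integral in \eqref{eq:diffeq}. Writing $a=u_h(x,t)-u_h(y,t)$, $b=u(x,t)-u(y,t)$ — so that $a-b=\delta_hu(x,t)-\delta_hu(y,t)$ — I would split the test-function difference into a ``diagonal'' piece weighted by $\tfrac12(\eta(x)^p+\eta(y)^p)$ and an ``$\eta$-variation'' piece weighted by $\tfrac12(\eta(x)^p-\eta(y)^p)$. On the diagonal piece one applies the pointwise monotonicity inequalities for $J_p$ and $J_{q+3-p}$ used in \cite{bralinschi} (valid since $p\ge2$ and $q+3-p\ge3$, with the $L^\infty$ bound exploited), which produce from below a Gagliardo-type $W^{s,p}$-energy of $J_{(q+1)/p}(\delta_hu(\cdot,t))$ over $\{t\in[T_0+\mu,T_1]\}$, weighted by $\min\{\eta(x),\eta(y)\}^p$. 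This energy is then converted into the first term on the left of \eqref{iteralo} by the fractional Sobolev/Besov machinery of \cite{bralinschi}: localizing the inner integration to the scale $|x-y|\simeq|h|$, using $\eta\equiv1$ on $B_{R-4h_0}$, the inequality $|J_\gamma(A)-J_\gamma(B)|\ge c\,|A-B|^\gamma$ (for $\gamma\ge1$) applied to $\delta_h\big(J_{(q+1)/p}(\delta_hu)\big)$ — whose increment is $\delta_h^2u$ — and the embedding that turns a Gagliardo seminorm into single-scale difference-quotient control, one arrives, keeping track of the powers of $|h|$ as in \cite{bralinschi}, at a lower bound of the order of $\int_{T_0+\mu}^{T_1}\|\delta_h^2u(\cdot,t)/|h|^s\|_{L^{q+1}(B_{R-4h_0})}^{q+1}\,dt$ times the relevant power of $|h|$.

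The two ``error'' contributions — the $\eta$-variation piece, and the part of the double integral where one variable leaves $B_R$ (the nonlocal tail) — are handled exactly as in the stationary case: using $|\nabla\eta|\le C/h_0$, $\|u\|_\infty\le1$, the tail estimate recalled in Section \ref{sec:prel} and Young's inequality, they are bounded by half of the diagonal term plus $C(N,s,p,q,h_0)\int_{T_0}^{T_1}\big(\sup_{0<|h|<h_0}\|\delta_h^2u/|h|^s\|_{L^q(B_{R+4h_0})}^q+1\big)dt$ times the appropriate power of $|h|$, and the first is absorbed. Finally, the term $\int_{T_0}^{T_0+\mu}\int\mathcal{F}(\delta_hu)\,\eta^p\,\tau'$ is estimated using that, for $0<s<1$, the second-difference Besov bound in the hypothesis controls the first-difference one — that is, $\|\delta_hu(\cdot,t)\|_{L^q(B')}\le C|h|^s\big(\|\delta_h^2u(\cdot,t)/|h|^s\|_{L^q(B'')}+\|u(\cdot,t)\|_{L^q(B'')}\big)$ on a slightly smaller ball — which together with $\|\delta_hu\|_{L^\infty}\le2$ and interpolation bounds $\|\delta_hu(\cdot,t)\|_{L^{q+3-p}}^{q+3-p}$ by the correct power of $|h|$ times $\sup_{0<|h|<h_0}\|\delta_h^2u(\cdot,t)/|h|^s\|_{L^q}^q+1$; since $|\tau'|\le C/\mu$ is supported on an interval of length $\mu$, the whole term is $\le\tfrac{C}{\mu}\int_{T_0}^{T_1}(\cdots+1)\,dt$, with $C\nearrow\infty$ as $\mu\searrow0$, as required. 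Collecting everything and dividing by the common power of $|h|$ yields \eqref{iteralo}.

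The delicate parts are precisely those already carried out in detail in \cite{bralinschi} for the stationary equation: the pointwise algebraic inequalities, and the passage from the Gagliardo energy produced by the diagonal term to a genuine $L^{q+1}$-bound on $\delta_h^2u/|h|^s$ with the correct scaling in $|h|$ (the ``integrability gain''), together with the absorption of the nonlocal tail. What is new here, and where care is needed, is that the time-derivative term of \eqref{eq:diffeq} splits into the boundary term — which turns out to be exactly the second term of \eqref{iteralo} — and the $\tau'$-term, which must be reabsorbed into the right-hand side at the price of the $\mu^{-1}$ loss, and that all of this is compatible with the discrete space-time structure.
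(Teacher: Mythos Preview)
Your proposal is correct and follows the same route as the paper: test Lemma \ref{Mlemreg} with $F=J_{q+3-p}$ (equivalently, the paper's choice $\beta=q-p+2$, $\vartheta=s-1/\beta$, after dividing through by $|h|^{1+\vartheta\beta}$), split the nonlocal form into a local piece and two tail pieces, apply the pointwise algebraic inequalities of \cite{bralinschi} to extract the $W^{s,p}$-energy of $J_{(q+1)/p}(\delta_hu)\,\eta$, convert this to the $L^{q+1}$ second-difference bound via the same Besov machinery, and absorb the $\tau'$-term at the cost of the $\mu^{-1}$ factor. One point you should make explicit: Lemma \ref{Mlemreg} is stated only for $T_1<0$, so the endpoint $T_1=0$ needs a separate limiting argument --- the paper devotes its final step to this, using that the constant in \eqref{iteralo} is independent of $T_1$ together with $u\in C((-2,0];L^2_{\rm loc})$ to pass to the limit $T_1\to 0^-$ in both terms on the left.
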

\begin{proof} We divide the proof into seven steps. 
\vskip.2cm\noindent
{\noindent}{\bf Step 1: Discrete differentiation of the equation.} 
We take for the moment $T_1<0$, then we will show at the end of the proof how to include the case $T_1=0$.
We already introduced the notation
\[
d\mu(x,y)=\frac{dx\,dy}{|x-y|^{N+s\,p}}.
\]
For notational simplicity, we also set
\[
r=R-4\,h_0.
\]
Let $\beta\ge 2$ and $\vartheta\in \mathbb{R}$ be such that $0<1+\vartheta\,\beta<\beta$, and use \eqref{eq:diffeq} for $0<|h|< h_0$, 
where:
\begin{itemize}
\item $F(t)=J_{\beta+1}(t)=|t|^{\beta-1}\,t$, which is locally Lipschitz for $\beta\ge 1$;
\vskip.2cm
\item $\eta$ is a non-negative standard Lipschitz cut-off function supported in $B_{(R+r)/2}$, such that
\[
\eta\equiv 1 \quad \mbox{ on } B_r\qquad \mbox{�and }\qquad |\nabla \eta|\le \frac{C}{R-r}=\frac{C}{4\,h_0}.
\] 
\item $\tau$ is a smooth function such that $0\le \tau\le 1$ and
\[
\tau\equiv 1\quad \mbox{ on }  [T_0+\mu,+\infty), \qquad \tau\equiv 0\quad \mbox{ on } (-\infty,T_0],\qquad |\tau'|\le \frac{C}{\mu}.
\]
Here $\mu$ is as in the statement, i.e. any positive number such that $\mu<T_1-T_0$. 
\end{itemize}
Note that the assumptions on $\eta$ imply
$$
\left|\frac{\delta_h \eta}{|h|}\right|\leq \frac{C}{h_0}.
$$
After dividing by $|h|^{1+\vartheta\,\beta}$, we obtain from Lemma \ref{Mlemreg},
\[
\begin{split}
\int_{T_0}^{T_1}\iint_{\mathbb{R}^N\times\mathbb{R}^N}& \frac{\Big(J_p(u_h(x,t)-u_h(y,t))-J_p(u(x,t)-u(y,t))\Big)}{|h|^{1+\vartheta\,\beta}}\\
&\times\Big(J_{\beta+1}(u_h(x,t)-u(x,t))\,\eta(x)^p-J_{\beta+1}(u_h(y,t)-u(y,t))\,\eta(y)^p\Big)\tau(t)\,d\mu\, dt\\
&+\frac{1}{\beta+1}\int_{B_2}\frac{|\delta_h u(x,T_1)|^{\beta+1}}{|h|^{1+\vartheta\beta}}\, \eta^p dx\,=\frac{1}{\beta+1}\int_{T_0}^{T_1}\int_{B_2}\frac{|\delta_h u|^{\beta+1}}{|h|^{1+\vartheta\beta}}\, \eta^p\,\tau'\, dx\, dt.
\end{split}
\]
The triple integral is now divided into three pieces:
\[
\widetilde{\mathcal{I}_i}:=\int_{T_0}^{T_1} {\mathcal{I}_i}(t)\,\tau(t)\,   dt, \qquad i=1,2,3,
\]
where
\[
\begin{split}
\mathcal{I}_1(t):=\iint_{B_R\times B_R}& \frac{\Big(J_p(u_h(x)-u_h(y))-J_p(u(x)-u(y))\Big)}{|h|^{1+\vartheta\,\beta}}\\
&\times\Big(J_{\beta+1}(u_h(x)-u(x))\,\eta(x)^p-J_{\beta+1}(u_h(y)-u(y))\,\eta(y)^p\Big)\,d\mu ,
\end{split}
\]
\[
\begin{split}
\mathcal{I}_2(t):=\int_{B_\frac{R+r}{2}\times (\mathbb{R}^N\setminus B_R)}& \frac{\Big(J_p(u_h(x)-u_h(y))-J_p(u(x)-u(y))\Big)}{|h|^{1+\vartheta\,\beta}}\,J_{\beta+1}(u_h(x)-u(x))\,\eta(x)^p\,d\mu ,
\end{split}
\]
and
\[
\begin{split}
\mathcal{I}_3(t):=-\iint_{(\mathbb{R}^N\setminus B_R)\times B_\frac{R+r}{2}}& \frac{\Big(J_p(u_h(x)-u_h(y))-J_p(u(x)-u(y))\Big)}{|h|^{1+\vartheta\,\beta}}\,J_{\beta+1}(u_h(y)-u(y))\,\eta(y)^p\,d\mu ,
\end{split}
\]
where we used that $\eta$ vanishes identically outside $B_{(R+r)/2}$. We also suppressed the $t-$dependence, for notational simplicity. We also have the term in the right-hand side
$$
\mathcal{I}_4:=\frac{1}{\beta+1}\int_{T_0}^{T_1}\int_{B_2}\frac{|\delta_h u|^{\beta+1}}{|h|^{1+\vartheta\beta}}\, \eta^p\,\tau'\, dx\, dt.
$$
By proceeding exactly as in Step 1 of the proof of \cite[Proposition 4.1]{bralinschi}, 
we get the following lower bound for $\mathcal{I}_1(t)$
\[
\begin{split}
\mathcal{I}_1\ge c& \left[\frac{|\delta_h u|^\frac{\beta-1}{p}\,\delta_h u}{|h|^\frac{1+\vartheta\,\beta}{p}}\,\eta\right]^p_{W^{s,p}(B_R)}\\
&-C\,\iint_{B_R\times B_R} \left(|u_h(x)-u_h(y)|^\frac{p-2}{2}+|u(x)-u(y)|^\frac{p-2}{2}\right)^2\,\left|\eta(x)^\frac{p}{2}-\eta(y)^\frac{p}{2}\right|^2\\
&\times \frac{|u_h(x)-u(x)|^{\beta+1}+|u_h(y)-u(y)|^{\beta+1}}{|h|^{1+\vartheta\,\beta}}\,d\mu\\
&-C\,\iint_{B_R\times B_R}\, \left(\frac{|\delta_h u(x)|^{\beta-1+p}}{|h|^{1+\vartheta\,\beta}}+\frac{|\delta_h u(y)|^{\beta-1+p}}{|h|^{1+\vartheta\,\beta}}\right)\, |\eta(x)-\eta(y)|^p\,d\mu,
\end{split}
\]
where $c=c(p,\beta)>0$ and $C=C(p,\beta)>0$. We use that 
\[
\widetilde{\mathcal{I}_1}+\widetilde{\mathcal{I}_2}+\widetilde{\mathcal{I}_3} +\frac{1}{\beta+1}\int_{B_2}\frac{|\delta_h u(x,T_1)|^{\beta+1}}{|h|^{1+\vartheta\beta}}\, \eta^p\, dx=\mathcal{I}_4
\]
and the estimate for $\mathcal{I}_1(t)$. This entails that
\begin{equation}
\label{Ieq}
\begin{split}
\int_{T_0}^{T_1} \left[\frac{|\delta_h u|^\frac{\beta-1}{p}\,\delta_h u}{|h|^\frac{1+\vartheta\,\beta}{p}}\,\eta\right]^p_{W^{s,p}(B_R)}\tau\, dt&+\frac{1}{\beta+1}\int_{B_2}\frac{|\delta_h u(x,T_1)|^{\beta+1}}{|h|^{1+\vartheta\beta}}\, \eta^p\, dx\\
&\leq C\,\Big(\mathcal{\widetilde I}_{11}+\mathcal{\widetilde I}_{12}+|\mathcal{\widetilde I}_2|+|\mathcal{\widetilde I}_3|\Big)+\mathcal{I}_4,\quad \mbox{ for }C=C(p,\beta)>0, 
\end{split}
\end{equation}
where we set $\mathcal{\widetilde I}_{11}=\int_{T_0}^{T_1} \mathcal{I}_{11}\, \tau\, dt$, $\mathcal{\widetilde I}_{12}=\int_{T_0}^{T_1} \mathcal{I}_{12}\, \tau\, dt$ and
\begin{equation}
\label{I11}
\begin{split}
\mathcal{I}_{11}(t)&:=\,\iint_{B_R\times B_R} \left(|u_h(x)-u_h(y)|^\frac{p-2}{2}+|u(x)-u(y)|^\frac{p-2}{2}\right)^2\,\left|\eta(x)^\frac{p}{2}-\eta(y)^\frac{p}{2}\right|^2\, \frac{|\delta_h u(x)|^{\beta+1}+|\delta_h u(y)|^{\beta+1}}{|h|^{1+\vartheta\,\beta}}\,d\mu,
\end{split}
\end{equation}
and
\begin{equation}
\label{I12}
\begin{split}
\mathcal{I}_{12}(t)&:=\,\iint_{B_R\times B_R}\, \left(\frac{|\delta_h u(x)|^{\beta-1+p}}{|h|^{1+\vartheta\,\beta}}+\frac{|\delta_h u(y)|^{\beta-1+p}}{|h|^{1+\vartheta\,\beta}}\right)\, |\eta(x)-\eta(y)|^p\,d\mu.
\end{split}
\end{equation}
\vskip.2cm\noindent
{\bf Step 2: Estimates of the local terms $\mathcal{I}_{11}$ and $\mathcal{I}_{12}$.} Here we can follow the same computations as in Step 2 of the proof of \cite[Proposition 4.1]{bralinschi}, so to get
\[
|\mathcal{I}_{11}|\leq C\,\left(1+\int_{B_{R}}\left|\frac{\delta_h u}{|h|^{\frac{1+\vartheta\beta}{\beta}}}\right|^{\frac{\beta\, q}{q-p+2}}\,dx +\sup_{0<|h|< h_0}\left\|\frac{\delta^2_h u}{|h|^s}\right\|_{L^q(B_{R+4h_0})}^q+1\right),
\]
and 
\[
|\mathcal{I}_{12}|\leq C\left( 1+\int_{B_{R}}\left|\frac{\delta_h u}{|h|^{\frac{1+\vartheta\beta}{\beta}}}\right|^{\frac{\beta\, q}{q-p+2}}\,dx+1\right),
\]
for some $C=C(N,h_0,p,s,q)>0$.
If we now use these estimates in \eqref{Ieq}, we get
\begin{equation}
\label{Ieq2}
\begin{split}
\int_{T_0}^{T_1} \left[\frac{|\delta_h u|^\frac{\beta-1}{p}\,\delta_h u}{|h|^\frac{1+\vartheta\,\beta}{p}}\,\eta\right]^p_{W^{s,p}(B_R)}&\,\tau\, dt+\frac{1}{\beta+1}\int_{B_1}\frac{|\delta_h u(x,T_1)|^{\beta+1}}{|h|^{1+\vartheta\beta}}\, \eta^p\, dx\\
&\leq C\int_{T_0}^{T_1} \,\left(\int_{B_R}\left|\frac{\delta_h u}{|h|^\frac{1+\vartheta\,\beta}{\beta}}\right|^\frac{\beta\,q}{q-p+2}\, dx 
 +\sup_{0<|h|< h_0}\left\|\frac{\delta^2_h u}{|h|^s}\right\|_{L^q(B_{R+4h_0})}^q+1\right)\,\tau\, dt\\
&+C\,\Big(|\widetilde{\mathcal{I}_2}|+|\widetilde{\mathcal{I}_3}|+\mathcal{I}_4\Big),
\end{split}
\end{equation}
with $C=C(h_0,N,p,s,q,\beta)>0$.
\vskip.2cm\noindent
{\noindent}{\bf Step 3: Estimates of the nonlocal terms $\mathcal{I}_2$ and $\mathcal{I}_3$.}
Both nonlocal terms $\mathcal{I}_2$ and $\mathcal{I}_3$ can be treated in the same way. We only estimate $\mathcal{I}_2$ for simplicity. 
We can use that $|u|\le 1$ on $\mathbb{R}^N\times[-1,0]$ to infer that 
\[
\begin{split}
\Big|(J_p(u_h(x)-u_h(y))-J_p(u(x)-u(y))\, J_{\beta+1}(\delta_h u(x))\Big|&\leq C\left(1+|u_h(y)|^{p-1}+|u(y)|^{p-1}\right)\,|\delta_h u(x)|^{\beta}
\\
&\le 3\,C\,|\delta_h u(x)|^{\beta},
\end{split}
\]
where $C=C(p)>0$. As in \cite{bralinschi}, we observe that for $x\in B_{(R+r)/2}$ we have $B_{(R-r)/2}(x)\subset B_{R}$. This entails
$$
\int_{\mathbb{R}^N\setminus B_{R}}\frac{1}{|x-y|^{N+s\,p}}\,  dy\leq \int_{\mathbb{R}^N\setminus B_\frac{R-r}{2}(x)} \frac{1}{|x-y|^{N+s\,p}}\, dy\leq C(N,h_0,p,s).
$$
Hence, we obtain
\begin{align}
|\widetilde{\mathcal{I}_2}|+|\widetilde{\mathcal{I}_3}|&\leq C\,\int^{T_1}_{T_0}\int_{B_{\frac{R+r}{2}}}\frac{|\delta_h u|^{\beta}}{|h|^{1+\vartheta\beta}} \,\tau\,dx\, dt\leq C\,\int_{T_0}^{T_1}\left(1+\int_{B_{R}}\left|\frac{\delta_h u}{|h|^{\frac{1+\vartheta\beta}{\beta}}}\right|^{\frac{\beta\, q}{q-p+2}}\,dx\right)\tau \,dt \label{I3I4est}, 
\end{align}
by Young's inequality. Here $C=C(h_0,N,s,p,q,\beta)>0$ as before. 
\vskip.2cm\noindent
{\bf Step 4: Estimates of $\mathcal{I}_4$.} 
By using that $|u|\le 1$ in $\mathbb{R}^N\times [-1,0]$ and the properties of $\tau$, we get
\begin{equation}
\label{I4est2}
\begin{split}
|I_4|&=\frac{1}{\beta+1}\,\left|\int_{T_0}^{T_1}\,\int_{B_1}\frac{|\delta_h u|^{\beta+1}}{|h|^{1+\vartheta\beta}}\, \eta^p\,\tau'\, dx\, dt\right|\\
&\leq \frac{C}{\mu}\,\int_{T_0}^{T_1}\int_{B_{\frac{R+r}{2}}}\frac{|\delta_h u|^{\beta}}{|h|^{1+\vartheta\beta}} \,dx dt\leq \frac{C}{\mu}\,\int_{T_0}^{T_1}\left(1+\int_{B_{R}}\left|\frac{\delta_h u}{|h|^{\frac{1+\vartheta\beta}{\beta}}}\right|^{\frac{\beta\, q}{q-p+2}}\,dx\right)\,dt.
\end{split}
\end{equation}
In the last inequality we further used Young's inequality.
By inserting the estimates \eqref{I3I4est} and \eqref{I4est2} in \eqref{Ieq2}, using that $\tau$ is non-negative and such that $\tau =1$ on $[T_0+\mu,T_1]$, we obtain
\begin{equation}
\label{I2I3}
\begin{split}
\int_{T_0+\mu}^{T_1} &\left[\frac{|\delta_h u|^\frac{\beta-1}{p}\,\delta_h u}{|h|^\frac{1+\vartheta\,\beta}{p}}\,\eta\right]^p_{W^{s,p}(B_R)}\, dt+\frac{1}{\beta+1}\int_{B_R}\frac{|\delta_h u(x,T_1)|^{\beta+1}}{|h|^{1+\vartheta\beta}}\, \eta^p\, dx\\
&\leq C\,\int_{T_0}^{T_1} \left(\int_{B_{R}}\left|\frac{\delta_h u}{|h|^{\frac{1+\vartheta\beta}{\beta}}}\right|^{\frac{\beta\, q}{q-p+2}}\,dx+\sup_{0<|h|< h_0}\left\|\frac{\delta^2_h u}{|h|^s}\right\|_{L^q(B_{R+4h_0})}^q+1\right) dt.
\end{split}
\end{equation}
This is the parabolic counterpart of \cite[equation (4.10)]{bralinschi}. Observe that the constant $C$ now depends on $1/\mu$, as well.
\vskip.2cm\noindent
{\bf Step 5: Going back to the equation.} In this step, we can simply reproduce Step 4 of the proof of \cite[Proposition 4.1]{bralinschi}, so to obtain 
for any $0 < |\xi|,|h| < h_0$
\begin{equation}
\label{est4}
\left\|\frac{\delta_\xi \delta_h u}{|\xi|^\frac{s\,p}{\beta-1+p}|h|^\frac{1+\vartheta\,\beta}{\beta-1+p}}\right\|^{\beta-1+p}_{L^{\beta-1+p}(B_r)}\leq C\,\left[\frac{|\delta_h u|^\frac{\beta-1}{p}\,(\delta_h u)}{|h|^\frac{1+\vartheta\,\beta}{p}}\eta\right]^p_{W^{s,p}(B(R))}+C\,\left(\int_{B_{R}}\left|\frac{\delta_h u}{|h|^\frac{1+\vartheta\beta}{\beta}}\right|^\frac{q\,\beta}{q-p+2} dx+1\right),
\end{equation}
with $C=C(N,h_0,s,\beta)>0$. This is the analogous of \cite[equation (4.15)]{bralinschi}.
We then choose $\xi=h$, take the supremum over $h$ for $0<|h|< h_0$ and integrate in time. Then \eqref{est4} together with \eqref{I2I3} imply
\begin{equation}\label{almostfinal}
\begin{split}
\int_{T_0+\mu}^{T_1}\sup_{0<|h|< h_0}&\int_{B_r}\left|\frac{\delta^2_h u}{|h|^\frac{1+s\,p+\vartheta\,\beta}{\beta-1+p}}\right|^{\beta-1+p}\,dx\,dt\\
&+\frac{1}{\beta+1}\sup_{0<|h|< h_0}\int_{B_R}\frac{|\delta_h u(x,T_1)|^{\beta+1}}{|h|^{1+\vartheta\beta}} \eta^p dx\\
&\leq C\,\int_{T_0}^{T_1}\left(\sup_{0<|h|< h_0}\int_{B_R}\left|\frac{\delta_h u}{|h|^\frac{1+\vartheta\,\beta}{\beta}}\right|^\frac{q\,\beta}{q-p+2}\,dx +\sup_{0<|h|< h_0}\left\|\frac{\delta^2_h u}{|h|^s}\right\|_{L^q(B_{R+4h_0})}^q+ 1\right)dt,
\end{split}
\end{equation}
where $C=C(N,h_0,p,q,s,\beta,\mu)>0$.  
Since $(1+\vartheta\,\beta)/\beta< 1$, we can replace the first order difference quotients in the right-hand side of \eqref{almostfinal} with second order ones, just by using \cite[Lemma 2.6]{bralinschi}. This gives
\begin{equation}
\label{finalbeforeexp}
\begin{split}
&\int_{T_0+\mu}^{T_1}\sup_{0<|h|< h_0}\int_{B_r}\left|\frac{\delta^2_h u}{|h|^\frac{1+s\,p+\vartheta\,\beta}{\beta-1+p}}\right|^{\beta-1+p}\,dx\,dt+\frac{1}{\beta+1}\sup_{0<|h|< h_0}\int_{B_R}\frac{|\delta_h u(x,T_1)|^{\beta+1}}{|h|^{1+\vartheta\beta}} \eta^p dx\\
&\leq C\,\int_{T_0}^{T_1}\left(\sup_{0<|h|< h_0}\int_{B_R}\left|\frac{\delta^2_h u}{|h|^\frac{1+\vartheta\,\beta}{\beta}}\right|^\frac{q\,\beta}{q-p+2}\,dx +\sup_{0<|h|< h_0}\left\|\frac{\delta^2_h u}{|h|^s}\right\|_{L^q(B_{R+4h_0})}^q+ 1\right)dt,
\end{split}
\end{equation}
for some constant $C=C(N,h_0,p,q,s,\beta,\mu)>0$.
\vskip.2cm\noindent
{\bf Step 6: Conclusion for $T_1<0$.}
As in the final step of the step of \cite[Proposition 4.1]{bralinschi}, we now fix
$$
\beta=q-p+2\qquad \mbox{ and }\qquad \vartheta=\frac{(q-p+2)\,s-1}{q-p+2},
$$
where $q\ge p$ is as in the statement. These choices assure that
\[
\frac{1+s\,p+\vartheta\,\beta}{\beta-1+p}=\frac{s}{q+1}+s,
\]
\[
\beta-1+p=q+1,\qquad \frac{q\,\beta}{q-p+2}=q,
\]
and
\[
1+\vartheta\,\beta=(q-p+2)\,s,\qquad\frac{1+\vartheta\,\beta}{\beta}=s.
\]
Then \eqref{finalbeforeexp} becomes
\[
\begin{split}
\int_{T_0+\mu}^{T_1}\sup_{0<|h|< h_0}\left\|\frac{\delta^2_h u}{|h|^{\frac{s}{q+1}+s}}\right\|_{L^{q+1}(B_r)}^{q+1}dt&+\frac{1}{q+3-p}\sup_{0<|h|< h_0}\left\|\frac{\delta_h u(x,T_1)}{|h|^{\frac{(q+2-p)s}{q+3-p}}}\right\|_{L^{q+3-p}(B_r)}^{q+3-p}\\
&\leq C\,\int_{T_0}^{T_1}\left(\sup_{0<|h|< h_0}\left\|\frac{\delta^2_h u }{|h|^s}\right\|_{L^q(B_{R+4h_0})}^q+1\right)dt, 
\end{split}
\]
where $C=C(N,h_0,p,q,s)>0$. Up to a suitable modification of the constant $C$, we obtain in particular
\[
\begin{split}
\int_{T_0+\mu}^{T_1}\sup_{0<|h|< h_0}\left\|\frac{\delta^2_h u}{|h|^{s}}\right\|_{L^{q+1}(B_{R-4\,h_0})}^{q+1}dt &+\frac{1}{q+3-p}\sup_{0<|h|< h_0}\left\|\frac{\delta_h u(\cdot,T_1)}{|h|^{\frac{(q+2-p)\,s}{q+3-p}}}\right\|_{L^{q+3-p}(B_{R-4\,h_0})}^{q+3-p}\\
&\leq C\,\int_{T_0}^{T_1}\left(\sup_{0<|h|< h_0}\left\|\frac{\delta^2_h u }{|h|^s}\right\|_{L^q(B_{R+4h_0})}^q+1\right)dt,
\end{split}
\]
as desired. Observe that we used that $r=R-4\,h_0$.
\vskip.2cm\noindent
{\bf Step 7: Conclusion for $T_1=0$.} In this case, the previous proof does not directly work because it relies on Lemma \ref{Mlemreg}, which needed $T_1<0$. However, the constant $C$ in \eqref{iteralo} does not depend on $T_1$, we can thus use a limit argument. By assumption, we have that for some $q\ge p$ and $0<h_0<1/10$, it holds
\[
\int_{T_0}^{0}\sup_{0<|h|< h_0}\left\|\frac{\delta^2_h u }{|h|^s}\right\|_{L^q(B_{R+4\,h_0})}^q dt<+\infty,
\]
for a radius $4\,h_0<R\le 1-5\,h_0$ and a time instant $-1<T_0<0$. We fix $0<\mu<-T_0$, then for every $T<0$ such that $\mu+T_0<T$ we have
from {\bf Step 6}
\begin{equation}
\label{porconi}
\begin{split}
\int_{T_0+\mu}^{T}\sup_{0<|h|< h_0}\left\|\frac{\delta^2_h u}{|h|^{s}}\right\|_{L^{q+1}(B_{R-4\,h_0})}^{q+1}\,dt &+\frac{1}{q+3-p}\sup_{0<|h|< h_0}\left\|\frac{\delta_h u(\cdot,T)}{|h|^{\frac{(q+2-p)s}{q+3-p}}}\right\|_{L^{q+3-p}(B_{R-4\,h_0})}^{q+3-p}\\
&\leq C\,\int_{T_0}^{0}\left(\sup_{0<|h|< h_0}\left\|\frac{\delta^2_h u }{|h|^s}\right\|_{L^q(B_{R+4h_0})}^q+1\right)dt.
\end{split}
\end{equation}
We then observe that 
\begin{equation}
\label{porconi1}
\lim_{T\to 0^-}\int_{T_0+\mu}^{T}\sup_{0<|h|< h_0}\left\|\frac{\delta^2_h u}{|h|^{s}}\right\|_{L^{q+1}(B_{R-4\,h_0})}^{q+1}\,dt =\int_{T_0+\mu}^{0}\sup_{0<|h|< h_0}\left\|\frac{\delta^2_h u}{|h|^{s}}\right\|_{L^{q+1}(B_{R-4\,h_0})}^{q+1}\,dt,
\end{equation}
by the Dominated Convergence Theorem. As for the second term on the left-hand side, we know by definition of local weak solution that 
\[
t\mapsto \frac{\delta_h u(\cdot,t)}{|h|^{\frac{(q+2-p)\,s}{q+3-p}}},
\] 
is a continuous function on $(-2,0]$, with values in $L^2(B_{R-4\,h_0})$, for every fixed $0<|h|<h_0$. Thus 
\[
\lim_{T\to 0^-}\left\|\frac{\delta_h u(\cdot,T)}{|h|^{\frac{(q+2-p)s}{q+3-p}}}-\frac{\delta_h u(\cdot,0)}{|h|^{\frac{(q+2-p)s}{q+3-p}}}\right\|_{L^2(B_{R-4\,h_0})}=0.
\]
This in turn implies that\footnote{We use the following standard fact: if $\{f_n\}_{n\in\mathbb{N}}$ converges to $f$ in $L^\alpha(E)$, then
\[
\liminf_{n\to\infty} \|f_n\|_{L^\beta(E)}\ge \|f\|_{L^\beta(E)},
\]  
for any $\beta\not =\alpha$.} 
\begin{equation}
\label{porconi2}
\liminf_{T\to 0^-}\left\|\frac{\delta_h u(\cdot,T)}{|h|^{\frac{(q+2-p)s}{q+3-p}}}\right\|_{L^{q+3-p}(B_{R-4\,h_0})}^{q+3-p}\ge \left\|\frac{\delta_h u(\cdot,0)}{|h|^{\frac{(q+2-p)s}{q+3-p}}}\right\|_{L^{q+3-p}(B_{R-4\,h_0})}^{q+3-p},
\end{equation}
for every $0<|h|<h_0$. By using \eqref{porconi1} and \eqref{porconi2} in \eqref{porconi}, we get the desired conclusion for $T_1=0$, as well.
\end{proof}
As in \cite[Theorem 4.2]{bralinschi}, by iterating the previous result, we can obtain the following regularity estimate.
\begin{teo}[Spatial almost $C^s$ regularity]
\label{teo:1}
Let $\Omega\subset\mathbb{R}^N$ be a bounded and open set, $I=(t_0,t_1]$, $p\geq 2$ and $0<s<1$. Suppose $u$ is a local weak solution of 
\[
u_t+(-\Delta_p)^s u=0\qquad \mbox{ in }\Omega\times I,
\]
such that $u\in L^\infty_{\rm loc}(I;L^\infty(\mathbb{R}^N))$.
Then $u\in C^\delta_{x,\rm loc}(\Omega\times I)$ for every $0<\delta<s$. 
\par
More precisely, for every $0<\delta<s$, $R>0$ and every $(x_0,T_0)$ such that 
\[
Q_{2R,2R^{s\,p}}(x_0,T_0)\Subset\Omega\times (t_0,t_1],
\] 
there exists a constant $C=C(N,s,p,\delta)>0$ such that
\begin{equation}
\label{apriori1}
\begin{split}
\sup_{t\in \left[T_0-\frac{R^{s\,p}}{2},T_0\right]} [u(\cdot,t)]_{C^\delta(B_{R/2}(x_0))}&\leq \frac{C}{R^\delta}\,\left(\|u\|_{L^\infty(\mathbb{R}^N\times [T_0-R^{s\,p},T_0])}+1\right)\\
&+\frac{C}{R^\delta}\,\left(R^{-N}\,\int_{T_0-\frac{7}{8}\,R^{s\,p}}^{T_0} [u]^p_{W^{s,p}(B_{R}(x_0))}\,dt \right)^\frac{1}{p}
\end{split}
\end{equation}
\end{teo}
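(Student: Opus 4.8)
The strategy is exactly the one sketched before the statement: iterate the integrability gain of Proposition \ref{prop:improve} a finite number of times and then close with a Morrey-type embedding, adapting to the parabolic setting the elliptic scheme of \cite[Theorem 4.2]{bralinschi}.

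\emph{Reduction.} First I would normalize. Using the translation invariance of the equation and the scalings of Remark \ref{oss:scalings} — in particular $u\mapsto\mu\,u(\lambda x,\mu^{p-2}\lambda^{sp}t)$ with $\lambda=R$ and $\mu$ a suitable multiple of $(\|u\|_{L^\infty}+1)^{-1}$ — supplemented, for the part of $[T_0-R^{sp}/2,T_0]$ that a single such rescaling does not reach, by a covering of that interval by translates of the rescaled cylinder, it suffices to prove: if $u$ is a local weak solution in $B_2\times(-2,0]$ with $\|u\|_{L^\infty(\mathbb{R}^N\times[-1,0])}\le1$, then for every $\delta\in(0,s)$
\[
\sup_{t\in[-1/2,0]}[u(\cdot,t)]_{C^\delta(B_{1/2})}\ \le\ C\Big(1+\Big(\int_{-7/8}^0[u]^p_{W^{s,p}(B_1)}\,dt\Big)^{1/p}\Big),\qquad C=C(N,s,p,\delta).
\]
Unravelling the scaling produces \eqref{apriori1}; since the left-hand side is a supremum over $t$, the covering in time does not worsen the constant, although the precise way the power of $\|u\|_\infty+1$ is distributed requires some care.

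\emph{Base case and iteration.} By the definition of local weak solution, $u\in L^p([-7/8,0];W^{s,p}_{\rm loc}(B_2))$; the (localised) inclusion $W^{s,p}\hookrightarrow\mathcal{N}^{s,p}_\infty$, the elementary bound $\|\delta^2_h u\|_{L^q}\le2\|\delta_h u\|_{L^q}$ and $\|u\|_\infty\le1$ then give, for some $r_0<1$ close to $1$ and every small $h_0$,
\[
\int_{-7/8}^0\sup_{0<|h|<h_0}\Big\|\frac{\delta^2_h u}{|h|^s}\Big\|_{L^p(B_{r_0})}^p\,dt\ \le\ C\Big(\int_{-7/8}^0[u]^p_{W^{s,p}(B_1)}\,dt+1\Big)<+\infty,
\]
which is the hypothesis of Proposition \ref{prop:improve} with $q=p$. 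Given $\delta\in(0,s)$, I fix $k\in\mathbb{N}$ so large that $\dfrac{(k+1)\,s-N}{k+2}>\delta$ (the left-hand side tends to $s$ as $k\to\infty$), and then $h_0,\mu>0$ small, depending on $k$, so that after $k$ iterations the spatial ball still contains $B_{1/2}$ and the time interval is still contained in $[-1/2,0]$; this forces $h_0$, $\mu$, hence the constant, to degenerate as $\delta\uparrow s$. Applying Proposition \ref{prop:improve} $k$ times starting from the base case — at the $j$-th step with input exponent $q=p+j-1$, using that the estimate holds for every final time $T\in(-1,0]$ with a constant independent of $T$ (Step 7 of that proposition), that $\tfrac{s}{q+1}+s\ge s$ so the left-hand side of \eqref{iteralo} controls the quantity with $|h|^s$ in the denominator and thus re-enters the scheme, and \cite[Lemma 2.6]{bralinschi} to keep converting first into second differences — I obtain, for every $T\in[-1/2,0]$,
\[
\sup_{0<|h|<h_0}\Big\|\frac{\delta_h u(\cdot,T)}{|h|^{\sigma_k}}\Big\|_{L^{n_k}(B_{1/2})}\ \le\ C\Big(\int_{-7/8}^0[u]^p_{W^{s,p}(B_1)}\,dt+1\Big)^{1/n_k},\qquad \sigma_k=\frac{(k+1)\,s}{k+2},\quad n_k=k+2,
\]
the boundary term in \eqref{iteralo} being exactly what produces this pointwise-in-time estimate. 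Since for $|h|\ge h_0$ the quotient is bounded by $h_0^{-\sigma_k}\|u\|_\infty$, this says $u(\cdot,T)\in\mathcal{N}^{\sigma_k,n_k}_\infty(B_{1/2})$ uniformly in $T\in[-1/2,0]$, with norm bounded by $C\,(1+(\int_{-7/8}^0[u]^p_{W^{s,p}(B_1)}\,dt)^{1/p})$.

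\emph{Conclusion and main obstacle.} A Morrey-type embedding for the spaces $\mathcal{N}^{\beta,q}_\infty$ (cf. \cite{bralinschi}) now applies: since $0<\sigma_k-N/n_k<1$ one has $\mathcal{N}^{\sigma_k,n_k}_\infty(B_{1/2})\hookrightarrow C^{\sigma_k-N/n_k}(B_{1/2})\hookrightarrow C^{\delta}(B_{1/2})$, the last inclusion because $\sigma_k-N/n_k>\delta$; taking the supremum over $T\in[-1/2,0]$ gives the normalized estimate above, and reversing the normalization gives \eqref{apriori1}. The only substantial input is Proposition \ref{prop:improve}, which I take as given; the remaining difficulties are purely organizational — choosing $k$, $h_0$, $\mu$ so that the iteration closes without the spatial ball or the time window collapsing (hence the $\delta$-dependence of the constant), extracting the bound uniformly in the final time $T$ through the boundary term of \eqref{iteralo}, and tracking the power of $\|u\|_\infty+1$ across the scaling and covering. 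It is worth noting that each application of \eqref{iteralo} returns differentiability $\tfrac{s}{q+1}+s>s$, so nothing is lost between consecutive steps: the differentiability drops slightly below $s$ only through the boundary exponent $\tfrac{(q+2-p)\,s}{q+3-p}$ at the last step, and returns to $s$ in the limit $k\to\infty$.
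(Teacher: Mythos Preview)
Your approach is correct and essentially the same as the paper's: iterate Proposition~\ref{prop:improve} finitely many times to boost the integrability of $\delta^2_h u/|h|^s$, extract the pointwise-in-time boundary term at the final step, and close with the Morrey-type embedding of \cite[Theorem~2.8]{bralinschi}. The only differences are organizational: the paper normalizes by the full quantity $\mathcal{M}_R=\|u\|_{L^\infty}+\big(R^{-N}\int[u]^p_{W^{s,p}(B_R)}\,dt\big)^{1/p}+1$, so that the rescaled solution satisfies both bounds in \eqref{assumption} at once and the normalized estimate has right-hand side simply $C(N,s,p,\delta)$ (this sidesteps the constant-tracking you flag and yields \eqref{apriori1} directly upon unscaling); and it applies the Morrey embedding on $\mathbb{R}^N$ after multiplying by a cutoff $\chi\in C_0^\infty(B_{9/16})$, rather than invoking a local version on $B_{1/2}$.
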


\begin{proof}  We assume for simplicity that $x_0=0$ and $T_0=0$, then we set
\[
\mathcal{M}_R = \|u\|_{L^\infty(\mathbb{R}^N\times [-R^{s\,p},0])} +\left(R^{-N}\,\int_{-\frac{7}{8}\,R^{s\,p}}^{0} [u]^p_{W^{s,p}(B_{R})}\,dt \right)^\frac{1}{p}+1.
\]
Let $\alpha\in [-R^{s\,p}(1-\mathcal{M}_R^{2-p}),0]$ and set
\[
u_{R,\alpha}(x,t):=\frac{1}{\mathcal{M}_R}\,u\left(R\,x,\frac{1}{\mathcal{M}_R^{p-2}}\,R^{s\,p}\,t+\alpha\right),\qquad \mbox{ for }x\in B_2,\ t\in (-2,0].
\]
By taking into account the scaling properties of our equation (see Remark \ref{oss:scalings}), the function $u_{R,\alpha}$ 
is a local weak solution of 
\[
u_t+(-\Delta_p)^s u=0,\qquad \mbox{ in } B_2\times (-2,0],
\]
 and satisfies
\begin{equation}
\label{assumption}
\|u_{R,\alpha}\|_{L^\infty(\mathbb{R}^N\times [-1,0])}\leq 1,\qquad  \int_{-\frac{7}{8}}^0 [u_{R,\alpha}]^p_{W^{s,p}(B_1)}\, dt \leq 1.
\end{equation}
We will prove that $u_{R,\alpha}$ 
satifies the estimate
\[
\sup_{t\in [-1/2,0]}[u_{R,\alpha}(\cdot,t)]_{C^{\delta}(B_{1/2})}\leq C,
\]
for $C=C(N,s,p,\delta)>0$ independent of $\alpha$. 
By scaling back, this would give
\[
\sup_{\alpha-\frac12\mathcal{M}_R^{2-p}\,R^{s\,p}\le t\le 0}[u(\cdot,t)]_{C^{\delta}(B_{R/2})}\leq \frac{C}{R^\delta}\mathcal{M}_R.
\] 
Since $\alpha\in [-R^{s\,p}(1-\mathcal{M}_R^{2-p}),0]$ and $\mathcal{M}_R^{2-p}\le 1$, this in turn would imply
\[
\sup_{t\in \left[-\frac{R^{s\,p}}{2},0\right]} [u(\cdot,t)]_{C^\delta(B_{R/2}(x_0))}\leq \frac{C}{R^\delta}\,\left(\|u\|_{L^\infty(\mathbb{R}^N\times [-R^{s\,p},0])} +\left(R^{-N}\,\int_{-R^{s\,p}}^0 [u]^p_{W^{s,p}(B_{R}(x_0))}\,dt \right)^\frac{1}{p}+1\right),
\]
which is the desired result.
In what follows, we suppress the subscript ${R,\alpha}$ and simply write $u$ in place of $u_{R,\alpha}$, in order not to overburden the presentation.
\vskip.2cm\noindent
We fix $0<\delta<s$ and choose $i_\infty\in\mathbb{N}\setminus\{0\}$ such that
\[
\delta<s\,\frac{2+i_\infty}{3+i_\infty}- \frac{N}{3+i_\infty}.
\]
Then we define the sequence of exponents  
\[
q_i=p+i,\qquad i=0,\dots,i_\infty.
\]
We define also 
$$
h_0=\frac{1}{64\,i_\infty},\qquad R_i=\frac{7}{8}-4\,(2i+1)\,h_0=\frac{7}{8}-\frac{2i+1}{16\,i_\infty},\qquad \mbox{ for } i=0,\dots,i_\infty.
$$
We note that 
\[
R_0+4\,h_0=\frac{7}{8}\qquad \mbox{ and }\qquad R_{i_\infty-1}-4\,h_0=\frac{3}{4}.
\] 
By applying Proposition \ref{prop:improve} (ignoring the second term in the left-hand side of \eqref{iteralo}) with\footnote{We observe that by construction we have
\[
4\,h_0<R_i\le 1-5\,h_0,\qquad \mbox{ for } i=0,\dots,i_\infty-1.
\]
Thus these choices are admissible in Proposition \ref{prop:improve}.} 
\[
T_1=0,\qquad T_0=-R_i-4\,h_0,\qquad \mu=8\,h_0,
\]
and
\[
R=R_i\qquad \mbox{ and }\qquad q=q_i=p+i,\qquad \mbox{ for } i=0,\ldots,i_\infty-1,
\] 
and observing that $R_i-4\,h_0=R_{i+1}+4\,h_0$,
we obtain the iterative scheme of inequalities:
\begin{itemize}
\item for $i=0$
\[
\int_{-(R_1+4h_0)}^0 \sup\limits_{0<|h|< h_0}\left\|\dfrac{\delta^2_h u}{|h|^{s}}\right\|_{L^{q_1}(B_{R_1+4h_0})}^{q_1}dt \leq C\,\displaystyle\int_{-\frac{7}{8}}^0 \sup\limits_{0<|h|< h_0}\left(\left\|\dfrac{\delta^2_h u }{|h|^s}\right\|_{L^p(B_{7/8})}^p+1\right)dt\\
\]
\item for $i=1,\dots,i_\infty-2$
\[
\int_{-(R_{i+1}+4h_0)}^0 \sup\limits_{0<|h|< h_0}\left\|\dfrac{\delta^2_h u}{|h|^{s}}\right\|_{L^{q_i+1}(B_{R_{i+1}+4h_0})}^{q_i+1}dt \leq C\,\int_{-(R_{i}+4h_0)}^0\sup\limits_{0<|h|< h_0}\left(\left\|\dfrac{\delta^2_h u }{|h|^s}\right\|_{L^{q_i}(B_{R_i+4h_0})}^{q_i}+1\right)dt,
\]
\item finally, for $i=i_\infty-1$
\[
\begin{split}
\int_{-\frac{3}{4}}^0\sup_{0<|h|< {h_0}}\left\|\frac{\delta^2_h u}{|h|^{s}}\right\|_{L^{q_{i_\infty}}(B_{3/4})}^{q_{i_\infty}}dt&=\int_{-(R_{i_\infty-1}-4h_0)}^0\sup_{0<|h|< h_0}\left\|\frac{\delta^2_h u}{|h|^{s}}\right\|_{L^{p+i_\infty}(B_{R_{i_\infty-1}-4h_0})}^{p+i_\infty}dt \\&\leq C\,\int_{-(R_{i_\infty-1}+4h_0)}^0\sup_{0<|h|< h_0}\left(\left\|\frac{\delta^2_h u }{|h|^s}\right\|_{L^{p+i_\infty-1}(B_{R_{i_\infty-1}+4h_0})}^{p+i_\infty-1}+1\right)\,dt.
\end{split}
\]
\end{itemize}
Here $C=C(N,\delta,p,s)>0$ as always. We note that by using the relation
\[
\delta^2_{h}u=\delta_{2h} u-2\,\delta_h u,
\]
and then appealing to \cite[Proposition 2.6]{Brolin},
we have
\begin{align}
\label{eq:1sttofrac}
\int_{-\frac{7}{8}}^0\sup_{0<|h|< h_0}\left\|\frac{\delta^2_h u }{|h|^s}\right\|_{L^{p}(B_{7/8})}^pdt&\leq C\,\int_{-\frac{7}{8}}^0\sup_{0<|h|<2\,h_0}\left\|\frac{\delta_h u }{|h|^s}\right\|_{L^{p}(B_{7/8})}^pdt\nonumber \\
&\leq C\left(\int_{-\frac{7}{8}}^0[u]_{W^{s,p}(B_{7/8+2\,h_0})}^p\, dt+\int_{-\frac{7}{8}}^0\|u\|_{L^\infty(B_{7/8+2\,h_0})}\,dt\right)\\
&\leq C\left(\int_{-\frac{7}{8}}^0[u]_{W^{s,p}(B_1)}\,dt+\int_{-\frac{7}{8}}^0\|u\|_{L^\infty(B_1)}^p\, dt\right)\leq C(N,\delta,s,p),\nonumber 
\end{align}
where we also have used the assumptions \eqref{assumption} on $u$. Hence, the iterative scheme of inequalities leads us to
\[
\int_{-\frac{3}{4}}^0\sup_{0<|h|< {h_0}}\left\|\frac{\delta^2_h u}{|h|^{s}}\right\|_{L^{q_{i_\infty}}(B_{3/4})}^{q_{i_\infty}}dt\leq C(N,\delta,p,s).
\]
It is now time to exploit the full power of Proposition \ref{prop:improve}: we apply it once more, with 
\[
T_0+\mu=-3/4,\qquad -\frac{1}{2}\le T_1\le0,
\]
\[
q=q_{i_\infty},\qquad R+4\,h_0=6/8\qquad  \mbox{ and }\qquad R-4\,h_0=6/8-8\,h_0>5/8.
\] 
We obtain (ignoring the first term in the left-hand side of \eqref{iteralo}, this time)
$$
\sup_{0<|h|< h_0}\left\|\frac{\delta_h u(\cdot,T_1)}{|h|^{\frac{(q_{i_\infty}+2-p)\,s}{q_{i_\infty}+3-p}}}\right\|_{L^{q_{i_\infty}+3-p}(B_{5/8})}^{q_{i_\infty}+3-p}\leq \int_{-3/4}^0\sup_{0<|h|< {h_0}}\left\|\frac{\delta^2_h u}{|h|^{s}}\right\|_{L^{q_{i_\infty}}(B_{3/4})}^{q_{i_\infty}}\,dt\leq C(N,\delta,p,s).
$$
Since this is valid for every $-1/2\le T_1\le 0$, this in turn implies that 
\begin{equation}
\label{otherest1}
\sup_{t\in [-1/2,0]}\sup_{0<|h|<h_0} \left\|\frac{\delta_h u}{|h|^{\frac{(q_{i_\infty}+2-p)\,s}{q_{i_\infty}+3-p}}}\right\|_{L^{q_{i_\infty}+3-p}(B_{5/8})}^{q_{i_\infty}+3-p}\le C(N,\delta,p,s).
\end{equation}
Take now $\chi\in C_0^\infty(B_{9/16})$ such that 
$$
0\leq \chi\leq 1, \qquad \chi =1 \text{ in $B_{1/2}$},\qquad |\nabla \chi|\leq C,\qquad |D^2 \chi|\leq C.
$$
In particular, we have for all $|h| > 0$
$$
\frac{|\delta_h\chi|}{|h|^\frac{(q_{i_\infty}+2-p)\,s}{q_{i_\infty}+3-p}}\leq C.
$$
We also recall that
$$
\delta_h (u\,\chi)=\chi_{h}\,\delta_h u+u\,\delta_h\chi.
$$
Hence, for $0<|h|< h_0$ and any $t\in [-5/8,0]$
\begin{align}\label{Neq}
\left\|\frac{\delta_h (u\,\chi)}{|h|^\frac{(q_{i_\infty}+2-p)\,s}{q_{i_\infty}+3-p}}\right\|_{L^{q_{i_\infty+3-p}}(\mathbb{R}^N)}&\leq C\,\left(\left\|\frac{\chi_{h}\,\delta_h u}{|h|^\frac{(q_{i_\infty}+2-p)s}{q_{i_\infty}+3-p}}\right\|_{L^{q_{i_\infty+3-p}}(\mathbb{R}^N)}+\left\|\frac{u\,\delta_h\chi}{|h|^\frac{(q_{i_\infty}+2-p)s}{q_{i_\infty}+3-p}}\right\|_{L^{q_{i_\infty+3-p}}(\mathbb{R}^N)}\right) \nonumber\\
&\leq C\,\left(\left\|\frac{\delta_h u}{|h|^\frac{(q_{i_\infty}+2-p)s}{q_{i_\infty}+3-p}}\right\|_{L^{q_{i_\infty+3-p}}(B_{9/16+\,h_0})}+\|u\|_{L^{q_{i_\infty+3-p}}(B_{9/16+h_0})}\right) \\
&\leq C\,\left(\left\|\frac{\delta_h u}{|h|^\frac{(q_{i_\infty}+2-p)s}{q_{i_\infty}+3-p}}\right\|_{L^{q_{i_\infty+3-p}}(B_{5/8})}+\|u\|_{L^{q_{i_\infty+3-p}}(B_{5/8})}\right)\leq C(N,\delta,p,s), \nonumber
\end{align}
by \eqref{otherest1}.
Finally, by noting that thanks to the choice of $i_\infty$ we have
\[
s\,(q_{i_\infty}+2-p)>N\qquad \mbox{and }\qquad \delta<\frac{(q_{i_\infty}+2-p)\,s}{q_{i_\infty}+3-p}-\frac{N}{q_{i_\infty}+3-p},
\] 
we may invoke the Morrey-type embedding of \cite[Theorem 2.8]{bralinschi} with 
\[
\beta=\frac{(q_{i_\infty}+2-p)\,s}{q_{i_\infty}+3-p},\qquad \alpha=\delta \qquad \mbox{ and }\qquad q=q_{i_\infty}+3-p.
\]
Thus we obtain
$$
[u(\cdot,t)]_{C^\delta(B_{1/2})}= [u\,\chi]_{C^\delta(B_{1/2})}\leq C\left([u\,\chi(\cdot,t)]_{\mathcal{N}_\infty^{\beta,q}(\mathbb{R}^N)}\right)^{\frac{\alpha\, q+N}{\beta q}}\,\left(\|u(\cdot,t)\,\chi\|_{L^q(\mathbb{R}^N)}\right)^\frac{(\beta-\alpha)\,q-N}{\beta q}\leq C(N,\delta,p,s),
$$
for any $t\in [-1/2,0]$, where we used \eqref{Neq}. This concludes the proof.
\end{proof}
\begin{oss}
\label{oss:flexibility}
Under the assumptions of the previous theorem, a covering argument combined with \eqref{apriori} implies the more flexible estimate: for every $0<\sigma<7/8$
\[
\begin{split}
\sup_{t\in[T_0-\sigma R^{s\,p},T_0]}[u(\cdot,t)]_{C^\delta(B_{\sigma R}(x_0))}&\leq \frac{C}{R^\delta}\,\left(\|u\|_{L^\infty(B_{R}(x_0)\times[T_0-R^{s\,p},T_0])}+1\right)\\
&+\frac{C}{R^\delta}\,\left(R^{-N}\int_{T_0-\frac{7}{8}\,R^{s\,p}}^{T_0}[u]_{W^{s,p}(B_{R}(x_0))}^pdt\right)^{\frac{1}{p}},
\end{split}
\]
with $C$ now depending on $\sigma$ as well (and blowing-up as $\sigma\nearrow 7/8$). Indeed, if $\sigma\le 1/2$ then this is immediate.
If $1/2<\sigma<7/8$, then we can cover $Q_{\sigma R,\sigma R^{s\,p}}(x_0,T_0)$ with a finite number of cylinders 
\[
Q_{r/2,r^{s\,p}/2}(x_i,t_j)=B_{r/2}(x_i)\times\left(t_j-\frac{r^{s\,p}}{2},t_j\right],\qquad \mbox{ for } 1\le i\le k, \, 1\le j\le m,
\]
where
\[
x_i\in B_{\sigma R}(x_0),\qquad T_0-\sigma\,R^{s\,p}\le t_j\le T_0,
\]
and $r=R/C_{\sigma,s,p}>0$ is a suitable radius, such that 
\[
B_r(x_i)\subset B_R(x_0),\quad B_{2\,r}(x_i)\Subset \Omega,
\]
and
\[
[t_j-r^{s\,p},t_j]\subset \left[T_0-\frac{7}{8}\,R^{s\,p},T_0\right],\quad [t_j-2\,r^{s\,p},t_j]\Subset I. 
\]
By using \eqref{apriori1} on each of these cylinders, we get
\[
\begin{split}
\sup_{t\in\left[t_j-\frac{r^{s\,p}}{2},t_j\right]}[u(\cdot,t)]_{C^\delta(B_{r/2}(x_i))}&\leq \frac{C}{r^\delta}\,\left(\|u\|_{L^\infty(\mathbb{R}^N\times[t_j-r^{s\,p},t_j])}+\left(r^{-N}\int_{t_j-r^{s\,p}}^{t_j}[u]_{W^{s,p}(B_{r}(x_i))}^p\,dt\right)^{\frac{1}{p}}+1\right).
\end{split}
\]
By taking the supremum over $1\le i\le k$ and $1\le j\le m$, we get the desired conclusion. 
\end{oss}

\section{Improved spatial H\"older regularity}
\label{sec:higher}
Once we know that solutions are locally spatially $\delta-$H\"older continuous for any $0<\delta<s$, we can obtain the following improvement of Proposition \ref{prop:improve}. The latter provided a recursive gain of integrability. In contrast, the next result provides a gain which is interlinked between differentiability and integrability.
\begin{prop}\label{prop:improve2}
Assume $p\ge 2$ and $0<s<1$. Let  $u$ be a local weak solution of $u_t+(-\Delta_p)^s u=0$ in $B_2\times (-2,0]$, such that
\[
\|u\|_{L^\infty(\mathbb{R}^N\times [-1,0])}\leq 1. 
\]
Assume further that for some $0<h_0<1/10$ and $\vartheta<1$, $\beta\ge 2$ such that $(1+\vartheta\, \beta)/\beta<1$, we have
\[
\int_{T_0}^{T_1}\sup_{0<|h|\leq h_0}\left\|\frac{\delta^2_h u }{|h|^\frac{1+\vartheta\, \beta}{\beta}}\right\|_{L^\beta(B_{R+4\,h_0})}^\beta dt<+\infty.
\]
for a radius $4\,h_0<R\le 1-5\,h_0$ and two time instants $-1<T_0< T_1\le 0$. Then it holds
\begin{equation}
\label{iteralo2}
\begin{split}
\int_{T_0+\mu}^{T_1}\sup_{0<|h|< h_0}\left\|\frac{\delta^2_h u}{|h|^{\frac{1+s\,p+\vartheta\,\beta}{\beta-1+p}}}\right\|_{L^{\beta-1+p}(B_{R-4\,h_0})}^{\beta-1+p}dt&+\frac{1}{\beta+1}\sup_{0<|h|< h_0}\left\|\frac{\delta_h u(\cdot,T_1)}{|h|^{\frac{1+\vartheta\,\beta}{\beta+1}}}\right\|_{L^{\beta+1}(B_{R-4\,h_0})}^{\beta+1}\\
&\leq C\,\int_{T_0}^{T_1}\sup_{0<|h|< h_0}\left(\left\|\frac{\delta^2_h u }{|h|^\frac{1+\vartheta\, \beta}{\beta}}\right\|_{L^\beta(B_{R+4\,h_0})}^\beta+1\right) dt.
\end{split}
\end{equation}
for every $0<\mu<T_1-T_0$.
Here $C$ depends on the $N$, $h_0$, $s$, $p$, $\mu$ and $\beta$.

\end{prop}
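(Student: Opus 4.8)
The plan is to follow, nearly verbatim, the seven-step proof of Proposition~\ref{prop:improve}, the essential difference being that we keep the parameters $\beta$ and $\vartheta$ general instead of specializing them at the end. First I would apply Lemma~\ref{Mlemreg} with $F=J_{\beta+1}$ (locally Lipschitz since $\beta\ge 1$), with $\eta$ a non-negative Lipschitz cut-off supported in $B_{(R+r)/2}$, where $r:=R-4\,h_0$, such that $\eta\equiv 1$ on $B_r$ and $|\nabla\eta|\le C/h_0$, and with $\tau$ a time cut-off equal to $1$ on $[T_0+\mu,+\infty)$, vanishing on $(-\infty,T_0]$ and with $|\tau'|\le C/\mu$. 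Dividing the resulting identity by $|h|^{1+\vartheta\,\beta}$ and splitting the double integral over $\mathbb{R}^N\times\mathbb{R}^N$ into the diagonal block $B_R\times B_R$ and the two off-diagonal blocks produces exactly the same decomposition into $\widetilde{\mathcal{I}_1},\widetilde{\mathcal{I}_2},\widetilde{\mathcal{I}_3}$ and the right-hand side term $\mathcal{I}_4$ as in that proof, together with the boundary term $\tfrac{1}{\beta+1}\int_{B_R}|\delta_h u(\cdot,T_1)|^{\beta+1}|h|^{-1-\vartheta\beta}\eta^p\,dx$. The algebraic lower bound for $\mathcal{I}_1(t)$ from \cite[Step~1 of Proposition~4.1]{bralinschi} gives, with $\mathcal{I}_{11},\mathcal{I}_{12}$ as in \eqref{I11} and \eqref{I12},
\[
\mathcal{I}_1(t)\ \ge\ c\,\Big[\,\tfrac{|\delta_h u|^{(\beta-1)/p}\,\delta_h u}{|h|^{(1+\vartheta\beta)/p}}\,\eta\,\Big]^p_{W^{s,p}(B_R)}\ -\ C\big(\mathcal{I}_{11}(t)+\mathcal{I}_{12}(t)\big).
\]

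The crux is then to bound all the error terms $\mathcal{I}_{11},\mathcal{I}_{12},\widetilde{\mathcal{I}_2},\widetilde{\mathcal{I}_3},\mathcal{I}_4$ by $C\int_{T_0}^{T_1}\big(\sup_{0<|h|<h_0}\int_{B_{R+4h_0}}\big|\tfrac{\delta^2_h u}{|h|^{(1+\vartheta\beta)/\beta}}\big|^{\beta}dx+1\big)dt$. For $\widetilde{\mathcal{I}_2},\widetilde{\mathcal{I}_3}$ and $\mathcal{I}_4$ this is routine: one uses $\|u\|_{L^\infty(\mathbb{R}^N\times[-1,0])}\le 1$ to control the $J_p$-factor and the far part of the kernel, $|\delta_h u|\le 2$ to pass from the exponent $\beta+1$ to $\beta$, Young's inequality, and — since $(1+\vartheta\beta)/\beta<1$ — \cite[Lemma~2.6]{bralinschi} to replace the first-order quotient $\delta_h u/|h|^{(1+\vartheta\beta)/\beta}$ by the second-order one modulo an additive $\|u\|^{\beta}_{L^\beta}\le C$. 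The term $\mathcal{I}_{12}$ is handled the same way, using $|\eta(x)-\eta(y)|^p\le C\,h_0^{-p}\min\{|x-y|^p,h_0^p\}$ together with the fact that $|x-y|^{p-sp-N}$ is integrable near the diagonal because $s<1$. The genuinely delicate term is $\mathcal{I}_{11}$: when $p>2$, bounding $|u(x)-u(y)|^{p-2}$ crudely by a constant does \emph{not} make the kernel $|u(x)-u(y)|^{p-2}\,|x-y|^{2-sp-N}$ integrable near the diagonal in general, and one has to argue as in \cite[Step~2 of Proposition~4.1]{bralinschi}, combining $\|u\|_{L^\infty}\le 1$ with a standard energy (Caccioppoli) estimate — which, for weak solutions satisfying $\|u\|_{L^\infty(\mathbb{R}^N\times[-1,0])}\le 1$, bounds $[u(\cdot,t)]_{W^{s,p}}$ on interior balls by a constant depending only on $N,s,p$ — and with the almost-$C^s$ spatial regularity already furnished by Theorem~\ref{teo:1}; this again produces a bound by $C\int_{B_R}\big|\tfrac{\delta_h u}{|h|^{(1+\vartheta\beta)/\beta}}\big|^{\beta}dx$, to which \cite[Lemma~2.6]{bralinschi} applies as above. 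Inserting all of this, integrating $\mathcal{I}_1(t)\,\tau(t)$ in time and using $0\le\tau\le 1$ with $\tau\equiv 1$ on $[T_0+\mu,T_1]$, one arrives at
\[
\int_{T_0+\mu}^{T_1}\Big[\,\tfrac{|\delta_h u|^{(\beta-1)/p}\,\delta_h u}{|h|^{(1+\vartheta\beta)/p}}\,\eta\,\Big]^p_{W^{s,p}(B_R)}dt\ +\ \tfrac{1}{\beta+1}\int_{B_R}\tfrac{|\delta_h u(\cdot,T_1)|^{\beta+1}}{|h|^{1+\vartheta\beta}}\,\eta^p\,dx\ \le\ C\int_{T_0}^{T_1}\sup_{0<|h|<h_0}\Big(\int_{B_{R+4h_0}}\big|\tfrac{\delta^2_h u}{|h|^{(1+\vartheta\beta)/\beta}}\big|^{\beta}dx+1\Big)dt .
\]

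Finally I would reproduce Step~5 of the proof of Proposition~\ref{prop:improve} (the ``going back to the equation'' argument of \cite[Step~4 of Proposition~4.1]{bralinschi}) to convert this Sobolev--Slobodecki\u{\i} seminorm bound into an estimate for $\delta_\xi\delta_h u$, then set $\xi=h$, take the supremum over $0<|h|<h_0$ and integrate over $[T_0+\mu,T_1]$; recalling that $(1+s\,p+\vartheta\,\beta)/(\beta-1+p)$ is exactly the differentiability exponent that comes out, and that $|h|^{1+\vartheta\beta}=|h|^{(\beta+1)(1+\vartheta\beta)/(\beta+1)}$ so that the boundary term is precisely $\|\delta_h u(\cdot,T_1)/|h|^{(1+\vartheta\beta)/(\beta+1)}\|^{\beta+1}_{L^{\beta+1}}$, this yields \eqref{iteralo2} for $T_1<0$. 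The case $T_1=0$ then follows, as in Step~7 of the proof of Proposition~\ref{prop:improve} and since $C$ does not depend on $T_1$, by letting $T\nearrow 0$ and using the Dominated Convergence Theorem for the first term on the left-hand side and the $C((-2,0];L^2_{\rm loc})$-regularity of $u$ together with lower semicontinuity of the $L^{\beta+1}$-norm under $L^2$-convergence for the boundary term. I expect the only real obstacle, compared with Proposition~\ref{prop:improve}, to be the estimate of $\mathcal{I}_{11}$ when $p>2$: there the $L^\infty$ bound alone is not enough and one genuinely has to feed in both the energy estimate and the spatial Hölder regularity of Theorem~\ref{teo:1} (for $p=2$ the factor $|u(x)-u(y)|^{p-2}$ disappears and this difficulty is absent).
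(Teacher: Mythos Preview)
Your proposal is correct and follows essentially the same route as the paper's proof. One clarification on the handling of $\mathcal{I}_{11}$: the paper uses \emph{only} the spatial H\"older regularity from Theorem~\ref{teo:1} (choosing $0<\varepsilon<\min\{2(1-s)/(p-2),\,s\}$ so that $|u(x,t)-u(y,t)|^{p-2}\,|\eta(x)^{p/2}-\eta(y)^{p/2}|^2\,|x-y|^{-N-sp}\le C\,|x-y|^{-N+2(1-s)-\varepsilon(p-2)}$ is integrable in $y$), and no Caccioppoli-type energy estimate is needed---indeed, a standard Caccioppoli inequality only yields an $L^p_t$ bound on $[u(\cdot,t)]_{W^{s,p}}$, not the pointwise-in-time control you describe.
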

\begin{proof} 
This is analogous to the proof of \cite[Proposition 5.1]{bralinschi}. As above, we will refer to \cite{bralinschi} for the main computations and only list the major changes.
\par
We first notice that it sufficient to prove \eqref{iteralo2} for $T_1<0$, with a constant independent of $T_1$. Then the same argument of {\bf Step 7} in Proposition \ref{prop:improve} will be enough to handle the case $T_1=0$, as well.
\par
We  go back to the estimates in the proof of Proposition \ref{prop:improve}. The acquired knowledge on the spatial regularity of $u$ permits to improve the estimate on the term $\mathcal{I}_{11}(t)$ defined in \eqref{I11}. From Theorem~\ref{teo:1} and Remark \ref{oss:flexibility}, we can choose 
\[
0<\varepsilon<\min\left\{2\,\frac{1-s}{p-2},\,s\right\},
\] 
such that 
$$
\sup_{t\in [T_0,T_1]}[u(\cdot,t)]_{C^{s-\varepsilon}(B_{R+h_0})}\leq C(N,h_0,p,s).
$$
Using this together with the assumed regularity of $\eta$, we have for $(x,y)\in B_{R+h_0}$ and $t\in [T_0,T_1]$
$$
\frac{|u(x,t)-u(y,t)|^{p-2}\,\left|\eta(x)^\frac{p}{2}-\eta(y)^\frac{p}{2}\right|^2}{|x-y|^{N+s\,p}}\leq C\,|x-y|^{-N+2\,(1-s)-\varepsilon\,(p-2)}.
$$
As usual, we are suppressing the time dependence. Thanks to the choice of $\varepsilon$, the last exponent is strictly larger than $-N$ and we may conclude
$$
\int_{B_R}\frac{|u(x,t)-u(y,t)|^{p-2}\,\left|\eta(x)^\frac{p}{2}-\eta(y)^\frac{p}{2}\right|^2}{|x-y|^{N+s\,p}}\, dy\leq C(N,h_0,p,s),
$$
for any $x\in B_R$. Therefore, by suppressing as before the $t-$dependence for simplicity, we have the estimate
\[
\begin{split}
| \mathcal{I}_{11}(t)|&\leq C\,\int_{B_R}\frac{|\delta_h u(x)|^{\beta+1}}{|h|^{1+\vartheta\,\beta}} dx \\
&\leq C\,\|u\|_{L^\infty(B_R)}\,\int_{B_R}\frac{|\delta_h u(x)|^{\beta}}{|h|^{1+\vartheta\,\beta}} dx\le C\,\int_{B_R}\frac{|\delta_h u(x)|^{\beta}}{|h|^{1+\vartheta\,\beta}} dx ,\qquad \mbox{ for some } C=C(N,h_0,p,s)>0.
\end{split}
\]
As for $\mathcal{I}_{12}$, by going back to its definition \eqref{I12} and using the properties of the cut-off function $\eta$, we get
$$
|\mathcal{I}_{12}(t)|\leq C\int_{B_R} \frac{|\delta_h u(x)|^{\beta-1+p}}{|h|^{1+\vartheta\,\beta}}\,dx\leq C\int_{B_R}\frac{|\delta_h u(x)|^{\beta}}{|h|^{1+\vartheta\,\beta}} dx,\qquad \mbox{ for some }C=C(N,h_0,p,s)>0,
$$
where we used the local $L^\infty$ bound on $u$, as above.
In addition, from the first inequality in \eqref{I3I4est} together with the properties of the cut-off function $\tau$, we have
$$
|\mathcal{\widetilde I}_2|+|\mathcal{\widetilde I}_3|\leq C\,\int_{T_0}^{T_1}\int_{B_R}\frac{|\delta_h u(x)|^{\beta}}{|h|^{1+\vartheta\,\beta}}\, dx\, dt, \qquad C=C(h_0,p,s).
$$
Combining these new estimates with \eqref{I4est2} and \eqref{Ieq}, we can reproduce the last part of \cite[Proposition 5.1]{bralinschi} and arrive at
\[
\begin{split}
\int_{T_0+\mu}^{T_1}\sup_{0<|h|< h_0}\int_{B_r}\left|\frac{\delta^2_h u}{|h|^\frac{1+s\,p+\vartheta\,\beta}{\beta-1+p}}\right|^{\beta-1+p}\,dx\,dt&+\frac{1}{\beta+1}\sup_{0<|h|< h_0}\left\|\frac{\delta_h u(\cdot,T_1)}{|h|^{\frac{1+\vartheta\beta}{\beta+1}}}\right\|_{L^{\beta+1}(B_{R-4\,h_0})}^{\beta+1}\\&\leq C\int_{T_0}^{T_1}\sup_{0<|h|< h_0}\int_{B_R}\left|\frac{\delta_h u}{|h|^\frac{1+\vartheta\,\beta}{\beta}}\right|^\beta\,dx\,dt,
\end{split}
\]
for some $C=C(N,h_0,p,s,\beta)>0$. By appealing again to \cite[Lemma 2.6]{bralinschi} and using that
\[
\frac{1+\vartheta\,\beta}{\beta}<1,
\] 
we may replace the first order differential quotients in the right-hand side by second order ones. This leads to
\[
\begin{split}
\int_{T_0+\mu}^{T_1}\sup_{0<|h|< h_0}\int_{B_r}\left|\frac{\delta^2_h u}{|h|^\frac{1+s\,p+\vartheta\,\beta}{\beta-1+p}}\right|^{\beta-1+p}\,dx\,dt&+\frac{1}{\beta+1}\sup_{0<|h|< h_0}\left\|\frac{\delta_h u(\cdot,T_1)}{|h|^{\frac{1+\vartheta\beta}{\beta+1}}}\right\|_{L^{\beta+1}(B_{R-4\,h_0})}^{\beta+1}\\&\leq C\int_{T_0}^{T_1}\left(\sup_{0<|h|< h_0}\int_{B_R+4\,h_0}\left|\frac{\delta^2_h u}{|h|^\frac{1+\vartheta\,\beta}{\beta}}\right|^\beta\,dx+1\right)\,dt,
\end{split}
\]
for some $C=C(N,h_0,p,s,\beta)>0$.
By recalling again that $r=R-4\,h_0$, we eventually conclude the proof.
\end{proof}

We are now ready to prove the claimed H\"older regularity in space.
\begin{teo}
\label{teo:1higher} Let $\Omega$ be a bounded and open set, let $I=(t_0,t_1]$, $p\geq 2$ and $0<s<1$. Suppose $u$ is a local weak solution of 
\[
u_t+(-\Delta_p)^s u=0\qquad \mbox{ in }\Omega\times I,
\]
such that $u\in L^\infty_{\rm loc}(I;L^\infty(\mathbb{R}^N))$.
Then $u\in C^\delta_{x,\rm loc}(\Omega\times I)$ for every $0<\delta<\Theta(s,p)$, where $\Theta(s,p)$ is defined in \eqref{exponents}.
\par
More precisely, for every $0<\delta<\Theta(s,p)$, $R>0$, $x_0\in\Omega$ and $T_0$ such that 
\[
B_{2R}(x_0)\times [T_0-2\,R^{s\,p},T_0]\Subset\Omega\times (t_0,t_1],
\] 
there exists a constant $C=C(N,s,p,\delta)>0$ such that
\begin{equation}
\label{apriori}
\begin{split}
\sup_{t\in \left[T_0-\frac{R^{s\,p}}{2},T_0\right]} [u(\cdot,t)]_{C^\delta(B_{R/2}(x_0))}&\leq \frac{C}{R^\delta}\,\left(\|u\|_{L^\infty(\mathbb{R}^N\times [T_0-R^{s\,p},T_0])} + 1\right)\\
&+\frac{C}{R^\delta}\,\left(R^{-N}\,\int_{T_0-\frac{7}{8}\,R^{s\,p}}^{T_0} [u]^p_{W^{s,p}(B_{R}(x_0))}dt \right)^\frac{1}{p} .
\end{split}
\end{equation}
\end{teo}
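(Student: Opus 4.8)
The plan is to mimic the proof of \cite[Theorem 5.2]{bralinschi}, using the parabolic gain of Proposition~\ref{prop:improve2} in place of its elliptic analogue and feeding it with the already established almost-$C^s$ regularity of Theorem~\ref{teo:1}. As in the proof of Theorem~\ref{teo:1}, I would first reduce to a normalized setting: assuming $x_0=0$ and $T_0=0$, set
\[
\mathcal{M}_R=\|u\|_{L^\infty(\mathbb{R}^N\times[-R^{s\,p},0])}+\Big(R^{-N}\int_{-\frac{7}{8}R^{s\,p}}^{0}[u]_{W^{s,p}(B_R)}^p\,dt\Big)^{1/p}+1,
\]
and, for $\alpha\in[-R^{s\,p}(1-\mathcal{M}_R^{2-p}),0]$, consider $u_{R,\alpha}(x,t)=\mathcal{M}_R^{-1}\,u\big(R\,x,\mathcal{M}_R^{2-p}R^{s\,p}\,t+\alpha\big)$, which by Remark~\ref{oss:scalings} is a local weak solution in $B_2\times(-2,0]$ with $\|u_{R,\alpha}\|_{L^\infty(\mathbb{R}^N\times[-1,0])}\le1$ and $\int_{-7/8}^{0}[u_{R,\alpha}]_{W^{s,p}(B_1)}^p\,dt\le1$. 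It then suffices to prove $\sup_{t\in[-1/2,0]}[u_{R,\alpha}(\cdot,t)]_{C^\delta(B_{1/2})}\le C(N,s,p,\delta)$ uniformly in $\alpha$, and to scale back exactly as in Theorem~\ref{teo:1}; I write $u$ for $u_{R,\alpha}$ below.

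The heart of the argument is a \emph{finite} iteration of Proposition~\ref{prop:improve2}. Running the iterative scheme from the proof of Theorem~\ref{teo:1} for sufficiently many steps shows that, for any prescribed $\beta_0\ge p$, there is $h_0\in(0,1/10)$ with
\[
\int_{-3/4}^{0}\sup_{0<|h|<h_0}\Big\|\frac{\delta_h^2u}{|h|^{s}}\Big\|_{L^{\beta_0}(B_{3/4})}^{\beta_0}\,dt<+\infty,
\]
so the iteration may be started at $\vartheta_0=s-1/\beta_0$, for which $(1+\vartheta_0\beta_0)/\beta_0=s<1$. Applying Proposition~\ref{prop:improve2} repeatedly on a decreasing family of radii and expanding time intervals (as in the iterative scheme of Theorem~\ref{teo:1}) leads to the sequences
\[
\beta_{i+1}=\beta_i+p-1,\qquad \vartheta_{i+1}\beta_{i+1}=s\,p+\vartheta_i\beta_i,
\]
so that the differentiability exponent attached to the second difference quotient at stage $i$ equals
\[
\gamma_i:=\frac{1+\vartheta_i\beta_i}{\beta_i}=s\,\frac{\beta_0+i\,p}{\beta_0+i\,(p-1)},
\]
which is strictly increasing in $i$ with $\gamma_i\nearrow s\,p/(p-1)$; note also $\vartheta_i=\gamma_i-1/\beta_i<\gamma_i$. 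The admissibility condition of Proposition~\ref{prop:improve2}, i.e. $\gamma_i<1$ (which also gives $\vartheta_i<1$ and $\beta_i\ge2$), then holds for \emph{every} $i$ when $s<(p-1)/p$, and for $0\le i\le i_\ast$ with $i_\ast$ the largest integer below $\beta_0(1-s)/(s\,p-(p-1))$ when $s>(p-1)/p$ (and for every $i$ when $s\,p=p-1$). Iterating up to a suitable finite index $K$ (with $K$ large when $s<(p-1)/p$, and $K=i_\ast$ when $s\ge(p-1)/p$) one obtains a space--time bound $\int_{-3/4}^{0}\sup_{0<|h|<h_0}\|\delta_h^2u/|h|^{\gamma_K}\|_{L^{\beta_K}(B_{\rho})}^{\beta_K}\,dt\le C$ on some radius $\rho>1/2$; applying Proposition~\ref{prop:improve2} once more with $T_1$ ranging over $[-1/2,0]$ and keeping this time the time-slice term upgrades this to
\[
\sup_{t\in[-1/2,0]}\sup_{0<|h|<h_0}\Big\|\frac{\delta_h u(\cdot,t)}{|h|^{\vartheta_\star}}\Big\|_{L^{q_\star}(B_{5/8})}\le C,\qquad \vartheta_\star=\frac{\gamma_K\,\beta_K}{\beta_K+1},\quad q_\star=\beta_K+1.
\]

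To finish, I would multiply $u(\cdot,t)$ by a fixed cut-off $\chi\in C_0^\infty(B_{9/16})$ with $\chi\equiv1$ on $B_{1/2}$; the discrete Leibniz rule together with the previous bound give $\sup_{t\in[-1/2,0]}[u(\cdot,t)\,\chi]_{\mathcal{N}^{\vartheta_\star,q_\star}_\infty(\mathbb{R}^N)}\le C$, and then the Morrey-type embedding \cite[Theorem~2.8]{bralinschi} yields $\sup_{t\in[-1/2,0]}[u(\cdot,t)]_{C^\delta(B_{1/2})}\le C(N,s,p,\delta)$, provided the parameters are chosen so that $\vartheta_\star\,q_\star>N$ and $\delta<\vartheta_\star-N/q_\star$. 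Scaling back and using a covering argument as in Remark~\ref{oss:flexibility} then gives \eqref{apriori}. The main obstacle is the exponent bookkeeping: one must check that the constraint $(1+\vartheta_i\beta_i)/\beta_i<1$ survives every step, that $\gamma_i$ increases precisely to $\Theta(s,p)$, and --- crucially in the regime $s\ge(p-1)/p$, where only finitely many steps are admissible --- that enlarging the starting integrability $\beta_0$ drives the attainable exponent arbitrarily close to $1$ (one computes $\gamma_{i_\ast}\to1$ as $\beta_0\to\infty$, while $\beta_K\to\infty$, so that $\vartheta_\star-N/q_\star\to\gamma_K$). Once this is arranged for the given $\delta$, the remaining steps are routine adaptations of the proof of Theorem~\ref{teo:1}.
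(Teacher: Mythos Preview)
Your approach is correct, and in the case $s\,p\le p-1$ it is essentially identical to the paper's (the paper starts from $\beta_0=p$ rather than a general $\beta_0$, but the iteration and the limit $\gamma_i\nearrow s\,p/(p-1)$ are the same). In the regime $s\,p>p-1$, however, you take a genuinely different route. The paper fixes $\beta_0=p$, iterates Proposition~\ref{prop:improve2} until the exponent $\gamma_i$ first reaches $1$ (at some index $i_\infty$ depending only on $s,p$), and then performs a \emph{second} iteration of Proposition~\ref{prop:improve2} in which the differentiability exponent is frozen at $\gamma=1-\varepsilon$ while only the integrability $\beta$ is pushed to infinity; this second stage is what produces the arbitrarily high integrability needed for the Morrey embedding. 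You instead absorb the need for high integrability into the \emph{starting point}: by first running the integrability-gain iteration of Proposition~\ref{prop:improve} (as in the proof of Theorem~\ref{teo:1}) up to a large $\beta_0$, you ensure that the single iteration of Proposition~\ref{prop:improve2} runs for roughly $i_\ast\sim \beta_0(1-s)/(s\,p-(p-1))$ admissible steps, and your computation $\gamma_{i_\ast}\to 1$, $\beta_{i_\ast}\to\infty$ as $\beta_0\to\infty$ is correct (indeed $1+\vartheta_i\beta_i=s(\beta_0+i\,p)$, so $(\gamma_{i_\ast}\beta_{i_\ast}-N)/(\beta_{i_\ast}+1)\to 1$). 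Both strategies cost two finite iterations overall; the paper's version keeps the two stages conceptually separate (first reach differentiability $\approx 1$, then raise integrability), while yours trades the explicit second stage for the freedom of a large initial $\beta_0$ coming from Theorem~\ref{teo:1}. The endgame (cut-off, $\mathcal{N}^{\vartheta_\star,q_\star}_\infty$ bound, Morrey embedding, rescaling) is the same in both.
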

\begin{proof}By the same scaling argument as in the proof of Theorem~\ref{teo:1}, it is enough to prove that
$$
\sup_{t\in [-1/2,0]}[u(\cdot,t)]_{C^\delta(B_{1/2})}\leq C(N,p,s,\delta),
$$
under the assumption that $u$ is a local weak solution of 
\[
u_t+(-\Delta_p)^s u=0,\qquad \mbox{ in } B_2\times (-2,0],
\]
which satisfies \eqref{assumption}. Define for $i\in\mathbb{N}$, the sequences of exponents
$$
\beta_i=p+i\,(p-1),
$$
and
$$
\vartheta_0=s-\frac{1}{p},\qquad \vartheta_{i+1}=\frac{\vartheta_i\,\beta_i+s\,p}{\beta_{i+1}}=\vartheta_i\,\frac{p+i\,(p-1)}{p+(i+1)(p-1)}+\frac{s\,p}{p+(i+1)(p-1)}.
$$
By induction, we see that $\{\vartheta_i\}_{i\in\mathbb{N}}$ is explicitely given by the increasing sequence
$$
 \vartheta_i =\left(s-\frac{1}{p}\right)\,\frac{p}{p+i\,(p-1)}+\frac{s\,p\,i}{p+i\,(p-1)},\qquad i\in\mathbb{N},
$$
and thus
$$
\lim_{i\to\infty} \vartheta_i = \frac{s\,p}{p-1}.
$$
The proof is now split into two different cases.
\vskip.2cm\noindent
{\bf  Case 1: $s\,p\leq (p-1)$.} 
Fix $0<\delta<s\,p/(p-1)$ and choose $i_\infty\in\mathbb{N}\setminus\{0\}$ such that
\[
\delta<\frac{1+\vartheta_{i_\infty}\,\beta_{i_\infty}}{\beta_{i_\infty}+1}-\frac{N}{\beta_{i_\infty}+1}.
\]
This is feasible, since 
\[
\lim_{i\to\infty} \beta_i=+\infty,\qquad \lim_{i\to\infty}\vartheta_i=\frac{s\,p}{p-1}\qquad \mbox{ and } \qquad \delta< \frac{s\,p}{p-1}.
\]
Define also 
$$
h_0=\frac{1}{64\,i_\infty},\qquad R_i=\frac{7}{8}-4\,(2\,i+1)\,h_0=\frac{7}{8}-\frac{2\,i+1}{16\,i_\infty},\qquad \mbox{ for } i=0,\dots,i_\infty.
$$
We note that 
\[
R_0+4\,h_0=\frac{7}{8}\qquad \mbox{ and }\qquad R_{i_\infty-1}-4\,h_0=\frac{3}{4}.
\] 
By applying\footnote{Note that in this case we will always have $1+\vartheta_i\beta_i<\beta_i$, so that the proposition applies.} Proposition \ref{prop:improve2} (ignoring the second term of the left-hand side of \eqref{iteralo2}) with
\[
T_1=0,\qquad T_0=-R_i-4\,h_0,\qquad \mu=8\,h_0,
\]
and
\[
R=R_i, \qquad \vartheta=\vartheta_i \qquad \mbox{ and }\qquad \beta=\beta_i,\quad \mbox{ for } i=0,\ldots,i_\infty-1,
\] 
and observing that $R_i-4\,h_0=R_{i+1}+4\,h_0$ and that by construction
$$
\frac{1+s\,p+\vartheta_i\,\beta_i}{\beta_i+(p-1)}=\frac{1+\vartheta_{i+1}\,\beta_{i+1}}{\beta_{i+1}},
$$
we obtain the iterative scheme of inequalities:
\begin{itemize}
\item for $i=0$
\[
\int_{-(R_1+4h_0)}^0\sup\limits_{0<|h|< h_0}\left\|\dfrac{\delta^2_h u}{|h|^{\frac{1+\vartheta_1\beta_1}{\beta_1}}}\right\|_{L^{\beta_1}(B_{R_1+4h_0})}^{\beta_1}dt \leq C\,\displaystyle\int_{-\frac{7}{8}}^0\sup\limits_{0<|h|< h_0}\left(\left\|\dfrac{\delta^2_h u }{|h|^s}\right\|_{L^{p}(B_{7/8})}^p+1\right)dt; 
\]
\item for $i=1,\ldots,i_\infty-2$
\[
\begin{split}
\int_{-(R_{i+1}+4h_0)}^0&\sup\limits_{0<|h|< h_0}\left\|\dfrac{\delta^2_h u}{|h|^{\frac{1+\vartheta_{i+1}\beta_{i+1}}{\beta_{i+1}}}}\right\|_{L^{\beta_{i+1}}(B_{R_{i+1}+4h_0})}^{\beta_{i+1}}dt\\
 &\le C\,\displaystyle\int_{-(R_{i}+4h_0)}^0\sup\limits_{0<|h|< h_0}\left(\left\|\dfrac{\delta^2_h u }{|h|^{\frac{1+\vartheta_{i}\beta_{i}}{\beta_{i}}}}\right\|_{L^{\beta_i}(B_{R_i+4\,h_0})}^{\beta_i}+1\right)dt ;
\end{split}
\]
\item finally, for $i=i_\infty-1$
\[
\begin{split}
\displaystyle\int_{-\frac{3}{4}}^0&\sup_{0<|h|< {h_0}}\left\|\frac{\delta^2_h u}{|h|^{\frac{1}{\beta_{i_\infty}}+\vartheta_{i_\infty}}}\right\|_{L^{\beta_{i_\infty}}(B_{3/4})}^{\beta_{i_\infty}}dt\\
& \leq \displaystyle C\int_{-(R_{i_\infty-1}+4h_0)}^0\sup_{0<|h|< h_0}\left(\left\|\frac{\delta^2_h u }{|h|^{\frac{1+\vartheta_{i_\infty-1}\beta_{i_\infty-1}}{\beta_{i_\infty-1}}}}\right\|_{L^{\beta_{i_\infty-1}}(B_{R_{i_\infty-1}+4\,h_0})}^{\beta_{i_\infty-1}}+1\right)dt.
\end{split}
\]
\end{itemize}
Here $C=C(N,p,s,\delta)>0$ as always. As in \eqref{eq:1sttofrac} we have
$$
\displaystyle\int_{-\frac{7}{8}}^0 \sup_{0<|h|< h_0}\left\|\frac{\delta^2_h u }{|h|^s}\right\|_{L^{p}(B_{7/8})}^pdt\leq C(N,\delta,s,p).\nonumber 
$$
Hence, the previous iterative scheme of inequalities implies
\[
\int_{-\frac{3}{4}}^0\sup_{0<|h|< {h_0}}\left\|\frac{\delta^2_h u}{|h|^{\frac{1}{\beta_{i_\infty}}+\vartheta_{i_\infty}}}\right\|_{L^{\beta_{i_\infty}}(B_{3/4})}^{\beta_{i_\infty}} dt\leq C(N,\delta,p,s).
\]
Now we apply Proposition \ref{prop:improve2} once more, this time with 
\[
T_0+\mu=-3/4,\qquad -\frac{1}{2}\le T_1\le 0,
\]
\[
\beta=\beta_{i_\infty},\qquad \theta=\theta_{i_\infty},\qquad R+4\,h_0=\frac{6}{8}\qquad\mbox{ and }\qquad R-4\,h_0=6/8-8\,h_0>5/8.
\] 
We obtain (now ignoring the first term in the left-hand side of \eqref{iteralo2})
\[
\sup_{0<|h|< h_0}\left\|\frac{\delta_h u(\cdot,T_1)}{|h|^{\frac{1+\vartheta_{i_\infty}\beta_{i_\infty}}{\beta_{i_\infty}+1}}}\right\|_{L^{\beta_{i_\infty}+1}(B_{5/8})}^{\beta_{i_\infty}+1}\le \int_{-\frac{3}{4}}^0\sup_{0<|h|< {h_0}}\left\|\frac{\delta^2_h u}{|h|^{\frac{1}{\beta_{i_\infty}}+\vartheta_{i_\infty}}}\right\|_{L^{\beta_{i_\infty}}(B_{3/4})}^{\beta_{i_\infty}} dt\leq C(N,\delta,p,s).
\]
Since this is valid for every $-1/2\le T_1\le 0$, we obtain
$$
\sup_{t\in [-1/2,0]}\sup_{0<|h|< h_0}\left\|\frac{\delta_h u}{|h|^{\frac{1+\vartheta_{i_\infty}\beta_{i_\infty}}{\beta_{i_\infty}+1}}}\right\|_{L^{\beta_{i_\infty}+1}(B_{5/8})}^{\beta_{i_\infty}+1}\leq C(N,\delta,p,s).
$$
From here, we may repeat the arguments at the end of the proof of Theorem~\ref{teo:1} (see \eqref{Neq}) and use the Morrey--type embedding of \cite[Theorem 2.8]{bralinschi}, with 
\[
\beta = \frac{1+\vartheta_{i_\infty}\,\beta_{i_\infty}}{\beta_{i_\infty}+1},\qquad q=\beta_{i_\infty}+1\qquad \mbox{ and }\qquad \alpha =\delta,
\]
to obtain 
$$
\sup_{t\in [-1/2,0]}[u(\cdot,t)]_{C^\delta(B_{1/2})}\leq C(N,\delta,p,s),
$$
which concludes the proof in this case.
\vskip.2cm\noindent
{\bf Case 2: $s\,p> (p-1)$.} Fix $0<\delta<1$. Let $i_\infty\in\mathbb{N}\setminus\{0\}$ be such that 
$$
\frac{1+\vartheta_{i_\infty-1}\,\beta_{i_\infty-1}}{\beta_{i_\infty-1}}< 1\qquad \mbox{ and }\qquad \frac{1+\vartheta_{i_\infty}\,\beta_{i_\infty}}{\beta_{i_\infty}}\ge 1.
$$
Observe that such a choice is feasible, since 
\[
\lim_{i\to\infty} \frac{1+\vartheta_i\,\beta_i}{\beta_i}=\frac{s\,p}{p-1}>1.
\] 
Now choose $j_\infty$ so that 
$$
\delta<\frac{\beta_{i_\infty+j_\infty}}{\beta_{i_\infty+j_\infty}+1}-\frac{N}{\beta_{i_\infty+j_\infty}+1},
$$
and let 
\[
\gamma=1-\varepsilon,\qquad \mbox{ for some } 0<\varepsilon<1 \mbox{ such that } \delta<(1-\varepsilon)\,\frac{\beta_{i_\infty+j_\infty}}{\beta_{i_\infty+j_\infty}+1}-\frac{N}{\beta_{i_\infty+j_\infty}+1}.
\] 
Define also 
$$
h_0=\frac{1}{64\,(i_\infty+j_\infty)},\qquad R_i=\frac{7}{8}-4\,(2\,i+1)\,h_0=\frac{7}{8}-\frac{2\,i+1}{16\,(i_\infty+j_\infty)},\qquad \mbox{ for } i=0,\dots,i_\infty+j_\infty.
$$
We note that 
\[
R_0+4\,h_0=\frac{7}{8}\qquad \mbox{ and }\qquad R_{(i_\infty+j_\infty)-1}-4\,h_0=\frac{3}{4}.
\] 
By applying\footnote{Note that for $i\leq i_\infty-1$ we have $1+\vartheta_i\,\beta_i<\beta_i$, so that the proposition applies.}  Proposition \ref{prop:improve2} with
\[
T_1=0,\qquad T_0=-R_i-4\,h_0,\qquad \mu=8\,h_0,
\]
and
\[
R=R_i, \qquad \vartheta=\vartheta_i \qquad \mbox{ and }\qquad \beta=\beta_i,\quad \mbox{ for } i=0,\ldots,i_\infty-1,
\] 
and observing that $R_i-4\,h_0=R_{i+1}+4\,h_0$ and that
$$
\frac{1+s\,p+\vartheta_i\,\beta_i}{\beta_i+(p-1)}=\frac{1+\vartheta_{i+1}\,\beta_{i+1}}{\beta_{i+1}},
$$
we arrive as in {\bf Case 1} at
\[
\begin{split}
\int_{-(R_{i_\infty}+4h_0)}^0&\sup_{0<|h|< {h_0}}\left\|\frac{\delta^2_h u}{|h|^{\gamma}}\right\|_{L^{\beta_{i_\infty}}(B_{R_{i_\infty}+4h_0})}^{\beta_{i_\infty}} dt\\
&\leq\int_{-(R_{i_\infty}+4h_0)}^0 \sup_{0<|h|< {h_0}}\left\|\frac{\delta^2_h u}{|h|^{\frac{1}{\beta_{i_\infty}}+\vartheta_{i_\infty}}}\right\|_{L^{\beta_{i_\infty}}(B_{R_{i_\infty}+4h_0})}^{\beta_{i_\infty}}dt\leq C(N,\delta,p,s),
\end{split}
\]
since $\gamma<1\le 1/\beta_{i_\infty}+\vartheta_{i_\infty}$. We now apply Proposition \ref{prop:improve2} with
\[
R=R_i, \qquad \beta=\beta_i\qquad \mbox{ and }\qquad \vartheta=\widetilde \vartheta_i=\gamma-\frac{1}{\beta_i} \qquad  \mbox{ for } i=i_\infty,\ldots,i_\infty+j_\infty-1.
\] 
Observe that by construction we have
$$
\frac{1+\widetilde \vartheta_i\,\beta_i}{\beta_i}=\gamma,\qquad \mbox{ for } i=i_\infty,\ldots,i_\infty+j_\infty-1,
$$
and using that $s\,p>(p-1)$
$$
\frac{1+s\,p+\widetilde \vartheta_i\, \beta_i}{\beta_i+p-1}>\frac{p+\widetilde \vartheta_i\, \beta_i}{\beta_i+p-1}=1+\frac{\beta_i\,(\gamma-1)}{\beta_i+p-1}>\gamma,\quad \mbox{ for } i=i_\infty,\ldots,i_\infty+j_\infty-1.
$$
This gives the following inequalities:
\begin{itemize}
\item for $i=i_\infty,\ldots,i_\infty+j_\infty-2$
$$
\int_{-(R_{i+1}+4h_0)}^0\sup\limits_{|h|\leq h_0}\left\|\dfrac{\delta^2_h u}{|h|^{\gamma}}\right\|_{L^{\beta_{i+1}}(B_{R_{i+1}+4h_0})}^{\beta_{i+1}}dt\leq  C\,\int_{-(R_{i}+4h_0)}^0\sup\limits_{0<|h|< h_0}\left(\left\|\dfrac{\delta^2_h u }{|h|^{\gamma}}\right\|_{L^{\beta_i}(B_{R_i+4h_0})}^{\beta_i}+1\right) dt,
$$
\item for $i=j_\infty-1$
\[
\begin{split}
\int_{-3/4}^0&\sup_{0<|h|< {h_0}}\left\|\frac{\delta^2_h u}{|h|^{\gamma}}\right\|_{L^{\beta_{i_\infty+j_\infty}}(B_{3/4})}^{\beta_{i_\infty+j_\infty}}dt\\
&  \leq C\int_{-(R_{i_\infty+j_\infty}+4h_0)}^0\sup_{0<|h|< h_0}\left(\left\|\frac{\delta^2_h u }{|h|^{\gamma}}\right\|_{L^{\beta_{i_\infty+j_\infty-1}}(B_{R_{i_\infty+j_\infty-1}+4h_0})}^{\beta_{i_\infty+j_\infty-1}}+1\right)dt.
\end{split}
\]
\end{itemize}
Hence, recalling that $\gamma=1-\varepsilon$, we conclude
$$
\int_{-3/4}^0\sup_{0<|h|< {h_0}}\left\|\frac{\delta^2_h u}{|h|^{1-\varepsilon}}\right\|_{L^{\beta_{i_\infty+j_\infty}}(B_{3/4})}^{\beta_{i_\infty+j_\infty}}dt\leq C(N,\delta,p,s).
$$
Now we apply Proposition \ref{prop:improve} again with $\beta=\beta_{i_\infty+j_\infty}$, $\theta=\gamma-1/\beta_{i_\infty+j_\infty}$, $R+4h_0=6/8$ and $R-4h_0=6/8-8\,h_0>5/8$. We obtain (ignoring again the first term in the left-hand side)
$$
\sup_{t\in [-5/8,0]}\sup_{0<|h|< h_0}\left\|\frac{\delta_h u}{|h|^{(1-\varepsilon)\,\frac{\beta_{i_\infty+j_\infty}}{\beta_{i_\infty+j_\infty}+1}}}\right\|_{L^{\beta_{i_\infty+j_\infty}+1}(B_{5/8})}^{\beta_{i_\infty+j_\infty}+1}\leq C(N,\delta,p,s).
$$
Once we land here, as before we can repeat the arguments at the end of the proof of Theorem~\ref{teo:1} and use the Morrey-type embedding,  this time with 
\[
\beta = (1-\varepsilon)\,\frac{\beta_{i_\infty+j_\infty}}{\beta_{i_\infty+j_\infty}+1},\qquad q=\beta_{i_\infty+j_\infty}+1\qquad \mbox{ and }\qquad \alpha =\delta.
\] 
This gives 
$$
\sup_{t\in [-1/2,0]}[u(\cdot,t)]_{C^\delta(B_{1/2})}\leq C(N,\delta,p,s),
$$
and the proof is concluded.
\end{proof}

\section{Regularity in time}\label{sec:time}
In this section, we prove H\"older regularity in time using the previously obtained regularity in space. This approach uses energy estimates to control the growth of local integrals which yields a Campanato--type estimate.
We will use the notation
\[
\overline{u}_{x_0,R} = \fint_{B_R(x_0)}u\,dx. 
\]
When the center $x_0$ is clear from the context, we often simply write $\overline{u}_R$. 
For $u\in L^1(Q_{R,r}(x_0,t_0))$, we set 
\[
\overline{u}_{(x_0,t_0),R,r} = \fint_{Q_{R,r}(x_0,t_0)}\,u\,dx\,dt.
\]
Again, when the center $(x_0,t_0)$ is clear from the context, we simply write $\overline{u}_{R,r}$. 
\par
The following simple Poincar\'e--type inequality will be useful.
\begin{lm}
\label{lm:poincare}
Let $1\le p<\infty$ and let $B_r = B_r(x_0)$. Suppose that $u\in W^{s,p}(B_r)$, then for any nonnegative $\eta\in C_0^\infty(B_r)$ such that $\overline{\eta}_r=1$, there holds 
\begin{equation}
\label{poincare_mean}
\int_{B_r}|u-\overline{(u\,\eta)}_r|^p\,dx\le  \left(\frac{2^{N+s\,p}}{\omega_N}\,\|\eta\|^p_{L^\infty(B_r)}\right)\, r^{s\,p}\,\iint_{B_r\times B_r}\frac{|u(x)-u(y)|^p}{|x-y|^{N+s\,q}}\,dx\,dy, 
\end{equation}
\end{lm}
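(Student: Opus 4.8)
The plan is to reduce the whole estimate to two applications of Jensen's inequality, using the normalization $\overline{\eta}_r=1$ to express the deviation $u(x)-\overline{(u\,\eta)}_r$ as a weighted average of the differences $u(x)-u(y)$. First I would observe that, since $\fint_{B_r}\eta\,dy=\overline{\eta}_r=1$, for every $x\in B_r$ one has $u(x)=\fint_{B_r}u(x)\,\eta(y)\,dy$, so that
\[
u(x)-\overline{(u\,\eta)}_r=\fint_{B_r}\big(u(x)-u(y)\big)\,\eta(y)\,dy .
\]

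Next I would take absolute values, raise to the power $p$, and apply Jensen's inequality twice. Applying it first to the probability measure $dy/|B_r|$ gives $|u(x)-\overline{(u\,\eta)}_r|^p\le (\fint_{B_r}|u(x)-u(y)|\,\eta(y)\,dy)^p$, and bounding $\eta(y)$ by $\|\eta\|_{L^\infty(B_r)}$ pulls out a factor $\|\eta\|_{L^\infty(B_r)}^p$; applying Jensen a second time to the average $\fint_{B_r}|u(x)-u(y)|\,dy$ moves the exponent inside the integral. Recalling $|B_r|=\omega_N r^N$, this yields
\[
|u(x)-\overline{(u\,\eta)}_r|^p\le \|\eta\|_{L^\infty(B_r)}^p\,\fint_{B_r}|u(x)-u(y)|^p\,dy=\frac{\|\eta\|_{L^\infty(B_r)}^p}{\omega_N\,r^N}\int_{B_r}|u(x)-u(y)|^p\,dy .
\]

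Finally I would integrate in $x$ over $B_r$ and introduce the fractional kernel via the crude bound $|x-y|\le 2r$ for $x,y\in B_r$, i.e. $1\le (2r)^{N+s\,p}\,|x-y|^{-(N+s\,p)}$. Combining this with the previous display and simplifying $r^{N+s\,p}/r^N=r^{s\,p}$ gives exactly \eqref{poincare_mean}. There is no genuine obstacle here: the only points worth a remark are that the identity in the first display is legitimate precisely because $\overline{\eta}_r=1$, and that, since $\max_{B_r}\eta\ge\overline{\eta}_r=1$, keeping the $p$-th power of $\|\eta\|_{L^\infty(B_r)}$ is harmless (the argument in fact produces only its first power).
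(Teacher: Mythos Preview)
Your proof is correct and follows essentially the same route as the paper: write $u(x)-\overline{(u\,\eta)}_r$ as $\fint_{B_r}(u(x)-u(y))\,\eta(y)\,dy$ using $\overline{\eta}_r=1$, apply Jensen's inequality, and then insert the kernel via $|x-y|\le 2r$. One small remark: your final parenthetical is slightly at odds with your described steps (pulling $\eta$ out of the $p$-th power before Jensen does produce $\|\eta\|_{L^\infty}^p$, not its first power; to get only the first power you would instead apply Jensen directly with respect to the probability measure $\eta(y)\,dy/|B_r|$), but this does not affect the validity of the argument.
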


\begin{proof}
By using the fact that $\int_{B_r}\eta\,dx=|B_r|$ and Jensen's inequality, we obtain
\[
\begin{split}
\int_{B_r}|u-\overline{(u\,\eta)}_r|^p\,dx &= \int_{B_r}\left|\frac{1}{|B_r|}\,\int_{B_r}(u(x)-u(y))\,\eta(y)\,dy\right|^p\,dx\\
&\le \frac{\|\eta\|_{L^\infty(B_r)}^p}{|B_r|}\,\iint_{B_r\times B_r}|u(x)-u(y)|^p\,dx\,dy\\
&\le \frac{\|\eta\|_{L^\infty(B_r)}^p}{|B_r|}\,(2\,r)^{N+s\,p}\iint_{B_r\times B_r}\frac{|u(x)-u(y)|^p}{|x-y|^{N+s\,p}}\,dx\,dy.
\end{split}
\]
This concludes the proof.
\end{proof}

\begin{prop}\label{prop_time}
Suppose that $u$ is a local weak solution of 
\[
\partial_tu+(-\Delta_p)^su=0,\qquad\mbox{ in } B_2\times (-2,0], 
\]
such that 
\[
\|u\|_{L^{\infty}(\mathbb{R}^N\times[-1,0])}\le 1,
\]
and
\begin{equation}
\label{holderseminorm}
\sup_{t\in [-1/2,0]}[u(\cdot,t)]_{C^\delta(B_{1/2})}\le K_\delta,\qquad \mbox{ for any } s<\delta<\Theta(s,p),
\end{equation}
where $\Theta(s,p)$ is the exponent defined in \eqref{exponents}.
Then there is a constant $C= C(N,s,p,K_\delta,\delta)>0$ such that 
\[
|u(x,t)-u(x,\tau)|\le C\,|t-\tau|^{\gamma},\qquad \mbox{ for every } (x,t),(x,\tau)\in Q_{\frac{1}{4},\frac{1}{4}},
\]
where 
\[
\gamma = \frac{1}{\dfrac{s\,p}{\delta}-(p-2)}. 
\]
In particular, $u\in C^\gamma_{t}(Q_{\frac{1}{4},\frac{1}{4}})$ for any $\gamma<\Gamma(s,p)$, where $\Gamma(s,p)$ is the exponent defined in \eqref{exponents}.
\end{prop}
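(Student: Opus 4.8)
The plan is to use the equation together with the established spatial Hölder bound \eqref{holderseminorm}, in the spirit of the classical argument for parabolic problems (see \cite[page 118]{CD}). First note that \eqref{holderseminorm}, combined with the continuity $u\in C((-2,0];L^2_{\mathrm{loc}})$, forces $u$ to have a representative that is jointly continuous on $B_{1/2}\times[-1/2,0]$ (equicontinuity in $x$ plus $L^2_{\mathrm{loc}}$‑continuity in $t$), so pointwise values make sense. Fix $x_0\in B_{1/4}$ and $\tau<t$ in $(-1/4,0]$. Since $\|u\|_{L^\infty(\mathbb{R}^N\times[-1,0])}\le1$, the claimed estimate is trivial when $|t-\tau|$ exceeds a fixed constant, so we may assume $|t-\tau|\le\sigma_0$ for a small $\sigma_0=\sigma_0(N,s,p,\delta)$ and set
\[
r:=|t-\tau|^{\frac{1}{s\,p-\delta\,(p-2)}}.
\]
One checks that $s\,p-\delta\,(p-2)>0$ for every admissible $\delta$ with $s<\delta<\Theta(s,p)$, so $r\le 1/8$ if $\sigma_0$ is small; in particular $B_{2\,r}(x_0)\subset B_{1/2}$ and $[\tau,t]\subset[-1/2,0]$. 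Pick $\eta\in C_0^\infty(B_r(x_0))$, obtained by rescaling a fixed bump, with $\overline{\eta}_{r}=1$, $\|\eta\|_{L^\infty}\le C(N)$ and $|\nabla\eta|\le C(N)/r$.

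Next I would split
\[
u(x_0,t)-u(x_0,\tau)=\mathrm{I}+\mathrm{II}+\mathrm{III},
\]
where $\mathrm{I}=u(x_0,t)-\overline{(u(\cdot,t)\,\eta)}_r$, $\mathrm{III}=\overline{(u(\cdot,\tau)\,\eta)}_r-u(x_0,\tau)$ and $\mathrm{II}=\overline{(u(\cdot,t)\,\eta)}_r-\overline{(u(\cdot,\tau)\,\eta)}_r$. Since $\fint_{B_r(x_0)}\eta\,dx=1$, for $\sigma\in\{t,\tau\}$ we have $u(x_0,\sigma)-\overline{(u(\cdot,\sigma)\,\eta)}_r=\fint_{B_r(x_0)}(u(x_0,\sigma)-u(y,\sigma))\,\eta(y)\,dy$, which by \eqref{holderseminorm} is bounded by $C\,K_\delta\,r^\delta$ (alternatively one may invoke Lemma \ref{lm:poincare}); hence $|\mathrm{I}|+|\mathrm{III}|\le C\,K_\delta\,r^\delta$.

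For $\mathrm{II}$ I would go back to the weak formulation. Testing \eqref{locweakeq} with $\phi(x,\sigma)=\eta(x)\,\psi(\sigma)$, $\psi\in C_0^1$, shows that $\sigma\mapsto\int u(\cdot,\sigma)\,\eta\,dx$ has distributional derivative equal to $-\iint\frac{J_p(u(x,\sigma)-u(y,\sigma))(\eta(x)-\eta(y))}{|x-y|^{N+s\,p}}\,dx\,dy$; since this map is continuous in $\sigma$ and, as we shall see, the double integral is bounded on $[\tau,t]$, the fundamental theorem of calculus gives
\[
\int_{B_r(x_0)}\big(u(x,t)-u(x,\tau)\big)\,\eta(x)\,dx=-\int_\tau^t\iint_{\mathbb{R}^N\times\mathbb{R}^N}\frac{J_p\big(u(x,\sigma)-u(y,\sigma)\big)\,\big(\eta(x)-\eta(y)\big)}{|x-y|^{N+s\,p}}\,dx\,dy\,d\sigma.
\]
The integrand is symmetric in $(x,y)$ and $\eta$ is supported in $B_r(x_0)$, so the inner double integral is controlled by $C\int_{B_r(x_0)}\int_{\mathbb{R}^N}\frac{|u(x,\sigma)-u(y,\sigma)|^{p-1}\,|\eta(x)-\eta(y)|}{|x-y|^{N+s\,p}}\,dy\,dx$. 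On $\{y\in B_{1/2}\}$ I bound $|u(x,\sigma)-u(y,\sigma)|\le K_\delta|x-y|^\delta$ by \eqref{holderseminorm} and $|\eta(x)-\eta(y)|\le\min\{2\|\eta\|_{L^\infty},\,C\,r^{-1}|x-y|\}$, splitting the $y$‑integral at $|x-y|=r$; using that $\delta\,(p-1)+1-s\,p>0$ (true since $\delta>s$ and $s<1$ imply $\delta>(s\,p-1)/(p-1)$) and $\delta\,(p-1)-s\,p<0$ (true since $\delta<\Theta(s,p)\le s\,p/(p-1)$), this part is $\le C\,r^{N+\delta(p-1)-s\,p}$. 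On $\{y\notin B_{1/2}\}$ the crude bounds $|u(x,\sigma)-u(y,\sigma)|\le2$ and $|x-y|\ge c>0$ give at most $C\,r^N\le C\,r^{N+\delta(p-1)-s\,p}$. Dividing by $|B_r(x_0)|\simeq r^N$ yields $|\mathrm{II}|\le C\,|t-\tau|\,r^{\delta(p-1)-s\,p}$.

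Collecting the three bounds,
\[
|u(x_0,t)-u(x_0,\tau)|\le C\,r^\delta+C\,|t-\tau|\,r^{\delta(p-1)-s\,p},
\]
and the choice $r=|t-\tau|^{1/(s\,p-\delta(p-2))}$ is precisely the one equalizing the two terms, each being $C\,|t-\tau|^{\delta/(s\,p-\delta(p-2))}=C\,|t-\tau|^{\gamma}$ with $\gamma=1/(s\,p/\delta-(p-2))$. This gives the pointwise estimate for $|t-\tau|\le\sigma_0$, hence for all $(x,t),(x,\tau)\in Q_{\frac14,\frac14}$ after enlarging $C$ by means of $\|u\|_{L^\infty}\le1$, i.e. $u\in C^\gamma_t(Q_{\frac14,\frac14})$. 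Since $\gamma$ is increasing in $\delta$ and tends to $\Gamma(s,p)$ as $\delta\nearrow\Theta(s,p)$, any $\gamma<\Gamma(s,p)$ is attained. The steps demanding care are the rigorous derivation of the integrated identity from \eqref{locweakeq} (disposing of the boundary terms through the choice $\psi\in C_0^1$) and the bookkeeping in splitting the nonlocal integral — in particular checking that all relevant exponents of $|x-y|$ lie in the integrable range for every admissible $\delta$, which I expect to be the main technical obstacle and the reason for treating $s\,p\lessgtr p-1$ with some attention.
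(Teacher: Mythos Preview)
Your argument is correct and, in fact, slightly more elementary than the paper's. Both proofs hinge on the same core computation: testing the weak formulation against a time-independent cut-off $\eta$ to bound the time drift of spatial averages, and combining this with the spatial H\"older bound \eqref{holderseminorm} to obtain
\[
C\,r^{\delta}+C\,|t-\tau|\,r^{\delta(p-1)-s\,p},
\]
which is then balanced by the choice $r=|t-\tau|^{1/(s\,p-\delta(p-2))}$. The difference lies in how this estimate is used. The paper bounds the \emph{mean oscillation} of $u$ over parabolic cylinders and then invokes the Campanato-type characterization of H\"older spaces with respect to a general metric (the results of Da~Prato \cite{DaP} and G\'orka \cite{Go}); this forces the case distinction $s\,p\gtrless p-1$, since one must check that $s\,p-\delta(p-2)$ lies on the correct side of $1$ for the associated ``metric'' $|x-y|+|t-\tau|^{1/(s\,p-\delta(p-2))}$ to be subadditive. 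Your direct pointwise splitting $u(x_0,t)-u(x_0,\tau)=\mathrm{I}+\mathrm{II}+\mathrm{III}$ bypasses the Campanato machinery entirely, and since you only need $s\,p-\delta(p-2)>0$, $\delta(p-1)-s\,p<0$ and $\delta(p-1)+1-s\,p>0$---all of which you verify uniformly for $s<\delta<\Theta(s,p)$---no case distinction is actually required. Your final caveat about ``treating $s\,p\lessgtr p-1$ with some attention'' is therefore moot in your own approach. Note also that the ``fundamental theorem of calculus'' step you flag as delicate is immediate from \eqref{locweakeq} with the time-independent test function $\phi=\eta$ and $J=[\tau,t]$, exactly as in the paper's derivation of \eqref{I3bb}.
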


\begin{proof}
We take $(x_0,t_0)\in Q_{1/4,1/4}$ and choose 
\[
0<r<\frac{1}{8},\qquad 0<\theta<\frac{1}{8}.
\]
Consider the parabolic cylinder
\[
Q_{r,\theta}(x_0,t_0)=B_r(x_0)\times(t_0-\theta,t_0].
\]
Observe that by construction we have 
\[
Q_{r,\theta}(x_0,t_0)\subset B_\frac{3}{8}\times \left(-\frac{1}{2},0\right].
\]
Let $\eta\in C_0^\infty(B_{r/2}(x_0))$ be a non-negative cut-off function, such that
\[
\eta\equiv \|\eta\|_{L^\infty(B_{r/2}(x_0))} \mbox{ on } B_{r/4}(x_0),\qquad \overline{\eta}_{r}=1\qquad \mbox{ and }\qquad \|\nabla \eta\|_{L^\infty(B_{r/2}(x_0))}\le \frac{C}{r},
\] 
for some constant $C=C(\|\eta\|_{L^\infty(B_{r/2}(x_0))},N)>0$.
Observe that, thanks to the condition on its average, we have 
\[
\|\eta\|_{L^\infty(B_{r/2}(x_0))}=\frac{1}{|B_{r/4}(x_0)|}\,\int_{B_{r/4}(x_0)} \eta\,dx\le \frac{|B_{r}(x_0)|}{|B_{r/4}(x_0)|}\,\overline{\eta}_r=4^N.
\]
Thus the constant appearing in \eqref{poincare_mean} will only depend on $N,s$ and $p$.
\par
We now write 
\[
u(x,t)-\overline{u}_{r,\theta} =\Big(u(x,t)- \overline{(u\,\eta)}_{r}(t)\Big)+\Big(\overline{(u\,\eta)}_{r,\theta}-\overline{u}_{r,
\theta}\Big) + \Big(\overline{(u\,\eta)}_r(t)   -\overline{(u\,\eta)}_{r,\theta}\Big), 
\]
where we have set 
\[
\overline{(u\,\eta)}_{r}(t) = \fint_{B_r(x_0)}u(y,t)\,\eta(y)\,dy.
\]
Then 
\[
\begin{split}
\fint_{Q_{r,\theta}(x_0,t_0)}|u(x,t)-\overline{u}_{r,\theta}|\,dx\,dt&\le\fint_{Q_{r,\theta}(x_0,t_0)}\left|u(x,t)-\overline{(u\,\eta)}_r(t)\right|\,dx\,dt\\
&+\fint_{Q_{r,\theta}(x_0,t_0)}\left|\overline{u}_{r,\theta}-\overline{(u\,\eta)}_{r,\theta}\right|\,dx\,dt \\
&+ \fint_{Q_{r,\theta}(x_0,t_0)}\left|\overline{(u\,\eta)}_{r,\theta}-\overline{(u\,\eta)}_r(t)\right|\,dx\,dt\\
& =: A_1+A_2+A_3.
\end{split}
\]
We first note that 
\begin{equation}
\label{I2}
\begin{split}
A_2 = \left|\overline{u}_{r,\theta}-\overline{(u\eta)}_{r,\theta}\right| &= \left|\fint_{Q_{r,\theta}(x_0,t_0)}\left(u(x,t)-\overline{(u\eta)}_{r,\theta}\right)\,dx\,dt\right|\\
&\le \fint_{Q_{r,\theta}(x_0,t_0)}\left|u(x,t)-\overline{(u\eta)}_r(t)\right|\,dx\,dt\\
& +\fint_{Q_{r,\theta}(x_0,t_0)}\left|\overline{(u\eta)}_{r,\theta}-\overline{(u\eta)}_r(t)\right|\,dx\,dt\\
& = A_1+A_3. 
\end{split}
\end{equation}
Thus it suffices to estimate $A_1$ and $A_3$. 
In view of Lemma \ref{lm:poincare}, we have 
\[
\begin{split}
A_1 & \le \left(\fint_{Q_{r,\theta}(x_0,t_0)}\left|u(x,t)-\overline{(u\,\eta)}_r(t)\right|^p\,dx\,dt\right)^{\frac{1}{p}}\\
& \le C\,\left(\frac{r^{s\,p}}{|Q_{r,\theta}(x_0,t_0)|}\int_{t_0-\theta}^{t_0}\iint_{B_r(x_0)\times B_r(x_0)}\frac{|u(x,t)-u(y,t)|^p}{|x-y|^{N+s\,p}}\,dx\,dy\,dt\right)^{\frac{1}{p}},
\end{split}
\]
for some $C=C(N,s,p)>0$.
Recalling that $\delta>s$ and using the spatial H\"older continuity of $u$, we find that 
\begin{equation}
\label{I1}
\begin{split}
A_1\le C\,K_\delta\, r^{\delta}, \qquad  \mbox{ for some } C=C(N,s,p)>0.
\end{split}
\end{equation}
Indeed, by observing that for every $x\in B_r(x_0)$ we have $B_r(x_0)\subset B_{2\,r}(x)\subset B_{1/2}$, we get
\[
\begin{split}
\iint_{B_r(x_0)\times B_r(x_0)}\frac{|u(x,t)-u(y,t)|^p}{|x-y|^{N+s\,p}}\,dx\,dy&\le K_\delta^p\,\int_{B_r(x_0)}\left(\int_{B_{2\,r}(x)}\,|x-y|^{(\delta-s)\,p-N}\,dy\right)\,dx\\
&=\frac{K_\delta^p\,|B_r(x_0)|}{(\delta-s)\,p}\,N\,\omega_N\,(2\,r)^{(\delta-s)\,p},
\end{split}
\] 
where we used spherical coordinates to compute the last integral.
Observe that the width $\theta$ of the time does not come into play here.
\par 
We now turn to $A_3$ and first note that 
\begin{equation}
\label{I3b}
A_3\le \sup_{T_0,T_1\in (t_0-\theta,t_0]}\left|\overline{(u\,\eta)}_r(T_0) - \overline{(u\,\eta)}_r(T_1)\right|. 
\end{equation}
If $T_0,T_1 \in (t_0-\theta,t_0]$ with $T_0<T_1$, we use the weak formulation \eqref{locweakeq} with $\phi(x,t)=\eta(x)$, to obtain
\begin{equation}
\label{I3bb}
\begin{split}
|B_r(x_0)|\,\Big|\overline{(u\,\eta)}_r(T_0) - \overline{(u\,\eta)}_r(T_1)\Big| &= \left|\int_{B_r(x_0)}u(x,T_0)\,\eta(x)\, dx - \int_{B_r(x_0)}u(x,T_1)\,\eta(x)\, dx\right|\\
&=\left|\int_{T_0}^{T_1}\iint_{\mathbb{R}^N\times\mathbb{R}^N}J_p(u(x,\tau)-u(y,\tau))\,(\eta(x)-\eta(y))\,d\mu(x,y)\,d\tau\right|\\
& \le \left|\int_{T_0}^{T_1}\iint_{B_r(x_0)\times B_r(x_0)}J_p(u(x,\tau)-u(y,\tau))\,(\eta(x)-\eta(y))\,d\mu(x,y)\,d\tau\right|\\
&+ 2\,\left|\int_{T_0}^{T_1}\iint_{(\mathbb{R}^N\setminus B_r(x_0))\times B_{r/2}(x_0)}J_p(u(x,\tau)-u(y,\tau))\,\eta(x)\,d\mu(x,y)\,d\tau\right|\\
& = J_1 + J_2. 
\end{split}
\end{equation}
In order to control $J_2$, we claim that for $t\in [-1/2,0]$, $x\in B_r(x_0)$ and $y\in \mathbb{R}^N$, 
\begin{equation}
\label{holderadesso}
|u(x,t)-u(y,t)|\le C\,|x-y|^\delta,\qquad \mbox{ for some }  C=C(K_\delta,\delta)>0. 
\end{equation}
Indeed, if $y\in B_{1/2}$ this follows directly from the assumption. On the other hand, if $y\in \mathbb{R}^N\setminus B_{1/2}$, then by construction
\[
|x-y|^\delta\ge 8^{-\delta}\ge 8^{-\delta}\,\|u\|_{L^\infty(\mathbb{R}^N\times[-1,0])}\ge \frac{8^{-\delta}}{2}\,|u(x,t)-u(y,t)|.
\]
Additionally, if $y\in \mathbb{R}^N\setminus B_r(x_0)$ and $x\in B_{r/2}(x_0)$, we have 
\[
|x-y|\ge |y-x_0|-|x-x_0|\ge |y-x_0|-\frac{r}{2}\ge \frac{1}{2}\,|y-x_0|. 
\]
Thus, by using this and \eqref{holderadesso}, we get
\[
\begin{split}
J_2&\le 2\,(T_1-T_0)\,\|\eta\|_{L^\infty(B_{r/2}(x_0))} 
\iint_{(\mathbb{R}^N\setminus B_r(x_0))\times B_{r/2}(x_0)}\frac{|u(x,t)-u(y,t)|^{p-1}}{|x-y|^{N+s\,p}}\,dy\,dx\\
&\le C\,\theta\,\iint_{(\mathbb{R}^N\setminus B_r(x_0))\times B_{r/2}(x_0)}|x-y|^{(p-1)\,\delta-N-s\,p}\,dy\,dx\\
&\le C\,\theta\,r^{N}\,\int_{\mathbb{R}^N\setminus B_r(x_0)}|x_0-y|^{(p-1)\,\delta-N-s\,p}\,dy\\
&\le C\,\theta\,r^{N+\delta\,(p-1)-s\,p},
\end{split}
\]
for some $C = C(\delta,N,s,p,K_\delta)>0$. Observe that we used that $\delta\,(p-1)-s\,p<0$, in order to assure that the integral on $\mathbb{R}^N\setminus B_r(x_0)$ converges.
\par
As for $J_1$, we have for $\delta>s$
\[
\begin{split}
J_1 & \le [\eta]_{W^{s,p}(B_r(x_0))}\,\int_{t_0-\theta}^{t_0}\left(\iint_{B_r(x_0)\times B_r(x_0)}|u(x,t)-u(y,t)|^p\,d\mu(x,y)\right)^{\frac{p-1}{p}}dt \\
& \le C\,K_\delta^{p-1}\,r^{\frac{N}{p}-s}\,\int_{t_0-\theta}^{t_0}\left(\iint_{B_r(x_0)\times B_r(x_0)}|x-y|^{\delta\, p}\,d\mu(x,y)\right)^{\frac{p-1}{p}}dt \\
&\le C\,K_\delta^{p-1}\,\theta\,\left(r^{N+(\delta-s)\,p}\right)^{\frac{p-1}{p}}\,r^{\frac{N}{p}-s}\\
& = CK_\delta^{p-1}\,\theta\, r^{N-s\,p+\delta\, (p-1)},
\end{split}
\]
for some $C=C(N,s,p,\delta)>0$. By recalling \eqref{I3b} and using the estimates on $J_1$ and $J_2$ in \eqref{I3bb}, we have thus shown that 
\[
A_3 \le C\,K_\delta^{p-1}\,\theta\,r^{\delta\,(p-1)-s\,p}, \qquad \mbox{ for some  }C=C(\delta,N,s,p)>0.
\]
Hence, by also using \eqref{I1} and \eqref{I2}, we get 
\begin{equation}
\label{AAA}
A_1+A_2+A_3 \le C\,K_\delta\, r^\delta + C\,K_\delta^{p-1}\,\theta\,r^{\delta\,(p-1)-s\,p}.
\end{equation}
 We now have to distinguish two cases:
\vskip.2cm\noindent
$\bullet$ {\bf Case $s\,p\ge (p-1)$.} We now choose $\theta$ as follows
\[
\theta=\frac{1}{8}\, r^{s\,p-\delta\,(p-2)}.
\]
Observe that since $s\,p\ge (p-1)$, then $\Theta(s,p)=1$ and we always have\footnote{Indeed, observe that 
\[
s\,p\ge (p-1)=(p-2)+1>\delta\,(p-2)+1,
\]
thanks to the fact that $0<\delta<1$. This in turn implies
\[
s\,p-\delta(p-2)>1,
\]
as claimed.}
\begin{equation}
\label{culo}
s\,p-\delta\,(p-2)> 1.
\end{equation}
We thus obtain from \eqref{AAA}
\[ 
\fint_{Q_{r,\theta}(x_0,t_0)}|u-\overline{u}_{r,\theta}|\,dx\,dt\le C\,r^\delta, \qquad \mbox{ for some } C=C(\delta,K_\delta, N,s,p)>0.
\]
By the characterization of Campanato spaces on $\mathbb{R}^{N+1}$ with respect to a general metric (see \cite[Teorema 3.I]{DaP} and also \cite[Theorem 3.2]{Go}), this implies that $u$ is $\delta-$H\"older continuous in $Q_{1/4,1/4}$ with respect to the metric
$$
\widetilde d((x,\tau_1),(y,\tau_2))=|x-y|+|\tau_1-\tau_2|^\frac{1}{s\,p-\delta\,(p-2)}.
$$
By keeping \eqref{culo} into account, we can infer that $\widetilde d$ is a true metric.
Thus, in particular, we have the estimate
$$
\sup_{x\in \overline{B_{1/4}}}|u(x,\tau_1)-u(x,\tau_2)|\leq C\,|\tau_1-\tau_2|^\gamma, \qquad \mbox{ for } 
\gamma = \frac{1}{\dfrac{s\,p}{\delta}-(p-2)},
$$
where $C=C(K_\delta, N,s,p)>0$. Observe that the continuous function 
\[
\delta\mapsto \frac{1}{\dfrac{s\,p}{\delta}-(p-2)},\qquad \mbox{ for } 0<\delta<1,
\]
is increasing and that
\[
\lim_{\delta\nearrow 1} \frac{1}{\dfrac{s\,p}{\delta}-(p-2)}=\frac{1}{s\,p-(p-2)}.
\]
Thus for every $0<\gamma<1/(s\,p-(p-2))$, there exists $s<\delta<1$ such that
\[
\gamma =\frac{1}{\dfrac{s\,p}{\delta}-(p-2)}.
\]
The proof is over in this case.
\vskip.2cm\noindent
$\bullet$ {\bf Case $s\,p< (p-1)$.} In this case, we revert the hierarchy between time and space and
choose $r$ as follows
\[
(8\,r)^{s\,p-(p-2)\,\delta}=\theta,\qquad \mbox{ i.\,e. }\quad r=\frac{1}{8}\,\theta^\frac{1}{s\,p-(p-2)\,\delta}.
\]
Observe that the exponent on $\theta$ is positive: indeed, for $p=2$ this is straightforward, while for $p>2$ we use that
\[
\delta\,(p-2)<\frac{s\,p}{p-1}\,(p-2)<s\,p.
\]
We further notice that now
\begin{equation}
\label{culo2}
s\,p-(p-2)\,\delta\le 1,
\end{equation}
up to choose $\delta$ sufficiently close\footnote{More precisely, it is sufficient to take
\[
\delta=\frac{s\,p}{p-1}-\varepsilon,
\]
with $0<\varepsilon<s/(p-1)$ such that
\[
\varepsilon\,(p-2)\le 1-\frac{s\,p}{p-1}.
\]
Such a choice is feasible, since now $s\,p<(p-1)$.} to $s\,p/(p-1)$. This time, we obtain from \eqref{AAA}
\[ 
\fint_{Q_{r,\theta}(x_0,t_0)}|u-\overline{u}_{r,\theta}|\,dx\,dt\le C\,\theta^\frac{\delta}{s\,p-(p-2)\,\delta}, \qquad \mbox{ for some } C=C(\delta,K_\delta, N,s,p)>0.
\]
Again by the Campanato--type theorem of \cite[Teorema 3.I]{DaP}, this shows that $u$ is $(\delta/(s\,p-(p-2)\,\delta))-$H\"older continuous in $Q_{1/4,1/4}$ with respect to the metric 
\[
\widetilde{d}((x,\tau_1),(y,\tau_2))=|x-y|^{s\,p-(p-2)\,\delta}+|\tau_1-\tau_2|.
\]
Observe that this is indeed a metric, thanks to \eqref{culo2}. In particular, we have the estimate
$$
\sup_{x\in \overline{B_{1/4}}}|u(x,\tau_1)-u(x,\tau_2)|\leq C\,|\tau_1-\tau_2|^\gamma, \qquad \mbox{ for } \
\gamma = \frac{1}{\dfrac{s\,p}{\delta}-(p-2)},
$$
where $C=C(\delta,K_\delta, N,s,p)>0$. We now use that the continuous function
\[
\delta\mapsto \frac{1}{\dfrac{s\,p}{\delta}-(p-2)},\qquad 0<\delta<\frac{s\,p}{p-1},
\]
is increasing and that
\[
\lim_{\delta\nearrow \frac{s\,p}{p-1}} \frac{1}{\dfrac{s\,p}{\delta}-(p-2)}=1.
\]
Thus, for every $\gamma<1$, there exists $s<\delta<s\,p/(p-1)$ such that
\[
\gamma=\frac{1}{\dfrac{s\,p}{\delta}-(p-2)}.
\] 
This concludes the proof in this case, as well.
\end{proof}

\section{Proof of the main theorem}\label{sec:main}
Before proving our main result, we will need the following lemma, which allows us to control the parabolic Sobolev-Slobodecki\u{\i} seminorm of a local weak solution $u$ in terms of its $L^\infty$ norm. 

\begin{lm}\label{lm:seminormestimate}
Let $p\ge 2$ and $0<s<1$. Let $u$ be a local weak solution of
\[
\partial_t u+(-\Delta_p)^su=0,\quad \mbox{ in } B_2\times (-2\,R^{s\,p},0],
\]
such that $u\in L^\infty(\mathbb{R}^N\times[-R^{s\,p},0])$.
Then 
\[
\left(R^{-N}\,\int_{-\frac{7}{8}\,R^{s\,p}}^{0}[u]_{W^{s,p}(B_{(3\,R)/4}(x_0))}^p\,dt\right)^{\frac{1}{p}} \le C\,\Big(\|u\|_{L^\infty(\mathbb{R}^N\times[-R^{s\,p},0]})+1\Big),
\]
for some $C=C(N,s,p)>0$.
\end{lm}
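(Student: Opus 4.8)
The plan is to establish a Caccioppoli-type energy estimate. By the scaling invariance of the equation (Remark~\ref{oss:scalings}) and by translation, it suffices to treat the case $R=1$, $x_0=0$: thus I may assume $u$ is a local weak solution on $B_2\times(-2,0]$ with $M_0:=\|u\|_{L^\infty(\mathbb{R}^N\times[-1,0])}<\infty$, and I must show $\int_{-7/8}^0[u(\cdot,t)]^p_{W^{s,p}(B_{3/4})}\,dt\le C(N,s,p)\,(M_0+1)^p$. I would fix a spatial cut-off $\eta\in C_0^\infty(B_{13/16})$ with $0\le\eta\le 1$, $\eta\equiv 1$ on $B_{3/4}$ and $|\nabla\eta|\le C$, together with a time cut-off $\tau\in C^\infty(\mathbb{R})$ with $0\le\tau\le 1$, $\tau\equiv 0$ on $(-\infty,-1]$, $\tau\equiv 1$ on $[-7/8,+\infty)$ and $|\tau'|\le C$, and insert $\phi=u\,\eta^p\,\tau$ into the weak formulation \eqref{locweakeq} on $J=[-1,0]$. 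Since $\phi$ is not $C^1$ in time, this step has to be justified by a time-regularization argument in the spirit of the proof of Lemma~\ref{Mlemreg} (or by Steklov averaging); this is routine and I would only indicate it.

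Integrating the parabolic term by parts in $t$ and using $f=0$, $\tau(-1)=0$ and $\tau(0)=1$, one arrives, writing $\mathcal{D}(t)$ for the double integral $\iint_{\mathbb{R}^N\times\mathbb{R}^N}\frac{J_p(u(x,t)-u(y,t))\,\big(u(x,t)\eta(x)^p-u(y,t)\eta(y)^p\big)}{|x-y|^{N+s\,p}}\,dx\,dy$, at the identity
\[
\frac{1}{2}\int_{B_2}u(x,0)^2\,\eta(x)^p\,dx+\int_{-1}^0\mathcal{D}(t)\,\tau(t)\,dt=\frac{1}{2}\int_{-1}^0\int_{B_2}u(x,t)^2\,\eta(x)^p\,\tau'(t)\,dx\,dt.
\]
The right-hand side is at most $C\,M_0^2\le C(M_0+1)^p$ (using $p\ge 2$, $|\tau'|\le C$, $\mathrm{supp}\,\tau'\subset[-1,-7/8]\subset[-1,0]$ and $\mathrm{supp}\,\eta\subset B_{13/16}$), and the first term on the left is nonnegative and may be discarded, so that $\int_{-1}^0\mathcal{D}(t)\,\tau(t)\,dt\le C(M_0+1)^p$.

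To bound $\mathcal{D}(t)$ from below I would split the domain of integration according to whether the variable not forced to lie in $\mathrm{supp}\,\eta$ belongs to $B_{7/8}$: this gives $\mathcal{D}(t)=\mathcal{L}(t)+2\,\mathcal{N}(t)$, where $\mathcal{L}(t)$ is the integral over $B_{7/8}\times B_{7/8}$ and $\mathcal{N}(t)$ is the integral over $\mathrm{supp}\,\eta\times(\mathbb{R}^N\setminus B_{7/8})$, on which $\eta(y)=0$. Since $\mathrm{dist}(\mathrm{supp}\,\eta,\mathbb{R}^N\setminus B_{7/8})\ge 1/16$ and $|u|\le M_0$ on $\mathbb{R}^N\times[-1,0]$, one gets $|\mathcal{N}(t)|\le (2M_0)^{p-1}M_0\,|B_{13/16}|\int_{|z|\ge 1/16}|z|^{-N-s\,p}\,dz\le C(N,s,p)\,(M_0+1)^p$. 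For $\mathcal{L}(t)$ I would invoke the elementary algebraic inequality
\[
J_p(a-b)\,(a\,c^p-b\,d^p)\ \ge\ \frac{1}{4}\,|a-b|^p\,\max\{c,d\}^p\ -\ C(p)\,(|a|+|b|)^p\,|c-d|^p,\qquad a,b\in\mathbb{R},\ c,d\ge 0,
\]
which follows from $a\,c^p-b\,d^p=\frac{1}{2}(a-b)(c^p+d^p)+\frac{1}{2}(a+b)(c^p-d^p)$ together with $|c^p-d^p|\le p\max\{c,d\}^{p-1}|c-d|$ and Young's inequality (see also \cite{DKP}). Applying it with $a=u(x,t)$, $b=u(y,t)$, $c=\eta(x)$, $d=\eta(y)$, bounding the main term below by $\frac{1}{4}[u(\cdot,t)]^p_{W^{s,p}(B_{3/4})}$ (since $\eta\equiv 1$ on $B_{3/4}$), and controlling the error term through $|\eta(x)-\eta(y)|\le\min\{1,C|x-y|\}$, $|u|\le M_0$, and $\iint_{B_{7/8}\times B_{7/8}}\min\{1,C|x-y|\}^p\,|x-y|^{-N-s\,p}\,dx\,dy\le C(N,s,p)$ (the last integral converging because $p(1-s)>0$), I obtain $\mathcal{L}(t)\ge \frac{1}{4}[u(\cdot,t)]^p_{W^{s,p}(B_{3/4})}-C(N,s,p)(M_0+1)^p$.

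Putting these together, $\frac{1}{4}\int_{-1}^0[u(\cdot,t)]^p_{W^{s,p}(B_{3/4})}\,\tau(t)\,dt\le \int_{-1}^0\mathcal{D}(t)\,\tau(t)\,dt+C(M_0+1)^p\int_{-1}^0\tau(t)\,dt\le C(M_0+1)^p$; since $\tau\ge 0$ and $\tau\equiv 1$ on $[-7/8,0]$, this yields $\int_{-7/8}^0[u(\cdot,t)]^p_{W^{s,p}(B_{3/4})}\,dt\le C(M_0+1)^p$, and the general case follows by undoing the scaling. I expect the only genuine obstacle to be the rigorous insertion of the time-rough test function $u\,\eta^p\,\tau$, which is handled by the regularization machinery already developed for Lemma~\ref{Mlemreg}; the nonlocal remainder $\mathcal{N}$ causes no trouble precisely because $u$ is globally bounded, which is the role of the standing $L^\infty$ assumption.
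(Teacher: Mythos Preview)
Your proof is correct and follows the natural Caccioppoli route. The paper's proof is in the same spirit but proceeds more indirectly: it shifts the solution to $\widetilde u = u + k$ with $k=\|u\|_{L^\infty}+1$ (so that $\widetilde u\ge 1$), then invokes a ready-made parabolic energy inequality from \cite[Lemma~2.2]{str} for $\widetilde u\,\eta\,\psi$, and finally reads off the seminorm bound from that estimate. Your argument instead inserts $u\,\eta^p\,\tau$ directly, splits the nonlocal form into the local piece $\mathcal{L}$ and the off-diagonal piece $\mathcal{N}$, and extracts the seminorm via the pointwise inequality $J_p(a-b)(a\,c^p-b\,d^p)\ge \tfrac14|a-b|^p\max\{c,d\}^p-C(|a|+|b|)^p|c-d|^p$. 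The advantage of your approach is that it is self-contained and does not require the positivity shift; the paper's version is shorter on the page because the technical work is delegated to the reference. Both rely on the global $L^\infty$ hypothesis to kill the tail term (your $\mathcal{N}$, the paper's integral over $\mathbb{R}^N\setminus B_R$), and both require the same time-regularization caveat you flagged.
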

 \begin{proof}
Without loss of generality, we may suppose that $x_0=0$.
Let us set
\[
k = \|u\|_{L^\infty(\mathbb{R}^N\times[-R^{s\,p},0])} + 1\qquad \mbox{�and }\qquad \widetilde u = u + k.
\]
Then $\widetilde u$ is a local weak solution in $B_2\times (-2\,R^{s\,p},0]$ and $\widetilde u\ge 1$ in $\mathbb{R}^N\times[-R^{s\,p},0]$. For all $\phi(x,t) = \eta(x)\, \psi(t)$ with $\psi \in C^{\infty}$ such that $\psi(t)=0$ for $t\leq -R^{s\,p}$ and $\psi(0)=1$, and $\eta \in C_0^{\infty}(B_{R})$, we get from a slight modification of \cite[Lemma 2.2]{str}
\[
\begin{split}
		\int_{-R^{s\,p}}^{0} \Big[\widetilde u(\cdot,t)\,\phi(\cdot,t)\Big]^p_{W^{s,p}(B_R)}\, dt&\leq C \int_{-R^{s\,p}}^{0} \iint_{B_{R}\times B_{R}} \max\Big\{\widetilde u(x,t),\, \widetilde u(y,t)\Big\}^p\,|\phi(x,t)-\phi(y,t)|^p\, d\mu\, dt\\
&+C\left (\sup_{x\in\mathrm{supp\,}\eta}\int_{\mathbb{R}^N\setminus B_{R}}\frac{dy}{|x-y|^{N+s\,p}}\biggr)\biggl (\int_{-R^{s\,p}}^{0}\int_{B_{R}}\widetilde u(x,t)^p\,\phi(x,t)^p\, dx dt\right)\\
&+C\int_{-R^{s\,p}}^{0}\left(\sup_{x\in\mathrm{supp\,}\eta}\int_{\mathbb{R}^N\setminus B_{R}}\frac{(u(y,t)_+)^{p-1}}{|x-y|^{N+s\,p}}\,dy\,\int_{B_{R}}\widetilde u(x,t)\,\phi(x,t)^p\,dx\right)dt\\
		&+ \frac{1}{2}\int_{-R^{s\,p}}^{0} \int_{B_{R}}\widetilde u(x,t)^{2} \left ( \frac{\partial \phi^p}{\partial t} \right )_{+}\, dx\, dt+\int_{B_{R}} \widetilde u (x,0)\, dx.
\end{split}
\] 
We choose $\eta$ such that 
\[
\eta \equiv 1\ \text{ in }\ B_{\frac{3}{4}\, R},\qquad |\nabla \eta|\le \frac{C}{R}\qquad \mbox{ and }\qquad \textrm{dist}(\mathrm{supp\,}\eta,\mathbb{R}^N\setminus B_{R})\ge c_0\,R,
\]
and $\psi$ such that 
\[
\psi \equiv 1\ \text{ in }\ \left[-\frac{7}{8}\,R^{s\,p},0\right]\qquad \mbox{ and }\qquad |\psi'|\le \frac{C}{R^{s\,p}}.
\] 
It is then a routine matter to show that 
\[
\int_{-\frac{7}{8}\,R^{s\,p}}^{0}\big[\widetilde u\big]_{W^{s,p}(B_{3R/4})}^p\,dt \le C\,R^N\,(k^p+k^2+k)\le C\,R^N\,k^p, 
\]
where $C= C(N,s,p)>0$ and  we used that $p\ge 2$ and $k\ge 1$. This proves the lemma.
\end{proof}

We are now in the position to prove Theorem \ref{thm1}.
\begin{proof}[Proof of Theorem \ref{thm1}] The continuity in space is contained in Theorem \ref{teo:1higher}, thus we only need to prove the continuity in time. We take for simplicity $T_0=0$. If $u$ is a local weak solution in $B_2\times (-2\,R^{s\,p},0]$, 
we obtain from \eqref{apriori}
\[
\begin{split}
\sup_{t\in \left[-\frac{R^{s\,p}}{2},0\right]} [u(\cdot,t)]_{C^\delta(B_{R/2}(x_0))}&\leq \frac{C}{R^\delta}\,\left(\|u\|_{L^\infty\left(\mathbb{R}^N\times \left[-R^{s\,p},0\right]\right)} + 1\right)\\
&+\frac{C}{R^\delta}\,\left(R^{-N}\,\int_{-\frac{7}{8}\,R^{s\,p}}^0 [u]^p_{W^{s,p}(B_{3R/4}(x_0))}dt \right)^\frac{1}{p} .
\end{split}
\]
An application of Lemma \ref{lm:seminormestimate} gives 
\begin{equation}
\label{petula}
\sup_{t\in \left[-\frac{R^{s\,p}}{2},0\right]} [u(\cdot,t)]_{C^\delta(B_{R/2}(x_0))}\leq \frac{C}{R^\delta}\,\left(\|u\|_{L^\infty(\mathbb{R}^N\times [-R^{s\,p},0])}  + 1\right).
\end{equation}
We set 
\[
\mathcal{N}_R = \|u\|_{L^\infty(\mathbb{R}^N\times [-R^{s\,p},0])} + 1,
\]
then for $\alpha$ such that $\alpha\in [-R^{s\,p}(1-\mathcal{N}_R^{2-p}),0]$, we define the rescaled function 
\[
u_R(x,t) = \frac{1}{\mathcal{N}_R}u\left(R\,x,\frac{1}{\mathcal{N}_R^{p-2}}\,R^{s\,p}\,t+\alpha\right).
\]
This is a local weak solution in $B_2(x_0)\times(-2,0]$ satisfying the hypothesis of Proposition \ref{prop_time}. Indeed, by construction
\[
\|u_R\|_{L^\infty(\mathbb{R}^N\times[-1,0])}\le 1,
\]
and the estimate on the spatial H\"older seminorm \eqref{holderseminorm} of $u_R$ follows from \eqref{petula}.  
From Proposition \ref{prop_time} we obtain 
\[
\sup_{x\in B_{1/4}} [u_R(x,\cdot)]_{C^\gamma([-1/4,0])}\le C,
\]
for every $0<\gamma<\Gamma(s,p)$.
The claimed result follows by scaling back and varying $\alpha$ as in the proof of Theorem \ref{teo:1}.
\end{proof}

\appendix

\section{Existence for an initial boundary value problem}
\label{sec:A}

In order to give the definition of weak solution for an initial boundary value problem, we need to define a suitable functional space.  We assume that $\Omega\Subset\Omega'\subset \mathbb{R}^N$, where $\Omega'$ is a bounded open set in $\mathbb{R}^N$.
Given a function
\[
\psi\in W^{s,p}(\Omega')\cap L^{p-1}_{s\,p}(\mathbb{R}^N),
\]
we define as in \cite{KKP} (see also \cite[Proposition 2.12]{bralinschi}) the space 
\[
X_\psi^{s,p}(\Omega,\Omega') = \left\{v\in W^{s,p}(\Omega')\cap L^{p-1}_{s\,p}(\mathbb{R}^N)\,:\, v=\psi\text{ on }\mathbb{R}^N\setminus\Omega\right\}. 
\]
When $\psi\equiv 0$, the boundedness of $\Omega'$ entails that 
\[
X_0^{s,p}(\Omega,\Omega') = \{v\in W^{s,p}(\Omega')\cap L^{p-1}_{s\,p}(\mathbb{R}^N):v=0\text{ on }\mathbb{R}^N\setminus\Omega\}\subset W^{s,p}(\Omega').
\]
We endow the space $X_0^{s,p}(\Omega,\Omega')$ with the norm $W^{s,p}(\Omega')$, then this is a reflexive Banach space. Thanks to the previous inclusion, we also have that 
\[
(W^{s,p}(\Omega'))^*\subset (X^{s,p}_0(\Omega,\Omega'))^*. 
\]

\begin{defi}
\label{bdryweak}
Let $I=[t_0,t_1]$ and $p\ge 2$. With the notation above, assume that the functions $u_0,f$ and $g$ satisfy
\[
u_0\in L^2(\Omega), 
\]
\[
f\in L^{p'}(I;(W^{s,p}(\Omega'))^*),
\]
\[
g\in L^p(I;W^{s,p}(\Omega'))\cap L^{p-1}(I;L_{s\,p}^{p-1}(\mathbb{R}^N))\ \mbox{ and }\ \partial_t g\in L^{p'}(I;(W^{s,p}(\Omega'))^*). 
\]
We say that $u$ is a {\it weak solution of the initial boundary value problem} 
\begin{equation}
\label{Mweaksol}
 \left\{\begin{array}{rcll}
\partial_tu + (-\Delta_p)^su&=&f,&\mbox{ in }\Omega\times I,\\
u&=&g,& \mbox{ on }(\mathbb{R}^N\setminus\Omega)\times I,\\
u(\cdot,t_0) &=& u_0,&\mbox{ on }\Omega, 
\end{array}\right.
\end{equation}
if the following properties are verified:
\begin{itemize}
\item $u\in L^p(I;W^{s,p}(\Omega'))\cap L^{p-1}(I;L_{s\,p}^{p-1}(\mathbb{R}^N))\cap C(I;L^2(\Omega))$;
\vskip.2cm
\item $u\in X_{\mathbf{g}(t)}(\Omega,\Omega')$ for almost every $t\in I$, where $(\mathbf{g}(t))(x)=g(x,t)$;
\vskip.2cm
\item $\lim_{t\to t_0}\|u(\cdot,t) - u_0\|_{L^2(\Omega)}=0$;
\vskip.2cm
\item for every $J=[T_0,T_1]\subset I$ and every $\phi\in L^{p-1}(J;X_0^{s,p}(\Omega,\Omega'))\cap C^1(J;L^2(\Omega))$
\[
\begin{split}
-\int_J\int_\Omega u(x,t)\,\partial_t\phi(x,t)\,dx\,dt&+ \int_J\iint_{\mathbb{R}^N\times\mathbb{R}^N}\frac{J_p(u(x,t)-u(y,t))\,(\phi(x,t)-\phi(y,t))}{|x-y|^{N+s\,p}}\,dx\,dy\,dt  \\   
& = \int_\Omega u(x,T_0)\,\phi(x,T_0)\,dx -\int_\Omega u(x,T_1)\,\phi(x,T_1)\,dx \\
&+ \int_J\langle f(\cdot,t),\phi(\cdot,t)\rangle\,dt.
\end{split}
\]
\end{itemize}
\end{defi}
The starting point for proving the existence of weak solutions is an abstract theorem for parabolic equations in Banach spaces. Before stating the theorem, we will briefly explain its framework. Let $V$ be a separable reflexive Banach space and let $H$ be a Hilbert space that we identify with its dual, i.e.\ $H^* = H$. Suppose that $V$ is dense and continuously embedded in $H$. 
If $v\in V$ and $h\in H$, we identify $h$ as an element of $V^*$ through the relation\footnote{With these identifications, we have $V\subset H\subset V^*$. This is sometimes called in the literature {\it Gelfand triple}.}
\begin{equation}\label{HtoVstar}
\langle h,v\rangle = (h,v)_H.
\end{equation} 
Here $\langle\cdot,\cdot\rangle$ denotes the duality pairing between $V$ and $V^*$ and $(\cdot,\cdot)_H$ denotes the scalar product in $H$. 
Let $I$ be an interval and $1<p<\infty$.  By \cite[Proposition 1.2, Chapter]{Sh}, we have
\begin{equation}
\label{CIH1}
W_p(I) := \{v\in L^p(I;V):\, v'\in L^{p'}(I;V^*)\}\subset C(I;H),
\end{equation}
and
\[
\mbox{ for } v\in W_p(I),\qquad t\mapsto \|v(t)\|^2_H \mbox{ is absolutely continuous}\qquad
\frac{d}{dt}\|v(t)\|^2_H = 2\,\langle v'(t),v(t)\rangle.
\]
More generally, by \cite[Corollary 1.1, Chapter III]{Sh}, for every $u,v\in W_p(I)$ the scalar product $t\mapsto (u(t),v(t))_H$ is an absolutely continuous function and there holds
\[
\frac{d}{dt}(u(t),v(t))_H = \langle u'(t),v(t)\rangle + \langle u(t),v'(t)\rangle,\qquad \mbox{ for a.\,e. }t\in I. 
\]
We recall that an operator $\mathcal{A}:V\to V^*$ is said to be
\begin{itemize}
\item \emph{monotone} if for every $u,v\in V$,
\[
\langle\mathcal{A} (u)-\mathcal{A} (v),u-v\rangle\ge 0;
\] 
\item {\it hemicontinuous} if the real function $\lambda\mapsto \langle\mathcal{A}(u+\lambda\, v),v\rangle$ is continuous, for every $u,v\in V$.
\end{itemize}

\begin{teo}\label{thmShow}
Let $V$ be a separable, reflexive Banach space and let $\mathcal{V} = L^p(I;V)$, for $1<p<\infty$, where $I=[t_0,t_1]$. Suppose that $H$ is a Hilbert space such that $V$ is dense and continuously embedded in $H$ and that $H$ is embedded into $V^*$ according to the relation \eqref{HtoVstar}. Assume that the family of operators $\mathcal{A}(t,\cdot):V\to V^*$, $t\in I$ satisfies: 
\begin{enumerate}
\item[{\it (i)}] for every $v\in V$,  the function $\mathcal{A}(\cdot,v):I\to V^*$ is measurable;
\vskip.2cm
\item[{\it (ii)}] for almost every $t\in I$, the operator $\mathcal{A}(t,\cdot):V\to V^*$ is monotone, hemicontinuous and bounded by 
\[
\|\mathcal{A}(t,v)\|_{V^*}\le C\,\Big(\|v\|^{p-1}_{V}+k(t)\Big), \qquad \mbox{ for } v\in V\quad \mbox{ and }\quad k\in L^{p'}(I),
\]
\item[{\it (iii)}] 
there exist a real number $\beta>0$ and a function $\ell\in L^1(I)$ such that
\[ 
\langle \mathcal{A}(t,v),v\rangle + \ell(t)\ge \beta\,\|v\|^p_V, \qquad \mbox{ for a.\,e. }t\in I \mbox{ and }v\in V.
\]
\end{enumerate}
Then for each $f\in \mathcal{V}^*=L^{p'}(I;V^*)$ and $u_0\in H$, there exists a unique $u\in W_p(I)$ satisfying 
\[
u'(t) + \mathcal{A}(t,u(t)) = f(t), \quad\mbox{ in }\mathcal{V}^*,\qquad u(t_0)=u_0\text{ in }H.
\] 
This means that $u\in\mathcal{V}$, $u'\in \mathcal{V}^*$ and 
\[
\int_I \langle u'(t),\phi(t)\rangle\,dt + \int_I \langle \mathcal{A}(t,u(t)),\phi(t)\rangle\,dt = \int_I \langle f(t),\phi(t)\rangle\,dt,\qquad \mbox{ for all }\phi\in \mathcal{V}. 
\]
\end{teo}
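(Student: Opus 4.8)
The plan is to prove this by the classical Galerkin method combined with Minty's monotonicity trick, in the spirit of Lions and Showalter.

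\emph{Uniqueness.} Suppose $u_1,u_2\in W_p(I)$ both solve the problem. Then $w=u_1-u_2$ lies in $W_p(I)$, satisfies $w'+\mathcal{A}(\cdot,u_1)-\mathcal{A}(\cdot,u_2)=0$ in $\mathcal{V}^*$ and $w(t_0)=0$. Testing with $w(t)\in V$ and using that $t\mapsto\|w(t)\|_H^2$ is absolutely continuous with $\frac{d}{dt}\|w(t)\|_H^2=2\,\langle w'(t),w(t)\rangle$, we get for almost every $t\in I$
\[
\frac12\,\frac{d}{dt}\|w(t)\|_H^2=-\langle \mathcal{A}(t,u_1(t))-\mathcal{A}(t,u_2(t)),\,u_1(t)-u_2(t)\rangle\le 0,
\]
by monotonicity. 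Integrating from $t_0$ yields $\|w(t)\|_H^2\le 0$, hence $u_1=u_2$.

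\emph{Galerkin approximation and a priori estimates.} Since $V$ is separable, choose $\{w_j\}_{j\ge 1}\subset V$, linearly independent, whose finite linear span is dense in $V$. For each $n$, seek $u_n(t)=\sum_{j=1}^n c_j^n(t)\,w_j$ solving the Carath\'eodory ODE system
\[
(u_n'(t),w_k)_H+\langle \mathcal{A}(t,u_n(t)),w_k\rangle=\langle f(t),w_k\rangle,\qquad k=1,\dots,n,\qquad u_n(t_0)=u_{0,n},
\]
where $u_{0,n}$ is the $H$-orthogonal projection of $u_0$ onto $\mathrm{span}\{w_1,\dots,w_n\}$, so $u_{0,n}\to u_0$ in $H$. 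The Gram matrix $((w_j,w_k)_H)$ is invertible, so the system is solvable; measurability $(i)$, the growth bound $(ii)$ and the coercivity $(iii)$ yield a local solution and an a priori bound that prevents blow-up, hence $u_n$ exists on all of $I$. Multiplying the $k$-th equation by $c_k^n(t)$ and summing gives $\frac12\frac{d}{dt}\|u_n(t)\|_H^2+\langle\mathcal{A}(t,u_n(t)),u_n(t)\rangle=\langle f(t),u_n(t)\rangle$; using $(iii)$, Young's inequality and Gronwall, $\{u_n\}$ is bounded in $L^p(I;V)\cap L^\infty(I;H)$. Then $(ii)$ gives $\{\mathcal{A}(\cdot,u_n)\}$ bounded in $\mathcal{V}^*$, and comparison in the equation gives $\{u_n'\}$ bounded in $\mathcal{V}^*$. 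Passing to a subsequence, $u_n\rightharpoonup u$ in $\mathcal{V}$, $u_n'\rightharpoonup u'$ in $\mathcal{V}^*$ (so $u\in W_p(I)$), $u_n(t_1)\rightharpoonup u(t_1)$ in $H$, and $\mathcal{A}(\cdot,u_n)\rightharpoonup\chi$ in $\mathcal{V}^*$. Passing to the limit in the Galerkin equation tested against $\varphi(t)\,w_k$ with $\varphi\in C^\infty_c(I)$, and using density of the $w_k$, gives $u'+\chi=f$ in $\mathcal{V}^*$; a standard argument also yields $u(t_0)=u_0$ in $H$.

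\emph{Identification $\chi=\mathcal{A}(\cdot,u)$.} This is the main point. By the chain rule in $W_p(I)$,
\[
\tfrac12\|u(t_1)\|_H^2-\tfrac12\|u_0\|_H^2=\int_I\langle u'(t),u(t)\rangle\,dt=\int_I\langle f(t)-\chi(t),u(t)\rangle\,dt.
\]
From the Galerkin identity integrated over $I$, $\int_I\langle\mathcal{A}(t,u_n),u_n\rangle\,dt=\int_I\langle f,u_n\rangle\,dt-\tfrac12\|u_n(t_1)\|_H^2+\tfrac12\|u_{0,n}\|_H^2$; taking $\limsup$, using the weak convergence of the $f$-term, $u_{0,n}\to u_0$, and weak lower semicontinuity of the $H$-norm, we obtain $\limsup_n\int_I\langle\mathcal{A}(t,u_n),u_n\rangle\,dt\le\int_I\langle\chi,u\rangle\,dt$. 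By monotonicity, for any $v\in\mathcal{V}$, $0\le\int_I\langle\mathcal{A}(t,u_n)-\mathcal{A}(t,v),u_n-v\rangle\,dt$; letting $n\to\infty$ gives $0\le\int_I\langle\chi-\mathcal{A}(\cdot,v),u-v\rangle\,dt$. Choosing $v=u-\lambda z$ with $\lambda>0$ and $z\in\mathcal{V}$ arbitrary, dividing by $\lambda$, and using hemicontinuity and dominated convergence as $\lambda\downarrow 0$, we get $0\le\int_I\langle\chi-\mathcal{A}(\cdot,u),z\rangle\,dt$ for every $z$, hence $\chi=\mathcal{A}(\cdot,u)$. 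I expect the delicate part to be exactly this identification step: extracting the energy equality with the correct boundary terms, controlling $u_{0,n}\to u_0$ in $H$, and combining lower semicontinuity with Minty's lemma require care with the chain rule in $W_p(I)$.
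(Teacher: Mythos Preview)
Your proof is correct and follows the standard Lions--Showalter Galerkin/Minty argument. The paper does not actually prove this theorem: it simply cites \cite[Proposition 4.1, Chapter III]{Sh} and remarks that the presence of the extra $\ell\in L^1(I)$ in condition $(iii)$ requires only minor changes. What you have written is precisely that underlying argument, with the $\ell(t)$ term absorbed into the a priori estimate step.

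Two small technical remarks. First, in the energy estimate for $u_n$, Gronwall is not really needed: integrating $(iii)$ and applying Young's inequality to $\langle f,u_n\rangle$ already gives the bounds in $L^p(I;V)\cap L^\infty(I;H)$ directly, since no $\|u_n\|_H^2$ term appears on the right. Second, the claim that ``comparison in the equation gives $\{u_n'\}$ bounded in $\mathcal{V}^*$'' requires a word of care in the Galerkin setting (one typically uses a basis orthonormal in $H$, or a projection argument, or bypasses this by passing to the limit in the integrated weak form first and reading off $u'\in\mathcal{V}^*$ from the limit equation). None of this affects the validity of your outline.
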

\begin{proof}
The existence of a unique solution $u\in\mathcal{V}$ is contained in \cite[Proposition 4.1, Chapter III]{Sh}. The condition $(iii)$ is slightly different here, due to the presence of the function $\ell(t)$, but the proof of \cite[Proposition 4.1, Chapter III]{Sh} goes through with minor changes. 
\end{proof}
In order to prove existence for our problem \eqref{bdryweak}, we will use Theorem \ref{thmShow} with the choice $V=X_0^{s,p}(\Omega,\Omega')$. This is the content of the next result, which generalizes \cite[Theorem 2.5]{MRT}. The latter only deals with the case $f\equiv g\equiv 0$.
\begin{teo}\label{thm:exunsol}
Let $p\ge 2$, let $I = [t_0,t_1]$ and suppose that $g$ satisfies
\begin{align*}
&g\in L^p(I;W^{s,p}(\Omega'))\cap L^{p}(I;L^{p-1}_{s\,p}(\mathbb{R}^N)),\quad\partial_t g\in L^{p'}(I;(X_0^{s,p}(\Omega,\Omega'))^*),\\
&\lim_{t\to t_0}\|g(\cdot,t)-g_0\|_{L^2(\Omega)},\qquad \mbox{ for some }g_0\in L^2(\Omega). 
\end{align*}
Suppose also that 
\[
f\in L^{p'}(I;(X_0^{s,p}(\Omega,\Omega'))^*).
\]
Then for any initial datum $u_0\in L^2(\Omega)$, there exists a unique weak solution $u$ to problem \eqref{Mweaksol}. 
\end{teo}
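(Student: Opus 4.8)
The plan is to reduce to the homogeneous boundary value problem and then invoke the abstract existence result, Theorem~\ref{thmShow}, with the Gelfand triple $V=X_0^{s,p}(\Omega,\Omega')\subset H=L^2(\Omega)\subset V^*$. (Here $V$ is dense and continuously embedded in $H$: the embedding follows since $X_0^{s,p}(\Omega,\Omega')\subset W^{s,p}(\Omega')\hookrightarrow L^p(\Omega')\hookrightarrow L^2(\Omega')$ because $\Omega'$ is bounded and $p\ge 2$, and density holds because $C_0^\infty(\Omega)\subset X_0^{s,p}(\Omega,\Omega')$ is dense in $L^2(\Omega)$.) First I would set $v=u-g$; by the structure of Definition~\ref{bdryweak}, $u$ is a weak solution of \eqref{Mweaksol} if and only if $v(t)\in X_0^{s,p}(\Omega,\Omega')$ for a.e.\ $t$, $v(t_0)=u_0-g_0$ in $L^2(\Omega)$, and $v$ solves $v'+\mathcal{A}(t,v)=f-\partial_t g$ in $(X_0^{s,p}(\Omega,\Omega'))^*$, where for $w,\phi\in X_0^{s,p}(\Omega,\Omega')$ I define
\[
\langle \mathcal{A}(t,w),\phi\rangle=\iint_{\mathbb{R}^N\times\mathbb{R}^N}\frac{J_p\big((w+g)(x,t)-(w+g)(y,t)\big)\,(\phi(x)-\phi(y))}{|x-y|^{N+s\,p}}\,dx\,dy.
\]
Since $f-\partial_t g\in L^{p'}(I;(X_0^{s,p}(\Omega,\Omega'))^*)=\mathcal V^*$ by hypothesis and $u_0-g_0\in L^2(\Omega)$, it is enough to check that the family $\{\mathcal{A}(t,\cdot)\}_{t\in I}$ verifies conditions (i)--(iii) of Theorem~\ref{thmShow}.

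Conditions (i) and the monotonicity/hemicontinuity part of (ii) are routine: measurability of $t\mapsto\mathcal{A}(\cdot,w)$ follows from measurability of $g$ and continuity of $J_p$; monotonicity follows from the identity $\langle \mathcal{A}(t,w_1)-\mathcal{A}(t,w_2),w_1-w_2\rangle=\iint\big(J_p(a_1-b_1)-J_p(a_2-b_2)\big)\big((a_1-a_2)-(b_1-b_2)\big)\,d\mu\ge 0$, where $a_i=(w_i+g)(x,t)$, $b_i=(w_i+g)(y,t)$ and we used $w_i-w_j=(w_i+g)-(w_j+g)$ together with the monotonicity of $J_p$; hemicontinuity follows by dominated convergence. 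The substantive part is the boundedness bound in (ii) and the coercivity (iii). For boundedness I would use $|J_p(\tau)|=|\tau|^{p-1}$ and split the double integral into the part over $\Omega'\times\Omega'$ (bounded by $C(\|w\|_{W^{s,p}(\Omega')}+[g(t)]_{W^{s,p}(\Omega')})^{p-1}\,[\phi]_{W^{s,p}(\Omega')}$ via $|a+b|^{p-1}\le C(|a|^{p-1}+|b|^{p-1})$ and Hölder) and the tail part over $(\mathbb{R}^N\setminus\Omega')\times\Omega$, which is controlled using that $\phi$ vanishes outside $\Omega$ together with $\sup_{x\in\Omega}\int_{\mathbb{R}^N\setminus\Omega'}\frac{|g(y,t)|^{p-1}}{|x-y|^{N+sp}}\,dy\le C\,\mathrm{Tail}_{p-1,sp}(g(\cdot,t);\cdot)^{p-1}$; altogether this gives (ii) with $k(t)^{p'}\sim[g(t)]^p_{W^{s,p}(\Omega')}+\|g(t)\|^p_{L^{p-1}_{s\,p}(\mathbb{R}^N)}\in L^1(I)$. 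For coercivity I write $w=(w+g)-g$, so that $\langle\mathcal{A}(t,w),w\rangle=[w+g]^p_{W^{s,p}(\mathbb{R}^N)}-\iint\frac{J_p((w+g)(x)-(w+g)(y))(g(x)-g(y))}{|x-y|^{N+sp}}\,d\mu$; using $[w+g]^p_{W^{s,p}}\ge 2^{1-p}[w]^p_{W^{s,p}}-[g]^p_{W^{s,p}}$, bounding the cross term by Young's inequality as $\varepsilon[w+g]^p_{W^{s,p}}+C_\varepsilon[g]^p_{W^{s,p}}$ and absorbing, and finally invoking the fractional Poincaré inequality (valid since $w\equiv 0$ outside $\Omega\Subset\Omega'$, with the tail norm of such $w$ dominated by $\|w\|_{L^{p-1}(\Omega)}\le C\|w\|_{W^{s,p}(\Omega')}$), I obtain $\langle\mathcal{A}(t,w),w\rangle+\ell(t)\ge\beta\|w\|^p_{W^{s,p}(\Omega')}$ with $\ell(t)\sim[g(t)]^p_{W^{s,p}(\Omega')}\in L^1(I)$, which is (iii).

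With (i)--(iii) in hand, Theorem~\ref{thmShow} provides a unique $v\in W_p(I)\subset C(I;L^2(\Omega))$ solving $v'+\mathcal{A}(t,v)=f-\partial_t g$ in $\mathcal V^*$ with $v(t_0)=u_0-g_0$. I then set $u=v+g$: the assumptions on $g$ and the embedding $W_p(I)\hookrightarrow C(I;L^2(\Omega))$ place $u$ in the function spaces required by Definition~\ref{bdryweak}, $u=g$ outside $\Omega$, and $u(\cdot,t)\to u_0$ in $L^2(\Omega)$ as $t\to t_0$ by continuity of $v$ and of $g$ at $t_0$. The weak formulation of Definition~\ref{bdryweak} follows by applying, to test functions $\phi\in L^p(J;X_0^{s,p}(\Omega,\Omega'))\cap C^1(J;L^2(\Omega))$, the integration-by-parts identity $\int_J\langle v',\phi\rangle\,dt=-\int_J\int_\Omega v\,\partial_t\phi\,dx\,dt+\int_\Omega v(T_1)\phi(T_1)\,dx-\int_\Omega v(T_0)\phi(T_0)\,dx$ recalled before Theorem~\ref{thmShow}, and then adding back the terms involving $g$. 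Uniqueness follows a standard argument: if $u_1,u_2$ are two weak solutions, then $w=u_1-u_2$ satisfies $w\in L^p(I;X_0^{s,p}(\Omega,\Omega'))$, $w'\in L^{p'}(I;(X_0^{s,p}(\Omega,\Omega'))^*)$, so $t\mapsto\|w(t)\|^2_{L^2(\Omega)}$ is absolutely continuous; testing the difference of the equations with $w$ and using monotonicity of $\mathcal{A}(t,\cdot)$ gives $\frac{d}{dt}\|w(t)\|^2_{L^2(\Omega)}\le 0$, and since $\|w(t_0)\|_{L^2(\Omega)}=0$ we conclude $u_1=u_2$. The main obstacle I expect is the careful bookkeeping in the estimates (ii)--(iii): correctly splitting off the contribution of $g$ on $\Omega'\times\Omega'$ from the far-field tail contribution, and identifying the $X_0^{s,p}(\Omega,\Omega')$-norm with the full Gagliardo seminorm on $\mathbb{R}^N$ for functions vanishing outside $\Omega$.
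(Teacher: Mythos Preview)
Your approach is essentially the same as the paper's: reduce to zero boundary data by subtracting $g$, apply Theorem~\ref{thmShow} with $V=X_0^{s,p}(\Omega,\Omega')$ and $H=L^2(\Omega)$, and translate back. The paper defines the shifted operator by explicitly splitting the double integral into a local piece on $\Omega'\times\Omega'$ and a tail piece on $\Omega\times(\mathbb{R}^N\setminus\Omega')$, which is what your definition reduces to once one uses that $\phi$ vanishes outside $\Omega$.

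One genuine imprecision to fix: in your coercivity step you write
\[
\langle\mathcal{A}(t,w),w\rangle=[w+g]^p_{W^{s,p}(\mathbb{R}^N)}-\iint\frac{J_p((w+g)(x)-(w+g)(y))\,(g(x)-g(y))}{|x-y|^{N+sp}}\,d\mu,
\]
and then use $[w+g]^p_{W^{s,p}(\mathbb{R}^N)}\ge 2^{1-p}[w]^p-[g]^p$. But the hypotheses only give $g(t)\in W^{s,p}(\Omega')\cap L^{p-1}_{sp}(\mathbb{R}^N)$, not $g(t)\in W^{s,p}(\mathbb{R}^N)$, so both terms on the right of your identity may be $+\infty$ and the manipulation is not justified. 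The paper avoids this by working with the split form from the start and obtaining directly
\[
\langle\mathcal{A}(v,t),v\rangle \ge c\,[v]_{W^{s,p}(\Omega')}^p - C_1\,\|\mathbf g(t)\|_{W^{s,p}(\Omega')}^p - C_2\,\|\mathbf g(t)\|_{L^{p-1}_{sp}(\mathbb{R}^N)}^p,
\]
which is then combined with Poincar\'e on $X_0^{s,p}(\Omega,\Omega')$. You already flagged the local/tail splitting as the anticipated obstacle; just carry it through in the coercivity estimate rather than passing through the full $\mathbb{R}^N$-seminorm of $w+g$. With that adjustment your argument matches the paper's.
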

\begin{proof}
We denote by $\bf g$ the mapping ${\bf g}:I\to W^{s,p}(\Omega')$, given by $({\bf{ g}}(t))(x) = g(x,t)$. 
For almost every $t\in I$, we define the operator 
\[
\mathcal{A}_t:X_{{\bf g}(t)}^{s,p}(\Omega,\Omega')\to (W^{s,p}(\Omega'))^*,
\] 
by 
\[
\begin{split}
\langle\mathcal{A}_t(v),\phi\rangle &= \iint_{\Omega'\times\Omega'}\frac{J_p(v(x)-v(y))\,(\phi(x)-\phi(y))}{|x-y|^{N+s\,p}}\,dx\,dy+2\,\iint_{\Omega\times(\mathbb{R}^N\setminus\Omega')}\frac{J_p(v(x)-g(y,t))\,\phi(x)}{|x-y|^{N+s\,p}}\,dx\,dy.
\end{split}
\]
It is easy to check that $\mathcal{A}_t(v)\in (W^{s,p}(\Omega))^*$ whenever $v\in X_{{\bf g}(t)}^{s,p}(\Omega,\Omega')$. Additionally, $\mathcal{A}_t$ is a monotone operator, see \cite[Lemma 3]{KKP}. We now define $\mathcal{A}:X_0^{s,p}(\Omega,\Omega')\times I\to (W^{s,p}(\Omega'))^*$ to be the operator defined by 
\[
\mathcal{A}(v,t) = \mathcal{A}_t(v+{\bf g}(t)). 
\]
Observe that this is well-defined, since 
\[
v+{\bf g}(t)\in X^{s,p}_{\mathbf{g}(t)}(\Omega,\Omega'),\qquad \mbox{ for every } v\in X^{s,p}_0(\Omega,\Omega').
\]
We next show that the operator $\mathcal{A}$, together with the spaces 
\[
V= X_0^{s,p}(\Omega,\Omega'),\qquad \mathcal{V}= L^p(I;X_0^{s,p}(\Omega,\Omega'))\qquad \mbox{ and }\quad H = L^2(\Omega),
\] 
fits into the framework of Theorem \ref{thmShow}. 
Since $p\ge 2$ and $\Omega'$ is bounded, $X_0^ {s,p}(\Omega,\Omega')$ is dense and continuously embedded in $L^2(\Omega)$. This follows from H\"older's inequality and the fact that smooth functions are dense in both spaces.
Note that $\mathcal{A}$ inherits the property of monotonicity from $\mathcal{A}_t$ since 
\[
\begin{split}
\langle \mathcal{A}(u,t)-\mathcal{A}(v,t),u-v\rangle &= \langle\mathcal{A}(u,t)-\mathcal{A}(v,t),u+{\bf{ g}}(t)-(v+{\bf{g}}(t))\rangle\\
& = \langle\mathcal{A}_t(u+{\bf{ g}}(t))-\mathcal{A}_t(v+{\bf{ g}}(t)),u+{\bf{ g}}(t)-(v+{\bf{g}}(t))\rangle\ge 0.
\end{split}
\]
We next claim that 
\begin{equation}\label{Mopnorm}
|\langle \mathcal{A}(v,t),\phi\rangle| \le C\|v\|^{p-1}_{W^{s,p}(\Omega')}\,\|\phi\|_{W^{s,p}(\Omega')} + C\,\Big(\|{\bf g}(t)\|_{W^{s,p}(\Omega')}^{p-1}+\|{\bf g}(t)\|_{L^{p-1}_{sp}(\mathbb{R}^N)}^{p-1}\Big)\,\|\phi\|_{W^{s,p}(\Omega')}. 
\end{equation}
We have 
\begin{equation}\label{Anoll}
\begin{split}
\langle\mathcal{A}(v,t),\phi\rangle &= \iint_{\Omega'\times \Omega'}\frac{J_p(v(x)-v(y)+(g(x,t)-g(y,t)))\,(\phi(x)-\phi(y))}{|x-y|^{N+s\,p}}\,dx\,dy\\
&+2\,\iint_{\Omega\times(\mathbb{R}^N\setminus\Omega')}\frac{J_p(v(x)+g(x,t)-g(y,t))\,\phi(x)}{|x-y|^{N+s\,p}}\,dx\,dy,
\end{split}
\end{equation}
The first term on the right-hand side of \eqref{Anoll} can be bounded by 
\[
C\,\Big(\|v\|^{p-1}_{W^{s,p}(\Omega')} + \|{\bf g}(t)\|^{p-1}_{W^{s,p}(\Omega')}\Big)\,\|\phi\|_{W^{s,p}(\Omega')},
\]
using H\"older's inequality. For the second term we observe that, when $x\in \Omega$ and $y\in \mathbb{R}^N\setminus \Omega'$,  
\[
\frac{1}{C}\,\frac{1}{1+|y|^{N+s\,p}}\le \frac{1}{|x-y|^{N+s\,p}}\le \frac{C}{1+|y|^{N+s\,p}}, 
\]
where $C>1$ depends only on the distance between $\Omega$ and $\Omega'$. Since $1/(1+|y|^{N+s\,p})\in L^1(\mathbb{R}^N)$, the second term in the right-hand side of \eqref{Anoll} can be estimated by 
\[
\begin{split}
C\,\int_{\Omega}\Big(|v(x)|^{p-1}+|g(x,t)|^{p-1}\Big)\,|\phi(x)|\,dx &+ C\,\left(\int_{\mathbb{R}^N\setminus \Omega'}\frac{|g(y,t)|^{p-1}}{1+|y|^{N+s\,p}}\,dy\right)\,\int_{\Omega}|\phi(x)|dx\\
&\le C\,\Big(\|v\|_{L^p(\Omega)}^{p-1} + \|\mathbf{g}(t)\|_{L^p(\Omega)}^{p-1}\Big)\, \|\phi\|_{L^p(\Omega)} + \|\mathbf{g}(t)\|_{L^{p-1}_{s\,p}(\mathbb{R}^N)}^{p-1}\,\|\phi\|_{L^1(\Omega)}\\
&\le C\,\Big(\|v\|^{p-1}_{W^{s,p}(\Omega')} + \|\mathbf{g}(t)\|^{p-1}_{W^{s,p}(\Omega')} + \|\mathbf{g}(t)\|_{L^{p-1}_{s\,p}(\mathbb{R}^N)}^{p-1}\Big)\,\|\phi\|_{W^{s,p}(\Omega')}, 
\end{split}
\]
where we used the continuous inclusion $L^p(\Omega)\subset W^{s,p}(\Omega')$. This finally shows \eqref{Mopnorm}.
Observe that 
\[
t\mapsto \|{\bf g}(t)\|^{p-1}_{W^{s,p}(\Omega')}+\|{\bf g}(t)\|_{L^{p-1}_{s\,p}(\mathbb{R}^N)}^{p-1} \quad \mbox{ belongs to }L^{p'}(I),
\] 
thanks to the assumptions on $g$. Thus in order to verify ($ii$) of Theorem \ref{thmShow}, we are left with proving hemicontinuity. For this, fixed $t\in I$ and $\lambda,\lambda_0\in\mathbb{R}$, we consider
\[
\langle \mathcal{A}(u+\lambda\,v,t),v\rangle-\langle \mathcal{A}(u+\lambda_0\,v,t),v\rangle,\qquad \mbox{ for } u,v\in X^{s,p}_0(\Omega,\Omega').
\] 
In order to show that this differences goes to $0$ as $\lambda$ goes to $\lambda_0$, it is sufficient to write
\[
\begin{split}
\langle \mathcal{A}(u+\lambda\,v,t),v\rangle-\langle \mathcal{A}(u+\lambda_0\,v,t),v\rangle&=\langle \mathcal{A}(u+\lambda\,v,t)-\mathcal{A}(u+\lambda_0\,v,t),v\rangle\\
&=\langle \mathcal{A}_t(u+\mathbf{g}(t)+\lambda\,v)-\mathcal{A}(u+\mathbf{g}(t)+\lambda_0\,v),v\rangle
\end{split}
\]
and then use \cite[Lemma 3]{KKP}. This proves that $\mathcal{A}$ is hemicontinuous for almost every\ $t\in I$. 
\par
Finally, as for hypothesis $(iii)$ of Theorem \ref{thmShow}, we observe that if $v\in X_0^{s,p}(\Omega,\Omega')$, then by using Poincar\'e inequality we have
\[
\|v\|_{W^{s,p}(\Omega')}=\|v\|_{L^p(\Omega')}+[v]_{W^{s,p}(\Omega')}\le C\,[v]_{W^{s,p}(\Omega')},
\]
for a constant $C=C(N,p,s,\Omega,\Omega')>0$.
Additionally, using H\"older's inequality and Young's inequality, we obtain 
\[
\langle\mathcal{A}(v,t),v\rangle \ge c\,[v]_{W^{s,p}(\Omega')}^p - C_1\,\|\mathbf{g}(t)\|_{W^{s,p}(\Omega')}^p - C_2\,\|\mathbf{g}(t)\|_{L^{p-1}_{s\,p}(\mathbb{R}^N)}^p.
\]
By combining this with the previous estimate, hypothesis $(iii)$ of Theorem \ref{thmShow} is checked.
According to \eqref{CIH1}, $g\in C(I;L^2(\Omega))$ and we may define $g_0=\mathbf{g}(t_0)$ in $L^2(\Omega)$. 
From Theorem \ref{thmShow}, for every $u_0\in L^2(\Omega)$ we obtain a unique solution 
\[
v\in W_p(I)=\Big\{\varphi \in L^p(I;X_0^{s,p}(\Omega,\Omega'))\, :\, \varphi'\in L^{p'}(I;(X_0^{s,p}(\Omega,\Omega'))^*)\Big\},
\]
to the problem 
\[
\partial_t v+\mathcal{A}( v,t) = -\partial_t {\bf{g}}(t) + f(t)\quad \text{in }L^{p'}(I;(X_0^{s,p}(\Omega,\Omega'))^*),\qquad \mbox{ with }\ v(t_0)=u_0-g_0. 
\]
Observe that again by \eqref{CIH1}, we also have $v\in C(I;L^2(\Omega))$. Since $v$ is a solution, we have
\[
\int_{t_0}^{t_1} \langle \partial_t v+\partial_t {\bf{g}}(t),\phi(t)\rangle\,dt+\int_{t_0}^{t_1}\langle\mathcal{A}_t( v+\mathbf{g}(t)),\phi\rangle\,dt=\int_{t_0}^{t_1} \langle f(t),\phi(t)\rangle\,dt,
\]
for every $\phi\in L^{p}(I;X_0^{s,p}(\Omega,\Omega'))$.
Upon setting $u=v+g$, we find that 
\[
u\in C(I;L^2(\Omega))\cap L^p(I;X_{\mathbf{g}(\cdot)}(\Omega,\Omega'))\cap L^{p}(I;L_{s\,p}^{p-1}(\mathbb{R}^N)),\quad \text{with }\partial_tu\in L^{p'}(I;(X^{s,p}_0(\Omega,\Omega'))^*). 
\]
and it verifies
\[
\int_{t_0}^{t_1} \langle \partial_t u,\phi(t)\rangle\,dt+\int_{t_0}^{t_1}\langle\mathcal{A}_t(u),\phi\rangle\,dt=\int_{t_0}^{t_1} \langle f(t),\phi(t)\rangle\,dt,
\]
for every $\phi\in L^{p}(I;X_0^{s,p}(\Omega,\Omega'))$. In particular, if we take $J=[T_0,T_1]\subset I$ and $\phi\in L^p(J;X_0^{s,p}(\Omega,\Omega'))$, by extending $\phi$ to be $0$ outside $J$ we get
\[
\int_{T_0}^{T_1} \langle \partial_t u,\phi(t)\rangle\,dt+\int_{T_0}^{T_1}\langle\mathcal{A}_t(u),\phi\rangle\,dt=\int_{T_0}^{T_1} \langle f(t),\phi(t)\rangle\,dt
\]
If now the test function $\phi$ is further supposed to belong to $L^{p}(J;X_0^{s,p}(\Omega,\Omega'))\cap C^1(J;L^2(\Omega))$, we can integrate by parts
\[
\begin{split}
\int_{T_0}^{T_1} \langle \partial_t u,\phi(t)\rangle\,dt&=\langle u(T_1),\phi(T_1)\rangle-\langle u(T_0),\phi(T_0)\rangle-\int_{T_0}^{T_1} \langle u,\phi'(t)\rangle\,dt\\
&=(u(T_1),\phi(T_1))_{L^2(\Omega)}-(u(T_0),\phi(T_0))_{L^2(\Omega)}-\int_{T_1}^{T_1} ( u,\phi'(t))_{L^2(\Omega)}\,dt\\
&=\int_\Omega u(x,T_1)\,\phi(x,T_1)\,dx-\int_\Omega u(x,T_0)\,\phi(x,T_0)\,dx -\int_{T_1}^{T_1} \langle u(t),\phi'(t)\rangle\,dt.
\end{split}
\]
Thus we obtained
\[
\int_\Omega u(x,T_1)\,\phi(x,T_1)\,dx-\int_\Omega u(x,T_0)\,\phi(x,T_0)\,dx -\int_J \langle u(t),\phi'(t)\rangle\,dt + \int_J \langle\mathcal{A}_t(u),\phi\rangle\,dt = \int_J \langle f(t),\phi(t)\rangle\,dt,
\] 
for every $J=[T_0,T_1]\subset I$ and every $\phi\in L^{p}(J;X_0^{s,p}(\Omega,\Omega'))\cap C^1(J;L^2(\Omega))$.
By recalling the definition of $\mathcal{A}_t$, this shows $u$ is a weak solution of \eqref{Mweaksol}. 
\end{proof}

\begin{prop}[Comparison principle]
\label{prop:linftyglobal}
Let $p\ge 2$, $I = [t_0,t_1]$ and suppose that $g$ satisfies
\begin{align*}
&g\in L^p(I;W^{s,p}(\Omega'))\cap L^{p-1}(I;L^{p-1}_{s\,p}(\mathbb{R}^N)),\quad\partial_t g\in L^{p'}(I;(X_0^{s,p}(\Omega,\Omega'))^*),\\
&\lim_{t\to t_0}\|g(\cdot,t)-g_0\|_{L^2(\Omega)},\qquad \mbox{ for some }g_0\in L^2(\Omega). 
\end{align*}
Given an initial datum $u_0\in L^2(\Omega)$, we consider the unique weak solution $u$ to the initial boundary value problem
\[
 \left\{\begin{array}{rcll}
\partial_tu + (-\Delta_p)^su&=&0,&\mbox{ in }\Omega\times I,\\
u&=&g,& \mbox{ on }(\mathbb{R}^N\setminus\Omega)\times I,\\
u(\cdot,t_0) &=& u_0,&\mbox{ on }\Omega.
\end{array}
\right.
\]
If there exists $M\in\mathbb{R}$ such that
\[
u_0\le M \mbox{ in }\Omega\qquad \mbox{ and }\qquad g\le M \mbox{ in }\mathbb{R}^N\times I,
\]
then we also have 
\[
u(x,t)\le M,\qquad \mbox{ in }\mathbb{R}^N\times I.
\] 
\end{prop}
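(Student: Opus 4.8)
The plan is to test the weak formulation against the truncation $(u-M)_+$ and combine the monotonicity of $(-\Delta_p)^s$ with the energy identity coming from the time derivative. Since $u$ is the weak solution produced by Theorem~\ref{thm:exunsol} (so that, in addition to the regularity in Definition~\ref{bdryweak}, we have $\partial_t u\in L^{p'}(I;(X_0^{s,p}(\Omega,\Omega'))^*)$ and, with $f\equiv 0$, the identity $\int_{t_0}^{t_1}\langle\partial_t u,\phi\rangle\,dt+\int_{t_0}^{t_1}\langle\mathcal{A}_t(u),\phi\rangle\,dt=0$ for every $\phi\in L^p(I;X_0^{s,p}(\Omega,\Omega'))$), I first check that $\phi:=(u-M)_+$ is admissible. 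Because $u=g$ on $\mathbb{R}^N\setminus\Omega$ and $g\le M$ there, one has $(u-M)_+\equiv 0$ on $\mathbb{R}^N\setminus\Omega$; since $r\mapsto(r-M)_+$ is $1$-Lipschitz, $(u-M)_+(\cdot,t)\in W^{s,p}(\Omega')$ for a.e.\ $t$, and being supported in the bounded set $\overline{\Omega}$ it also lies in $L^{p-1}_{s\,p}(\mathbb{R}^N)$. Hence $(u-M)_+\in L^p(I;X_0^{s,p}(\Omega,\Omega'))$, and choosing $\phi$ equal to $(u-M)_+$ on $[t_0,T]$ and to $0$ afterwards gives
\[
\int_{t_0}^{T}\langle\partial_t u,(u-M)_+\rangle\,dt+\int_{t_0}^{T}\langle\mathcal{A}_t(u),(u-M)_+\rangle\,dt=0,\qquad\text{for a.e. }T\in I.
\]

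For the operator term I would rewrite $\langle\mathcal{A}_t(u),(u-M)_+\rangle$ in ``full'' nonlocal form. Since $(u-M)_+$ vanishes outside $\Omega$ and $u=g$ outside $\Omega'$, splitting $\mathbb{R}^N\times\mathbb{R}^N$ into the contributions of $\Omega'\times\Omega'$, of $\Omega\times(\mathbb{R}^N\setminus\Omega')$ and of its symmetric counterpart (all absolutely convergent, by H\"older's inequality on $\Omega'$ and the $L^{p-1}_{s\,p}$-integrability of $g$ for the far pieces) yields
\[
\langle\mathcal{A}_t(u),(u-M)_+\rangle=\iint_{\mathbb{R}^N\times\mathbb{R}^N}\frac{J_p(u(x,t)-u(y,t))\,\big((u(x,t)-M)_+-(u(y,t)-M)_+\big)}{|x-y|^{N+s\,p}}\,dx\,dy.
\]
Now $r\mapsto J_p(r)$ and $r\mapsto(r-M)_+$ are both nondecreasing, so $u(x,t)-u(y,t)$ and $(u(x,t)-M)_+-(u(y,t)-M)_+$ always have the same sign, the integrand is pointwise nonnegative, and therefore $\int_{t_0}^{T}\langle\mathcal{A}_t(u),(u-M)_+\rangle\,dt\ge 0$.

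For the time term I would establish the energy identity $\int_{t_0}^{T}\langle\partial_t u,(u-M)_+\rangle\,dt=\tfrac12\|(u(\cdot,T)-M)_+\|_{L^2(\Omega)}^2-\tfrac12\|(u_0-M)_+\|_{L^2(\Omega)}^2$. This is the main technical point, because $u$ itself (unlike $u-g$) is not in $L^p(I;X_0^{s,p}(\Omega,\Omega'))$, so the Gelfand-triple integration-by-parts formula recalled in the Appendix does not apply directly. I would argue by regularization in time: writing $u_h(t)=\tfrac1h\int_t^{t+h}u(\tau)\,d\tau$ for the Steklov average, which is Lipschitz in time with $\partial_t u_h=(\partial_t u)_h$, the classical chain rule gives $\int_\Omega\big(\tfrac12(u_h(T_2)-M)_+^2-\tfrac12(u_h(T_1)-M)_+^2\big)\,dx=\int_{T_1}^{T_2}\langle(\partial_t u)_h,(u_h-M)_+\rangle\,dt$; one then lets $h\to 0$ using $(\partial_t u)_h\to\partial_t u$ in $L^{p'}_{\rm loc}(I;(X_0^{s,p})^*)$, $u_h\to u$ in $L^p_{\rm loc}(I;W^{s,p}(\Omega'))$ and in $C(I;L^2(\Omega))$, together with the continuity of $r\mapsto(r-M)_+$, and finally lets $T_1\downarrow t_0$ exploiting $u\in C(I;L^2(\Omega))$ with $u(\cdot,t_0)=u_0$. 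Since $u_0\le M$ in $\Omega$ forces $(u_0-M)_+=0$, combining the three displays gives $\tfrac12\|(u(\cdot,T)-M)_+\|_{L^2(\Omega)}^2\le 0$ for a.e.\ $T\in I$; hence $u\le M$ a.e.\ in $\Omega\times I$, and together with $u=g\le M$ on $(\mathbb{R}^N\setminus\Omega)\times I$ we conclude $u\le M$ a.e.\ in $\mathbb{R}^N\times I$. The delicate step is the energy identity above; everything else is routine once the correct test function and the pointwise monotonicity inequality are in place.
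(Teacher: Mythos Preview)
Your proposal is correct and follows essentially the same strategy as the paper: test the equation against $(u-M)_+$, exploit monotonicity to discard the nonlocal term, and obtain the energy identity for the time derivative via a time-regularization argument. The paper uses convolution mollifiers $u^\varepsilon$ (as in Lemma~\ref{Mlemreg}) in place of your Steklov averages and records the slightly sharper inequality $J_p(a-b)\big((a-M)_+-(b-M)_+\big)\ge |(a-M)_+-(b-M)_+|^p$ to get a $W^{s,p}$-seminorm on the left-hand side, but this extra term is then dropped anyway, so the two arguments are effectively the same.
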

\begin{proof}
We take $J=[T_0,T_1]\Subset (t_0,t_1)$, by proceeding as in the first part of Lemma \ref{Mlemreg}, we obtain
\[
\begin{split}
\int_{T_0}^{T_1} \iint_{\mathbb{R}^N\times\mathbb{R}^N} &\Big(J_p(u(x,t)-u(y,t))\Big)\,\Big(\phi^\varepsilon(x,t)-\phi^\varepsilon(y,t)\Big)\,d\mu(x,y)\,dt\\
&+\int_{\Omega}\int_{T_0+\frac{\varepsilon}{2}}^{T_1-\frac{\varepsilon}{2}} \partial_t u^\varepsilon(x,t)\,\phi(x,t)\,dt\,dx+\Sigma(\varepsilon)\\
& = \int_{\Omega} \left[u(x,T_0)\,\phi(x,T_0)-u^\varepsilon\left(x,T_0+\frac{\varepsilon}{2}\right)\,\phi\left(x,T_0+\frac{\varepsilon}{2}\right)\right]\,dx \\
&+\int_{\Omega} \left[u^\varepsilon\left(x,T_1-\frac{\varepsilon}{2}\right)\,\phi\left(x,T_1-\frac{\varepsilon}{2}\right)-u(x,T_1)\,\phi(x,T_1)\,dx\right]\,dx,
\end{split}
\]
for every $\phi\in L^{p}(J;X_0^{s,p}(\Omega,\Omega'))\cap C^1(J;L^2(\Omega))$. We still use the notation $\phi^\varepsilon$ and $u^\varepsilon$ for the convolution in the time variable, as defined in \eqref{convolution}. Moreover, we still indicate by $\Sigma(\varepsilon)$ the error term \eqref{sigma}. We now 
take the test function\footnote{By construction, for $x\in\mathbb{R}^N\setminus\Omega$ and $t\in J$ we have
\[
u^\varepsilon(x,t)=\frac{1}{\varepsilon}\,\int_{-\frac{\varepsilon}{2}}^\frac{\varepsilon}{2} \zeta\left(\frac{\sigma}{\varepsilon}\right)\,u(x,t-\sigma)\,d\sigma=\frac{1}{\varepsilon}\,\int_{-\frac{\varepsilon}{2}}^\frac{\varepsilon}{2} \zeta\left(\frac{\sigma}{\varepsilon}\right)\,g(x,t-\sigma)\,d\sigma\le M, 
\]
for $\varepsilon\ll 1$. Thus $(u(\cdot,t)-M)_+\in X^{s,p}_0(\Omega,\Omega')$.}
\[
\phi(x,t)=(u^\varepsilon(x,t)-M)_+.
\]
Observe that this function is only Lipschitz in time, but it is not difficult to see that Lipschitz functions are still feasible test functions (by a simple density argument). This gives
\[
\begin{split}
\int_{\Omega}\int_{T_0+\frac{\varepsilon}{2}}^{T_1-\frac{\varepsilon}{2}} \partial_t u^\varepsilon(x,t)\,\phi(x,t)\,dt\,dx&=\int_{\Omega}\int_{T_0+\frac{\varepsilon}{2}}^{T_1-\frac{\varepsilon}{2}} \partial_t u^\varepsilon(x,t)\,(u^\varepsilon(x,t)-M)_+\,dt\,dx\\
&=\int_{\Omega}\int_{T_0+\frac{\varepsilon}{2}}^{T_1-\frac{\varepsilon}{2}} \partial_t \frac{(u^\varepsilon(x,t)-M)^2_+}{2}\,dt\,dx\\
&=\frac{1}{2}\, \int_{\Omega} \left(u^\varepsilon\left(x,T_1-\frac{\varepsilon}{2}\right)-M\right)^2_+\,dx-\frac{1}{2}\,\int_{B_2} \left(u^\varepsilon\left(x,T_0+\frac{\varepsilon}{2}\right)-M\right)^2_+\,dx.
\end{split}
\]
On the other hand
\[
\begin{split}
\int_{\Omega} &\left[u(x,T_0)\,\phi(x,T_0)-u^\varepsilon\left(x,T_0+\frac{\varepsilon}{2}\right)\,\phi\left(x,T_0+\frac{\varepsilon}{2}\right)\right]\,dx \\
&+\int_{\Omega} \left[u^\varepsilon\left(x,T_1-\frac{\varepsilon}{2}\right)\,\phi\left(x,T_1-\frac{\varepsilon}{2}\right)-u(x,T_1)\,\phi(x,T_1)\,dx\right]\,dx\\
&=\int_{\Omega} \left[u(x,T_0)\,(u^\varepsilon(x,T_0)-M)_+-u^\varepsilon\left(x,T_0+\frac{\varepsilon}{2}\right)\,\left(u^\varepsilon\left(x,T_0+\frac{\varepsilon}{2}\right)-M\right)_+\right]\,dx \\
&+\int_{\Omega} \left[u^\varepsilon\left(x,T_1-\frac{\varepsilon}{2}\right)\,\left(u^\varepsilon\left(x,T_1-\frac{\varepsilon}{2}\right)-M\right)_+-u(x,T_1)\,(u^\varepsilon(x,T_1)-M)_+\right]\,dx.
\end{split}
\]
By taking the limit as $\varepsilon$ goes to $0$, we thus get
\[
\begin{split}
\int_{T_0}^{T_1} \iint_{\mathbb{R}^N\times\mathbb{R}^N} &\Big(J_p(u(x,t)-u(y,t))\Big)\,\Big((u(x,t)-M)_+-(u(y,t)-M)_+\Big)\,d\mu(x,y)\,dt\\
&+\frac{1}{2}\, \int_{\Omega} \left(u\left(x,T_1\right)-M\right)^2_+\,dx-\frac{1}{2}\,\int_{\Omega} \left(u\left(x,T_0\right)-M\right)^2_+\,dx=0.
\end{split}
\]
By using that (see \cite[Lemma A.2]{BP})
\[
J_p(a-b)\,((a-M)_+-(b-M)_+)\ge |(a-M)_+-(b-M)_+|^p,
\]
we thus get
\[
\int_{T_0}^{T_1} \big[(u-M)_+\big]_{W^{s,p}(\mathbb{R}^N)}^p\,dt\le \frac{1}{2}\,\int_{\Omega} \left(u\left(x,T_0\right)-M\right)^2_+\,dx-\frac{1}{2}\, \int_{\Omega} \left(u\left(x,T_1\right)-M\right)^2_+\,dx.
\]
This is valid for every $t_0<T_0<T_1<t_1$.
By using that 
\[
u\in L^p(I;W^{s,p}(\Omega'))\cap L^{p-1}(I;L_{s\,p}^{p-1}(\mathbb{R}^N))\subset L^p(I;W^{s,p}(\mathbb{R}^N)),
\] 
and that $u\in C(I;L^2(\Omega))$, we can pass to the limit as $T_0$ goes to $t_0$ and obtain
\[
\begin{split}
0\le \int_{t_0}^{T_1} \big[(u-M)_+\big]_{W^{s,p}(\mathbb{R}^N)}^p\,dt\le \frac{1}{2}\,\int_{\Omega} \left(u_0(x)-M\right)^2_+\,dx&-\frac{1}{2}\, \int_{\Omega} \left(u\left(x,T_1\right)-M\right)^2_+\,dx\\
&=-\frac{1}{2}\, \int_{\Omega} \left(u\left(x,T_1\right)-M\right)^2_+\,dx.
\end{split}
\]
We used that $u_0\le M$, by construction.
This implies that 
\[
u(x,T_1)\le M,\qquad \mbox{ for a.\,e. }x\in\Omega.
\]
Since $T_1$ is arbitrary, we finally get that 
\[
u(x,t)\le M,\qquad \mbox{ for a.\,e. } x \in\Omega, \mbox{ for } t\in I.
\]
This concludes the proof.
\end{proof}
As a straightforward consequence of the previous result, we get the following
\begin{coro}[Global $L^\infty$ estimate]
\label{coro:linftyglobal}
Under the assumptions of Proposition \ref{prop:linftyglobal}, assume further that
\[
g\in L^\infty(I;L^\infty(\mathbb{R}^N))\qquad \mbox{ and }\qquad u_0\in L^\infty(\Omega).
\]
Then 
\[
\|u\|_{L^\infty(\mathbb{R}^N\times I)}\le \|u_0\|_{L^\infty(\Omega)}+\|g\|_{L^\infty(\mathbb{R}^N\times I)}.
\] 
\end{coro}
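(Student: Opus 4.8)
The plan is to apply the comparison principle of Proposition \ref{prop:linftyglobal} twice, once to $u$ and once to $-u$, exploiting the oddness of the operator $(-\Delta_p)^s$. First I would set
\[
M := \|u_0\|_{L^\infty(\Omega)}+\|g\|_{L^\infty(\mathbb{R}^N\times I)}.
\]
By the additional hypotheses $u_0\le M$ a.e.\ in $\Omega$ and $g\le M$ a.e.\ in $\mathbb{R}^N\times I$, so a direct application of Proposition \ref{prop:linftyglobal} with this choice of $M$ immediately gives $u\le M$ a.e.\ in $\mathbb{R}^N\times I$.

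For the matching lower bound, I would observe that $v:=-u$ is again a weak solution, now of the initial boundary value problem with data $-u_0$ and $-g$. Indeed, since $J_p$ is odd we have $(-\Delta_p)^s(-u)=-(-\Delta_p)^s u$, whence $\partial_t v+(-\Delta_p)^s v=-\bigl(\partial_t u+(-\Delta_p)^s u\bigr)=0$; moreover $-g$ and $\partial_t(-g)$ have exactly the same integrability as $g$ and $\partial_t g$, and $-g(\cdot,t)\to -g_0$ in $L^2(\Omega)$ as $t\to t_0$, so all the structural assumptions of Proposition \ref{prop:linftyglobal} are satisfied by $v$ (alternatively, by uniqueness in Theorem \ref{thm:exunsol}, $v$ is precisely \emph{the} weak solution attached to these data). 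Since $-u_0\le \|u_0\|_{L^\infty(\Omega)}\le M$ in $\Omega$ and $-g\le \|g\|_{L^\infty(\mathbb{R}^N\times I)}\le M$ in $\mathbb{R}^N\times I$, applying Proposition \ref{prop:linftyglobal} to $v$ yields $-u=v\le M$, i.e.\ $u\ge -M$ a.e.\ in $\mathbb{R}^N\times I$.

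Combining the two inequalities gives $|u|\le M$ a.e.\ in $\mathbb{R}^N\times I$, which is exactly the asserted estimate. There is no genuine obstacle in this argument; the only point worth emphasizing is that the single constant $M=\|u_0\|_{L^\infty(\Omega)}+\|g\|_{L^\infty(\mathbb{R}^N\times I)}$ dominates from above the data of both $u$ and $-u$, so one invocation of the comparison principle in each direction is enough, and it is precisely the oddness of the fractional $p$-Laplacian that makes $-u$ an admissible competitor.
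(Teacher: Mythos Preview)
Your proof is correct and follows essentially the same approach as the paper: set $M=\|u_0\|_{L^\infty(\Omega)}+\|g\|_{L^\infty(\mathbb{R}^N\times I)}$, apply Proposition \ref{prop:linftyglobal} to $u$ for the upper bound, and then to $-u$ (using the oddness of $(-\Delta_p)^s$) for the lower bound. Your write-up is in fact slightly more explicit than the paper's in checking that $-u$ again satisfies the structural hypotheses.
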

\begin{proof}
By using Proposition \eqref{prop:linftyglobal} with
\[
M=\|u_0\|_{L^\infty(\Omega)}+\|g\|_{L^\infty(\mathbb{R}^N\times I)},
\]
we get $u\le M$. To get the lower bound, it is sufficient to observe that $-u$ solves the initial boundary value problem for the same equation, with data $-g\le M$ and $-u_0\le M$. By Proposition \eqref{prop:linftyglobal} again, we get $-u\le M$, as well.
\end{proof}

We also include the following comparison principle with bounded subsolutions.

\begin{prop}[Comparison with subsolutions]
\label{prop:subcomp}
Let $p\ge 2$, $I = [t_0,t_1]$ and suppose that $v\in L^\infty(I;L^\infty(\mathbb{R}^N))$ is a local weak subsolution in $\Omega \times I$ satisfying
\begin{align*}
&v\in L^p(I;W^{s,p}(\Omega'))\cap C(I;L^2(\Omega)),\quad\partial_t v\in L^{p'}(I;(X_0^{s,p}(\Omega,\Omega'))^*),\\
&\lim_{t\to t_0}\|v(\cdot,t)-v_0\|_{L^2(\Omega)},\qquad \mbox{ for some }v_0\in L^2(\Omega). 
\end{align*}
Consider the unique weak solution $u$ to the initial boundary value problem
\[
 \left\{\begin{array}{rcll}
\partial_tu + (-\Delta_p)^su&=&0,&\mbox{ in }\Omega\times I,\\
u&=&v,& \mbox{ on }\mathbb{R}^N\setminus\Omega\times I,\\
u(\cdot,t_0) &=& v_0,&\mbox{ on }\Omega.
\end{array}
\right.
\]
Then 
\[
u(x,t)\ge v(x,t),\qquad \mbox{ in }\mathbb{R}^N\times I.
\] 
\end{prop}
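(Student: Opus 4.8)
The plan is to reproduce, almost verbatim, the energy argument from the proof of Proposition~\ref{prop:linftyglobal}, with the constant barrier $M$ there replaced by the weak solution $u$ itself. Set $w:=v-u$. Since $u$ and $v$ coincide on $(\mathbb{R}^N\setminus\Omega)\times I$, we have $w=0$ there; since $u,v\in C(I;L^2(\Omega))$ and both have $L^2(\Omega)$-initial trace equal to $v_0$, the function $w$ belongs to $C(I;L^2(\Omega))$ with $w(\cdot,t_0)=0$. The goal is to show that $w_+(\cdot,t)=0$ in $L^2(\Omega)$ for every $t\in I$; together with $w=0$ outside $\Omega$, this is precisely the desired inequality $u\ge v$ in $\mathbb{R}^N\times I$.

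First I would fix $J=[T_0,T_1]\Subset(t_0,t_1)$, write the weak formulation \eqref{locweakeq} (with $f=0$) for $u$, which is an equality, and the subsolution inequality for $v$, which holds when tested against non-negative test functions, and subtract the former from the latter. Running the time-regularization of Lemma~\ref{Mlemreg} exactly as in the first part of the proof of Proposition~\ref{prop:linftyglobal}, this produces an \emph{inequality} of the same shape, in which $u$ is replaced by $w$ and the single monotone operator by the difference $J_p(v(x,t)-v(y,t))-J_p(u(x,t)-u(y,t))$. I would then insert the test function $\phi=(w^\varepsilon)_+$, which is Lipschitz in time (hence admissible by density) and belongs to $X_0^{s,p}(\Omega,\Omega')$ because $w^\varepsilon=v^\varepsilon-u^\varepsilon$ vanishes outside $\Omega$ — there $v$ and $u$ agree, and the mollification in time preserves this. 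Performing the same integration by parts in time and letting $\varepsilon\to 0$ (using $u,v\in C(I;L^2(\Omega))$ and dominated convergence for the error terms $\Sigma(\varepsilon)$), I would reach
\[
\int_{T_0}^{T_1}\iint_{\mathbb{R}^N\times\mathbb{R}^N}\Big(J_p(v(x,t)-v(y,t))-J_p(u(x,t)-u(y,t))\Big)\,\Big(w_+(x,t)-w_+(y,t)\Big)\,d\mu(x,y)\,dt+\frac12\int_\Omega w_+(x,T_1)^2\,dx\le\frac12\int_\Omega w_+(x,T_0)^2\,dx.
\]

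The decisive observation is that the double integral above has a non-negative integrand. Setting $a=v(x,t)-v(y,t)$ and $b=u(x,t)-u(y,t)$, one has $a-b=w(x,t)-w(y,t)$, and since both $r\mapsto J_p(r)$ and $r\mapsto r_+$ are non-decreasing, the factors $J_p(a)-J_p(b)$ and $w_+(x,t)-w_+(y,t)$ carry the same sign (a simple case distinction; see also \cite[Lemma A.2]{BP}). Hence $\|w_+(\cdot,T_1)\|_{L^2(\Omega)}^2\le\|w_+(\cdot,T_0)\|_{L^2(\Omega)}^2$ for all $t_0<T_0<T_1<t_1$. Letting $T_0\to t_0^+$ and using $w\in C(I;L^2(\Omega))$ with $w(\cdot,t_0)=0$, the right-hand side vanishes, so $w_+(\cdot,T_1)=0$ a.e.\ in $\Omega$ for every $T_1\in(t_0,t_1)$, hence (by continuity also at $t_1$, and with $w=0$ outside $\Omega$) $v\le u$ a.e.\ in $\mathbb{R}^N\times I$.

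The only real difficulty is the one already encountered in Lemma~\ref{Mlemreg} and Proposition~\ref{prop:linftyglobal}: the competitors have too little regularity in time for $w$, let alone $w_+$, to be used directly as a test function, which forces the whole argument through the mollification $w^\varepsilon$ and the somewhat heavy bookkeeping of the boundary error terms $\Sigma(\varepsilon)$. Beyond that, the single new point to check is that $(w^\varepsilon)_+$ stays in $X_0^{s,p}(\Omega,\Omega')$ — true since $v$ and $u$ agree outside $\Omega$ — and then everything reduces to a line-by-line repetition of the computations in the proof of Proposition~\ref{prop:linftyglobal}, with the equalities there becoming the relevant inequalities.
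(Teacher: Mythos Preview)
Your proposal is correct and follows the paper's proof essentially line by line: regularize in time, test with $(v^\varepsilon-u^\varepsilon)_+$, pass to the limit, and use monotonicity together with $w(\cdot,t_0)=0$. The only cosmetic difference is that the paper invokes the quantitative inequality \cite[Lemma~A.3]{bralinschi} to bound the nonlocal term from below by $[(v-u)_+]_{W^{s,p}}^p$, whereas you simply observe that the integrand is nonnegative by monotonicity of $J_p$ and of $r\mapsto r_+$; your cheaper observation already suffices, and your citation of \cite[Lemma~A.2]{BP} is not quite the right one here but is also not needed.
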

\begin{proof} The proof is almost identical with the proof of Proposition \ref{prop:linftyglobal}. We give some details below. Take $J=[T_0,T_1]\Subset (t_0,t_1)$. Again, as in the first part of Lemma \ref{Mlemreg}, we obtain
\[
\begin{split}
\int_{T_0}^{T_1} \iint_{\mathbb{R}^N\times\mathbb{R}^N} &\Big((J_p(v(x,t)-v(y,t))-J_p(u(x,t)-u(y,t))\Big)\,\Big(\phi^\varepsilon(x,t)-\phi^\varepsilon(y,t)\Big)\,d\mu(x,y)\,dt\\
&+\int_{\Omega}\int_{T_0+\frac{\varepsilon}{2}}^{T_1-\frac{\varepsilon}{2}} \partial_t (v^\varepsilon(x,t)-u^\varepsilon(x,\tau))\,\phi(x,t)\,dt\,dx+\Sigma(\varepsilon)\\
& \leq \int_{\Omega} \left[(v(x,T_0)-u(x,T_0))\,\phi(x,T_0)-\left(v^\varepsilon\left(x,T_0+\frac{\varepsilon}{2}\right)-u^\varepsilon\left(x,T_0+\frac{\varepsilon}{2}\right)\right)\,\phi\left(x,T_0+\frac{\varepsilon}{2}\right)\right]\,dx \\
&+\int_{\Omega} \left[\left(v^\varepsilon\, \left(x,T_1-\frac{\varepsilon}{2}\right)-u^\varepsilon\left(x,T_1-\frac{\varepsilon}{2}\right)\right)\,\phi\left(x,T_1-\frac{\varepsilon}{2}\right)-(v(x,T_1)-u(x,T_1))\,\phi(x,T_1)\right]\,dx,
\end{split}\]
for every non-negative $\phi\in L^{p}(J;X_0^{s,p}(\Omega,\Omega'))\cap C^1(J;L^2(\Omega))$. The quantity $\Sigma(\varepsilon)$ is still defined in \eqref{sigma}, with $v-u$ in place of $u$.
Observe that now we have an inequality, since $v$ is merely a subsolution. Take the test function
\[
\phi(x,t)=(v^\varepsilon(x,t)-u^\varepsilon(x,t))_+.
\]
This gives
\[
\begin{split}
\int_{\Omega}\int_{T_0+\frac{\varepsilon}{2}}^{T_1-\frac{\varepsilon}{2}} \partial_t (v^\varepsilon(x,t)-u^\varepsilon(x,\tau))\,\phi(x,t)\,dt\,dx&=\int_{\Omega}\int_{T_0+\frac{\varepsilon}{2}}^{T_1-\frac{\varepsilon}{2}} \partial_t (v^\varepsilon(x,t)-u^\varepsilon(x,t))\,(v^\varepsilon(x,t)-u^\varepsilon(x,t))_+\,dt\,dx\\
&=\int_{\Omega}\int_{T_0+\frac{\varepsilon}{2}}^{T_1-\frac{\varepsilon}{2}} \partial_t \frac{(v^\varepsilon(x,t)-u^\varepsilon(x,t))^2_+}{2}\,dt\,dx\\
&=\frac{1}{2}\, \int_{\Omega} \left(v^\varepsilon\left(x,T_1-\frac{\varepsilon}{2}\right)-u^\varepsilon\left(x,T_1-\frac{\varepsilon}{2}\right)\right)^2_+\,dx\\
&-\frac{1}{2}\,\int_{B_2} \left(v^\varepsilon\left(x,T_0+\frac{\varepsilon}{2}\right)-u^\varepsilon\left(x,T_0+\frac{\varepsilon}{2}\right)\right)^2_+\,dx.
\end{split}
\]
As before, the terms
\[
\begin{split}
&\int_{\Omega} \left[(v(x,T_0)-u(x,T_0))\,\phi(x,T_0)-\left(v^\varepsilon\left(x,T_0+\frac{\varepsilon}{2}\right)-u^\varepsilon\left(x,T_0+\frac{\varepsilon}{2}\right)\right)\,\phi\left(x,T_0+\frac{\varepsilon}{2}\right)\right]\,dx \\
&+\int_{\Omega} \left[\left(v^\varepsilon\left(x,T_1-\frac{\varepsilon}{2}\right)-u^\varepsilon\left(x,T_1-\frac{\varepsilon}{2}\right)\right)\,\phi\left(x,T_1-\frac{\varepsilon}{2}\right)-(v(x,T_1)-u(x,T_1))\,\phi(x,T_1)\,dx\right]\,dx,
\end{split}
\]
go to zero, as $\varepsilon$ goes to $0$. Therefore, by taking the limit as $\varepsilon$ goes to $0$, we arrive at
\[
\begin{split}
\int_{T_0}^{T_1} \iint_{\mathbb{R}^N\times\mathbb{R}^N} &\Big(J_p(v(x,t)-v(y,t))-J_p(u(x,t)-u(y,t))\Big)\,\Big((v(x,t)-u(x,t))_+-(v(y,t)-u(y,t))_+\Big)\,d\mu(x,y)\,dt\\
&+\frac{1}{2}\, \int_{\Omega} \left(v\left(x,T_1\right)-u\left(x,T_1\right)\right)^2_+\,dx-\frac{1}{2}\,\int_{B_2} \left(v\left(x,T_0\right)-u\left(x,T_0\right)\right)^2_+\,dx\leq  0.
\end{split}
\]
By \cite[Lemma A.3]{bralinschi}, we have
\[
\begin{split}
\Big(J_p(a-b)-J_p(c-d)\Big)\,\big((a-c)_+-(b-d)_+\big)&\ge  C\,|a-b-(c-d)|^{p-1}\,|(a-c)_+-(b-d)_+|\\
&\geq C\,|(a-c)_+-(b-d)_+|^p,
\end{split}
\]
for some $C=C(p)>0$. Then
\[
C\int_{T_0}^{T_1} \big[(v-u)_+\big]_{W^{s,p}(\mathbb{R}^N)}^p\,dt\le \frac{1}{2}\,\int_{\Omega} \left(v\left(x,T_0\right)-u\left(x,T_0\right)\right)^2_+\,dx-\frac{1}{2}\, \int_{\Omega} \left(u\left(x,T_1\right)-u\left(x,T_1\right)\right)^2_+\,dx,
\] for every $t_0<T_0<T_1<t_1$.
We can now let $T_0$ converge to $t_0$ and obtain
\[
\begin{split}
0\le C \int_{t_0}^{T_1} \big[(v-u)_+\big]_{W^{s,p}(\mathbb{R}^N)}^p\,dt\le-\frac{1}{2}\, \int_{\Omega} \left(v\left(x,T_1\right)-u\left(x,T_1\right)\right)^2_+\,dx.
\end{split}
\]
This implies  
\[
u(x,T_1)\ge v(x,T_1),\qquad \mbox{ for a.\,e. }x\in\Omega.
\]
Since $T_1$ is arbitrary, this entails the desired result.
\end{proof}

\section{Some complements to the proof of Lemma \ref{Mlemreg}}
\label{sec:lemma33}

We keep on using the same notation of Lemma \ref{Mlemreg}. For every $0<|h|<h_0/4$ and $0<\varepsilon<\varepsilon_0$, we set
\[
\begin{split}
\mathcal{A}_\varepsilon&:=\int_{T_0}^{T_1}\iint_{\mathbb{R}^N\times\mathbb{R}^N}{\Big(J_p(u_h(x,t)-u_h(y,t))-J_p(u(x,t)-u(y,t))\Big)}\\
&\times\left(\Big(F(\delta_h u^\varepsilon(x,t))\,\tau_\varepsilon(t)\Big)^\varepsilon\,\eta(x)^p-\Big( F(\delta_h u^\varepsilon(y,t))\,\tau_\varepsilon(t)\Big)^\varepsilon\,\eta(y)^p\right)\,d\mu\, dt,
\end{split}
\]
and
\[
\begin{split}
\mathcal{A}&:=\int_{T_0}^{T_1}\iint_{\mathbb{R}^N\times\mathbb{R}^N}{\Big(J_p(u_h(x,t)-u_h(y,t))-J_p(u(x,t)-u(y,t))\Big)}\\
&\times\left(\Big(F(\delta_h u(x,t))\,\tau(t)\Big)\,\eta(x)^p-\Big( F(\delta_h u(y,t))\,\tau(t)\Big)\,\eta(y)^p\right)\,d\mu\, dt.
\end{split}
\]
Then
\[
\begin{split}
|\mathcal{A}_\varepsilon-\mathcal{A}|&= \left|\int_{T_0}^{T_1}\iint_{\mathbb{R}^N\times\mathbb{R}^N}\Big( J_p(u_h(x,t)-u_h(y,t))-J_p(u(x,t)-u(y,t))\Big)\right.\\
&\times\left(\Big(\Big(F(\delta_h u^\varepsilon(x,t))\,\tau_\varepsilon(t)\Big)^\varepsilon-F(\delta_h u(x,t))\,\tau(t)\Big)\,\eta(x)^p\right.\\
&-\left.\left.\left(\Big( F(\delta_h u^\varepsilon(y,t))\,\tau_\varepsilon(t)\Big)^\varepsilon- F(\delta_h u(y,t))\,\tau(t)\right)\,\eta(y)^p\right)\,d\mu\,dt\right|.
\end{split}
\]
We need to show that 
\[
\lim_{\varepsilon\to 0}|\mathcal{A}_\varepsilon-\mathcal{A}|=0.
\]
We start by splitting the integral as follows 
\[
\begin{split}
\int_{T_0}^{T_1}&\iint_{\mathbb{R}^N\times\mathbb{R}^N}\Big( J_p(u_h(x,t)-u_h(y,t))-J_p(u(x,t)-u(y,t))\Big)\\
&\times\left(\Big(\Big(F(\delta_h u^\varepsilon(x,t))\,\tau_\varepsilon(t)\Big)^\varepsilon-F(\delta_h u(x,t))\,\tau(t)\Big)\,\eta(x)^p\right.\\
&-\left.\left(\Big( F(\delta_h u^\varepsilon(y,t))\,\tau_\varepsilon(t)\Big)^\varepsilon- F(\delta_h u(y,t))\,\tau(t)\right)\,\eta(y)^p\right)\,d\mu\,dt\\
&=\int_{T_0}^{T_1}\iint_{B_{2-2\,h}\times B_{2-2\,h}}\Big( J_p(u_h(x,t)-u_h(y,t))-J_p(u(x,t)-u(y,t))\Big)\\
&\times\left(\Big(\Big(F(\delta_h u^\varepsilon(x,t))\,\tau_\varepsilon(t)\Big)^\varepsilon-F(\delta_h u(x,t))\,\tau(t)\Big)\,\eta(x)^p\right.\\
&-\left.\left(\Big( F(\delta_h u^\varepsilon(y,t))\,\tau_\varepsilon(t)\Big)^\varepsilon- F(\delta_h u(y,t))\,\tau(t)\right)\,\eta(y)^p\right)\,d\mu\,dt\\
&+2\,\int_{T_0}^{T_1}\iint_{B_{2-2\,h}\times(\mathbb{R}^N\setminus B_{2-h})}\Big( J_p(u_h(x,t)-u_h(y,t))-J_p(u(x,t)-u(y,t))\Big)\\
&\times\Big(\Big(F(\delta_h u^\varepsilon(x,t))\,\tau_\varepsilon(t)\Big)^\varepsilon-F(\delta_h u(x,t))\,\tau(t)\Big)\,\eta(x)^p\,d\mu\,dt=\Theta_1(\varepsilon)+\Theta_2(\varepsilon).
\end{split}
\]
We now observe that
\[
\begin{split}
\int_{T_0}^{T_1}&\Big[\Big(F(\delta_h u^\varepsilon(\cdot,t))\,\tau_\varepsilon(t)\Big)^\varepsilon\,\eta^p\Big]^p_{W^{s,p}(B_{2-2\,h})}\,dt\\
&\le C\,\|\eta\|_{L^\infty}^{p^2}\,\int_{T_0}^{T_1}\Big[\Big(F(\delta_h u^\varepsilon(\cdot,t))\,\tau_\varepsilon(t)\Big)^\varepsilon\,\Big]^p_{W^{s,p}(B_{2-2\,h})}\,dt\\
&+C\,\|\nabla \eta\|_{L^\infty}^p\,\|\eta\|_{L^\infty}^{p\,(p-1)}\,\int_{T_0}^{T_1}\Big\|\Big(F(\delta_h u^\varepsilon(\cdot,t))\,\tau_\varepsilon(t)\Big)^\varepsilon\,\Big\|^p_{L^p(B_{2-2\,h})}\,dt\le C,
\end{split}
\]
where we used the properties of convolutions, the fact that $F$ is locally Lipschitz and the uniform $L^\infty$ bound \eqref{unifaus}. Thus, up to extracting a subsequence, we can infer weak convergence in
\[
L^{p}([T_0,T_1];W^{s,p}(B_{2-2\,h})),
\] 
of 
\[
\Big(F(\delta_h u^\varepsilon(x,t))\,\tau_\varepsilon(t)\Big)^\varepsilon\,\eta^p,
\]
to the function
\[
F(\delta_h u(x,t))\,\tau(t)\,\eta^p.
\]
By definition, this is the same as saying that the function
\[
\frac{\Big(F(\delta_h u^\varepsilon(x,t))\,\tau_\varepsilon(t)\Big)^\varepsilon\,\eta(x)^p-\Big(F(\delta_h u^\varepsilon(y,t))\,\tau_\varepsilon(t)\Big)^\varepsilon\,\eta(y)^p}{|x-y|^{\frac{N}{p}+s}},
\]
weakly converges in $L^p([T_0,T_1];L^p(B_{2-2\,h}\times B_{2-2\,h}))$. This permits to conclude that
\[
\lim_{\varepsilon\to 0} \Theta_1(\varepsilon)=0,
\]
thanks to the fact that
\[
\frac{J_p(u_h(x,t)-u_h(y,t))-J_p(u(x,t)-u(y,t))}{|x-y|^{\frac{N}{p'}+s\,(p-1)}},
\]
belongs to $L^{p'}([T_0,T_1];L^{p'}(B_{2-2\,h}\times B_{2-2\,h}))$.
\par
For $\Theta_2(\varepsilon)$ we use a similar argument. More precisely, we observe that if we set
\[
\mathfrak{F}(x,t)=\int_{\mathbb{R}^N\setminus B_{2-h}} \frac{J_p(u_h(x,t)-u_h(y,t))}{|x-y|^{N+s\,p}}\,dy,\qquad \mbox{ for a.\,e. }x\in B_{2-2\,h},\, t\in [T_0,T_1],
\]
we have
\[
\begin{split}
|\mathfrak{F}(x,t)|&\le C_h\,\int_{\mathbb{R}^N\setminus B_{2-h}} \frac{|u_h(x,t)|^{p-1}+|u_h(y,t)|^{p-1}}{1+|y|^{N+s\,p}}\,dy\\
&\le C_h\,\left(|u_h(x,t)|^{p-1}+\|\delta_h u(\cdot,t)\|_{L^{p-1}_{s\,p}(\mathbb{R}^N)}^{p-1}\right).
\end{split}
\]
By using the definition of local weak solution, this implies that $\mathfrak{F}\in L^1([T_0,T_1]\times B_{2-2\,h})$. On the other hand, for $t\in [T_0,T_1]$ and $x\in B_{2-2\,h}$ we have
\[
\begin{split}
\Big|\Big(F(\delta_h u^\varepsilon(x,t))\,\tau_\varepsilon(t)\Big)^\varepsilon\,\eta(x)^p\Big|&\le \|\eta\|_{L^\infty}^p\,\int_{-\frac{1}{2}}^\frac{1}{2} \zeta(\sigma)\,|F(\delta_h u^\varepsilon(x,t-\varepsilon\,\sigma))|\,d\sigma\\
&\le C\,\|\eta\|_{L^\infty}^p\,\int_{-\frac{1}{2}}^\frac{1}{2}\zeta(\sigma)\, |\delta_h u^{\varepsilon}(x,t-\varepsilon\,\sigma)|\,d\sigma\\
&\le C\,\|\eta\|_{L^\infty}^p \|\delta_h u^\varepsilon\|_{L^\infty\left(\left[T_0-\frac{\varepsilon}{2},T_1+\frac{\varepsilon}{2}\right]\times B_{2-2\,h}\right)}.
\end{split}
\]
By recalling \eqref{unifaus},
this implies that
\[
\Big(F(\delta_h u^\varepsilon(x,t))\,\tau_\varepsilon(t)\Big)^\varepsilon,
\]
is uniformly bounded in $L^\infty([T_0,T_1]\times B_{2-2\,h})$. The last two facts implies that 
\[
\begin{split}
\lim_{\varepsilon\to 0}\int_{T_0}^{T_1}&\iint_{B_{2-2\,h}\times(\mathbb{R}^N\setminus B_{2-h})}\Big( J_p(u_h(x,t)-u_h(y,t))\Big)\,\Big(F(\delta_h u^\varepsilon(x,t))\,\tau_\varepsilon(t)\Big)^\varepsilon\,\eta(x)^p\,d\mu\,dt\\
&=\int_{T_0}^{T_1}\iint_{B_{2-2\,h}\times(\mathbb{R}^N\setminus B_{2-h})}\Big(J_p(u_h(x,t)-u_h(y,t))\Big)\,F(\delta_h u(x,t))\,\eta(x)^p\,d\mu\,dt,
\end{split}
\]
up to extracting a subsequence. In the exact same way, we can show that 
\[
\begin{split}
\lim_{\varepsilon\to 0}\int_{T_0}^{T_1}&\iint_{B_{2-2\,h}\times(\mathbb{R}^N\setminus B_{2-h})}\Big( J_p(u(x,t)-u(y,t))\Big)\,\Big(F(\delta_h u^\varepsilon(x,t))\,\tau_\varepsilon(t)\Big)^\varepsilon\,\eta(x)^p\,d\mu\,dt\\
&=\int_{T_0}^{T_1}\iint_{B_{2-2\,h}\times(\mathbb{R}^N\setminus B_{2-h})}\Big(J_p(u(x,t)-u(y,t))\Big)\,F(\delta_h u(x,t))\,\eta(x)^p\,d\mu\,dt.
\end{split}
\]
This in turn permits to infer that $\Theta_2(\varepsilon)$ goes to $0$, as well. This concludes the proof of Lemma \ref{Mlemreg}.

\medskip

\end{document}